\def\R{\mathbb{R}}
\newtheorem{Theo}{Theorem}
\newtheorem{Rem}{Remark}
\numberwithin{equation}{section}
\numberwithin{figure}{section}
\numberwithin{table}{section}
\numberwithin{Lem}{section}
\numberwithin{Defi}{section}
\numberwithin{Theo}{section}
\numberwithin{Rem}{section}
 \numberwithin{Coro}{section}
\newcommand{\ds}{\,\mathrm{d}s}
\newcommand{\dx}{\,\mathrm{d}x}
\renewcommand{\i}{\mathrm{i}}
\def\dy{\,\mathrm{d}y}
\def\m{\mathbf{m}}
\def\n{\mathbf{n}}
\def\L{\mathcal{L}}
\title{\bf Perfectly Matched Layers for nonlocal Helmholtz equations II: multi-dimensional cases \thanks{This work is supported in NSFC under grants No. 12071401, 11771035, Natural Science Foundation of Hunan Province No. 2019JJ50572, Natural Science Foundation of Hubei Province No. 2019CFA007 and Xiangtan University 2018ICIP01. }}
\author{
 Yu Du\thanks{Department of Mathematics, Xiangtan University, Hunan, 411105, China({\tt duyu@xtu.edu.cn})}
\and Jiwei Zhang\thanks{School of Mathematics and Statistics, and Hubei Key Laboratory of Computational Science, Wuhan University, Wuhan 430072, China.
({\tt jiweizhang@whu.edu.cn})}
}
\date{}
\begin{document}

\maketitle

\begin{abstract}
Perfectly matched layers (PMLs) are formulated and  applied to numerically solve nonlocal Helmholtz equations in one and two dimensions. In one dimension, we present the PML modifications for the nonlocal Helmholtz equation with general kernels and theoretically show its effectiveness in some sense. In two dimensions, we give the PML modifications in both Cartesian coordinates and polar coordinates. Based on the PML modifications, nonlocal Helmholtz equations are truncated in one and two dimensional spaces, and asymptotic compatibility schemes are introduced to discretize the resulting truncated problems. Finally, numerical examples are provided to study the ``numerical reflections'' by PMLs and demonstrate the effectiveness and validation of our nonlocal PML strategy.

\vskip 5pt \noindent {\bf Keywords:} {nonlocal Helmholtz equation, nonlocal models, perfectly matched layers, artificial boundary condition, asymptotic compatibility scheme}


\end{abstract}

\section{Introduction}\label{sec_intro}
In this paper, we consider the computation of the following nonlocal Helmholtz equation
 \begin{align}
\mathcal{L} u(x) - k^2u(x) = f(x), \quad x\in\R^d, \ d=1,2,\label{eq:nonlocalHelmholtz}
\end{align}
where $f(x)\in L^2(\R)$ has a bounded support $\Omega_f$, and $k$ is a constant related to the traditional \emph{wavenumber} for the local Helmholtz equation and the nonlocal operator $\L$ is defined by
\begin{align}
	\mathcal{L} u(x) = \int_{\R^d} \big( u(x)-u(y) \big) \gamma(x,y) \mathrm{d}y.	\label{eq:nonlocalOperator}
\end{align}
Here the kernel function $\gamma(x,y)$ satisfies
\begin{align}
\gamma(x,y)\geq0\quad \mathrm{and}\quad \gamma(x,y)=\gamma(y,x). \label{eq:symkernel}
\end{align}


Note that the nonlocal Helmholtz equation shown above is defined in the whole space $\R^d$, but the far field boundary condition is not provided. The original PML technique is proposed for the standard PDEs \cite{Berenger1994} with some far field boundary conditions, such as the local Helmholtz equation and the Maxwell's equation. A typical example is the Sommerfeld radiation condition for the local Helmholtz equation in a homogeneous medium. However, the definition of the far field boundary condition for the nonlocal Helmholtz equation still remains open. Therefore, in this paper we simply assume a suitable boundary condition at infinity is imposed to exclude energy incoming from infinity and only to allow energy outgoing to infinity. 

While most of works are carried out for the simulation of nonlocal problems with free or fixed boundary conditions on bounded domains \cite{DUGunLeZhou,TianDu,TianDu2,ZhouDu,macek2007peridynamics,chen2011continuous,emmrich2007analysis}, there are applications where the simulation in an infinite medium might be more reasonable, such as wave or crack propagation in the whole space. 
However, the unboundedness of the spatial domain presents a new computational challenge since the traditional grid-based methods such as finite element and finite difference methods generally lead to an infinite number of degrees of freedom, which cannot be solved numerically. To overcome the difficulty of unboundedness of spatial domains, one successful approach is the artificial boundary method (ABM), see monograph \cite{han2013artificial}. The main idea of ABM is to introduce an artificial boundary to limit a finite domain of interest, and then design suitable absorbing boundary conditions (ABCs) to absorb the impinging wave at artificial boundaries. Recently, much progress has been made for nonlocal problems \cite{DuHanZhangZheng, ZYZD16, ZhengHuDuZhang, wildman2012a}, including the nonlocal PMLs for peridynamics. PML was originally proposed by Ber\' enger \cite{Berenger1994} and has been a popular and effective method for solving wave scattering problems \cite{antoine2019towards,berenger1996three,collino1998the,ls98,cw03,Chen2005,bw05,li2018fem,bramble2006analysis,hohage2003solving,jiang2017an,jiang2018convergence,duan2020exponential,zhou2018an,jiang2019an}. 

There have been some PMLs for nonlocal problems \cite{antoine2019towards,DuZhangNonlocal1,wildman2012a}.  Wildman and Gazonas \cite{wildman2012a} presented PMLs for peridynamics by treating the nonlocal kernel as the convolution of the displacement with the second derivative of a nascent Dirac delta distribution and making the substitution for the nonlocal divergence and its adjoint. Antoine and Lorin \cite{antoine2019towards} proposed PMLs for time-dependent space fractional PDEs by replacing the fractional derivative operators with the corresponding new complex-valued ones. Du and Zhang \cite{DuZhangNonlocal1} proposed a new PML for the one-dimensional nonlocal Helmholtz equation with radial kernel and some special \emph{wavenumber} $k$. The main idea of deriving PML in \cite{DuZhangNonlocal1} is to rewrite the nonlocal equation into its weak form, and analytically continue the nonlocal solution and equation of integral form to complex $\tilde x,\tilde y$ contours. After that, coordinate transformations are performed to express the complexes $\tilde x, \tilde y$ as functions of the corresponding real coordinates. In the new coordinates, the nonlocal PML equation are presented with real coordinates and complex materials. After performing the PML transformation of the nonlocal Helmholtz equation, the solution is unchanged in the region of interest (small $x$) and exponentially decaying in the outer region (large $x$).

The goal of this paper is to construct PMLs as ABCs for nonlocal Helmholtz equations with more general kernels and \emph{wavenumber} $k$ in multi-dimensions. To do so, we extend the strategy of constructing PML developed in \cite{DuZhangNonlocal1} to obtain a nonlocal PML equation directly from the weak form of the nonlocal Helmholtz equation. Comparing with the results in \cite{DuZhangNonlocal1}, we here consider the construction of PMLs for more general kernels and \emph{wavenumber} $k$, and present the general form of nonlocal PML modifications for 2D case. Specially, two kinds of nonlocal PML modifications for 2D nonlocal equations are introduced: one is to stretch the Cartesian coordinate and another is to stretch the radial coordinate in polar coordinates. In addition, the properties of the nonlocal PML solution in 1D are studied for a more general analytic continuation. To do so, we introduce a weighted average value $u^a$ of the nonlocal Helmholtz solution defined by \eqref{eq:barqdef} for radial kernels. Our theorem \ref{theo31} shows that the analytic continuation of $u^a$ in complex plane decays exponentially in PML layers, which depends on the \emph{wavenumber} $k$, the kernel $\gamma$ and the PML coefficients. Particularly, a refined estimate of the PML solution is further established based on the exact formula of the corresponding Green's function for a typical kernel $\gamma(x,y) = \gamma_r(x-y)$ defined as
\begin{equation} \label{k1}
\gamma_r(x-y)=\frac{1}{2c_\gamma^3} e^{-\frac{|x-y|}{c_\gamma}},
\end{equation}
where $c_\gamma$ is a positive constant. These estimates theoretically show that our PMLs are efficient for the one-dimensional equations all \emph{wavenumber} $k$ in some sense, i.e., these PMLs can be applied directly for all $k$, which is an improvement of our previous work \cite{DuZhangNonlocal1}.


The outline of this paper is organized as follows. In Section~\ref{sec:NPML}, nonlocal PML equations are derived by using complex coordinate transforms. In Section~\ref{sec:EOPT1}, the efficiency of our PMLs in 1D is theoretically studied for some radical kernels. In Section~\ref{sec:NSPE}, the nonlocal computational region is truncated by putting homogeneous Dirichlet boundary conditions, and AC schemes are introduced for numerically solving the resulting truncated problems. In Section~\ref{sec:ne}, numerical examples in 1D and 2D are provided to study the ``numerical reflections'' caused by PML modifications and demonstrate the effectiveness of our PMLs.

\section{Nonlocal Perfectly Matched Layers} \label{sec:NPML}
The nonlocal PML has been studied in \cite{DuZhangNonlocal1} for one-dimensional case with the radial kernels, i.e., $\gamma(x,y)=\gamma_r(x-y)$. Here we extend the idea to derive nonlocal PMLs for both 1D and 2D cases with the general kernel $\gamma(x,y)$ and a more general analytical continuation.

\subsection{The nonlocal PML in one dimension}

Let $\Omega=[-l,l]\ (l>0)$ be a bounded domain including $\Omega_f$, i.e., $\Omega_f\subset\Omega$ (see Figure~\ref{fig:domainPMLa}). Define the following analytic continuation to the complex plane by
\begin{align}
	\tilde x:=\int_0^x \alpha(t)\mathrm{d}t= \int_0^x \Big(1 + \frac{z}{k}\sigma(t)\Big) \mathrm{d}t,\qquad \tilde y:=\int_0^y \alpha(t)\mathrm{d}t= \int_0^y \Big(1 + \frac{z}{k}\sigma(t)\Big) \mathrm{d}t. \label{eq:complexStreching} 
\end{align}
Here the absorption function $\sigma(t)=0$ in $\Omega$, and is positive inside the PMLs, i.e., $\sigma(t)>0$ in $\R\setminus\Omega$. The PML coefficient $z$ is complex such as taking $\i$ or $1+\i$. This mapping has the effect of transforming traveling waves of the form $e^{\i kx}$ into evanescent waves of the form $e^{ik\Re\tilde x}e^{-k\Im \tilde x}$ when the imaginary part of $z$ is positive. In \cite{DuZhangNonlocal1}, we chose $z$ as a pure imaginary number and have shown the efficiency of the variable changes for the nonlocal Helmholtz equation with certain \emph{wavenumber} $k$. Here we extend the range of $z$ to a complex number in the plane $\Im z>0$, and will show that this change is efficient for all \emph{wavenumber} $k$ in the Theorem \eqref{theo31} below. The following derivation of nonlocal PML modifications is using the same methodology as that in \cite{DuZhangNonlocal1}, and can be extended to the two dimensional PMLs naturally. To under the philosophy of the method in \cite{DuZhangNonlocal1}, let us rewrite Eq.~\eqref{eq:nonlocalHelmholtz} in the weak form
\begin{align}
	(\mathcal{L} u,v) - k^2(u,v) = (f,v),\quad \forall v\in C_0^\infty(\R), \label{eq:weakform}
\end{align}
where $(\cdot,\cdot)$ denotes the inner product in the complex valued $L^2$-space, and 
\begin{align}
	(\mathcal{L} u,v) = \frac12 \int_\R \int_\R \big( u(x)-u(y) \big) \big( \bar v(x)-\bar v(y) \big) \gamma(x,y) \mathrm{d}y\mathrm{d}x.\label{eq:weakcall}
\end{align}
Here $\bar{v}(x)$ represents the complex conjugate of $v(x)$. By replacing
\begin{align*}
	&x\to\tilde x,\ y\to\tilde y,\ \mathrm{and}\ \mathrm{d}x \rightarrow \frac{\partial \tilde x}{\partial x}\dx=\alpha(x)\mathrm{d}x,\ \mathrm{d}y\rightarrow \frac{\partial \tilde y}{\partial y}\dy=\alpha(y)\mathrm{d}y,
\end{align*}
the weak form \eqref{eq:weakcall} can be extended to the following weak form with PML modifications
\begin{align}
	 & \frac12 \int_\R \int_\R \big( \tilde u(x)-\tilde u(y) \big) \big( \tilde{\bar v}(x) - \tilde{\bar v}(y) \big) \gamma(\tilde x,\tilde y) \alpha(x)\alpha(y)\mathrm{d}y\mathrm{d}x \notag\\
	 &\quad\quad\quad\quad\quad\quad\quad\quad\quad\quad\quad\quad- k^2\int_\R \tilde u(x)\tilde{\bar v}(x)\alpha(x)\mathrm{d}x = \int_\R f(x)\tilde{\bar v}(x)\alpha(x)\mathrm{d}x, \label{eq:PML} 
\end{align}
where $\tilde u(x):=u(\tilde x)$, $\tilde u(y):=u(\tilde y)$, $\tilde{\bar v}(x):=\bar v(\tilde x)$ and $\tilde{\bar v}(y):=\bar v(\tilde y)$.
Using the facts that $\alpha(x)=1,\ \tilde x=x,\ \forall x\in \Omega$ and $\mathrm{supp} f(x)\subset \Omega$, we have the strong form of \eqref{eq:PML}
\begin{align} 
	\tilde{\mathcal{L}}\tilde u(x) - k^2\alpha(x)\tilde u(x) = f(x), \label{eq:PMLeq}
\end{align}
where the nonlocal operator with PML modifications is given by
\begin{align}
	\tilde{\mathcal{L}}\tilde u(x) = \int_\R \big( \tilde u(x)-\tilde u(y) \big) \gamma(\tilde x,\tilde y) \alpha(x)\alpha(y)\mathrm{d}y. \label{eq:nonlocalPMLoperator}
\end{align}
We point out that the new kernel $\gamma(\tilde x,\tilde y) \alpha(x)\alpha(y)$ in above PML modifications is still symmetric but complex comparing with \ref{eq:symkernel}.

\begin{figure}[htbp]
\centering
\begin{subfigure}{.32\textwidth}
 \centering
 \includegraphics[width=\textwidth]{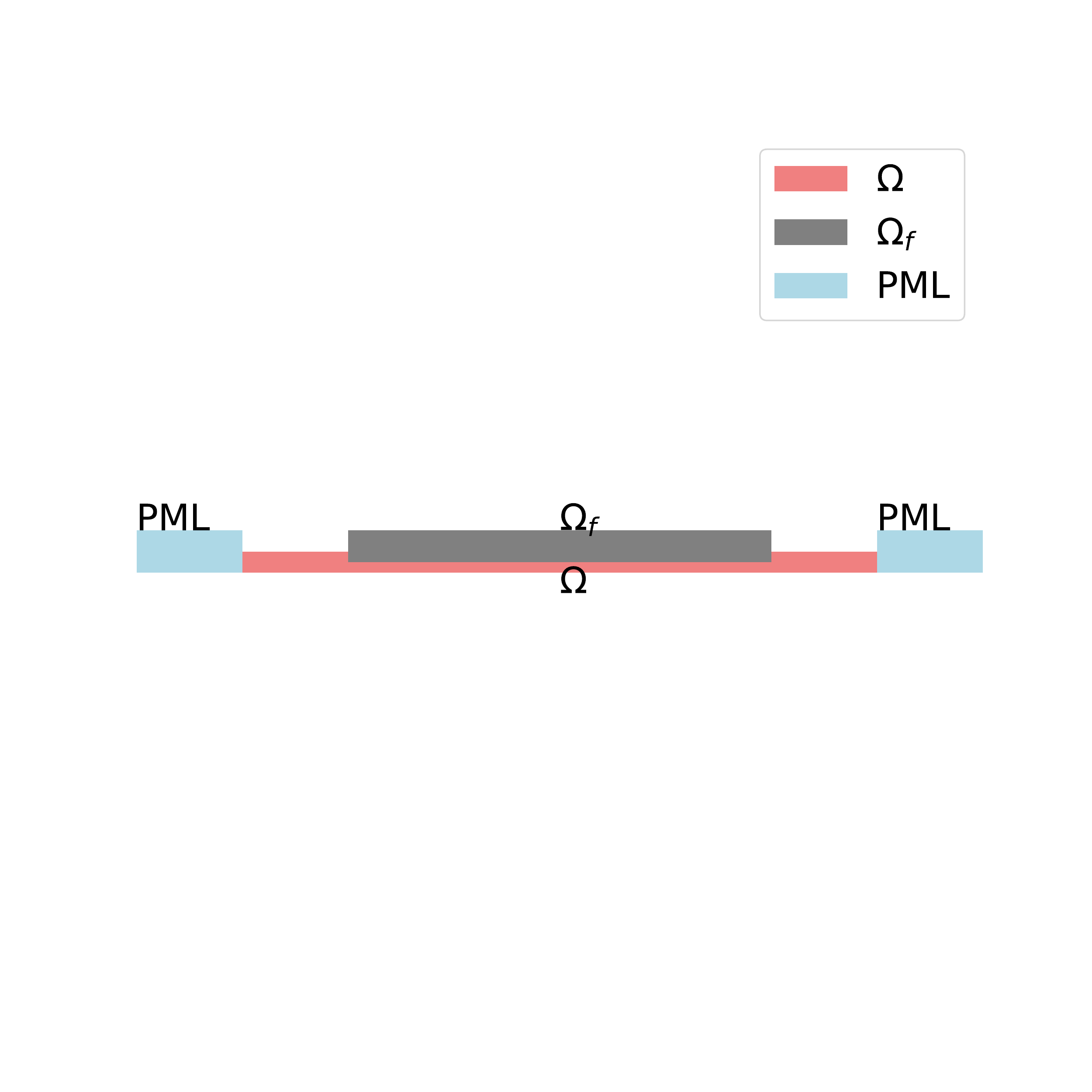} 
 \caption{1D}
 \label{fig:domainPMLa}
\end{subfigure}
\begin{subfigure}{.32\textwidth}
 \centering
 \includegraphics[width=\textwidth]{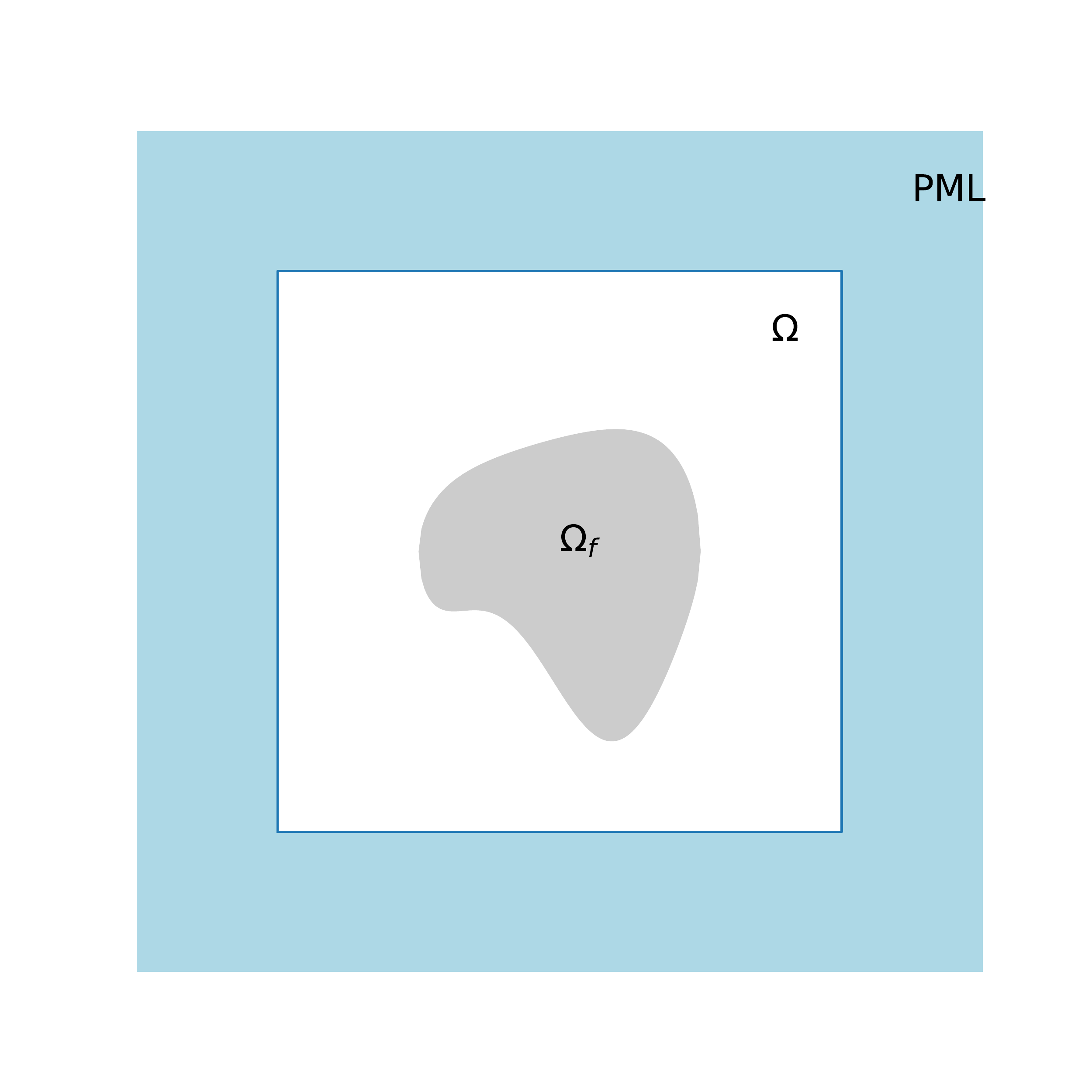} 
 \caption{2D}
 \label{fig:domainPMLb}
\end{subfigure}
\begin{subfigure}{.32\textwidth}
 \centering
 \includegraphics[width=\textwidth]{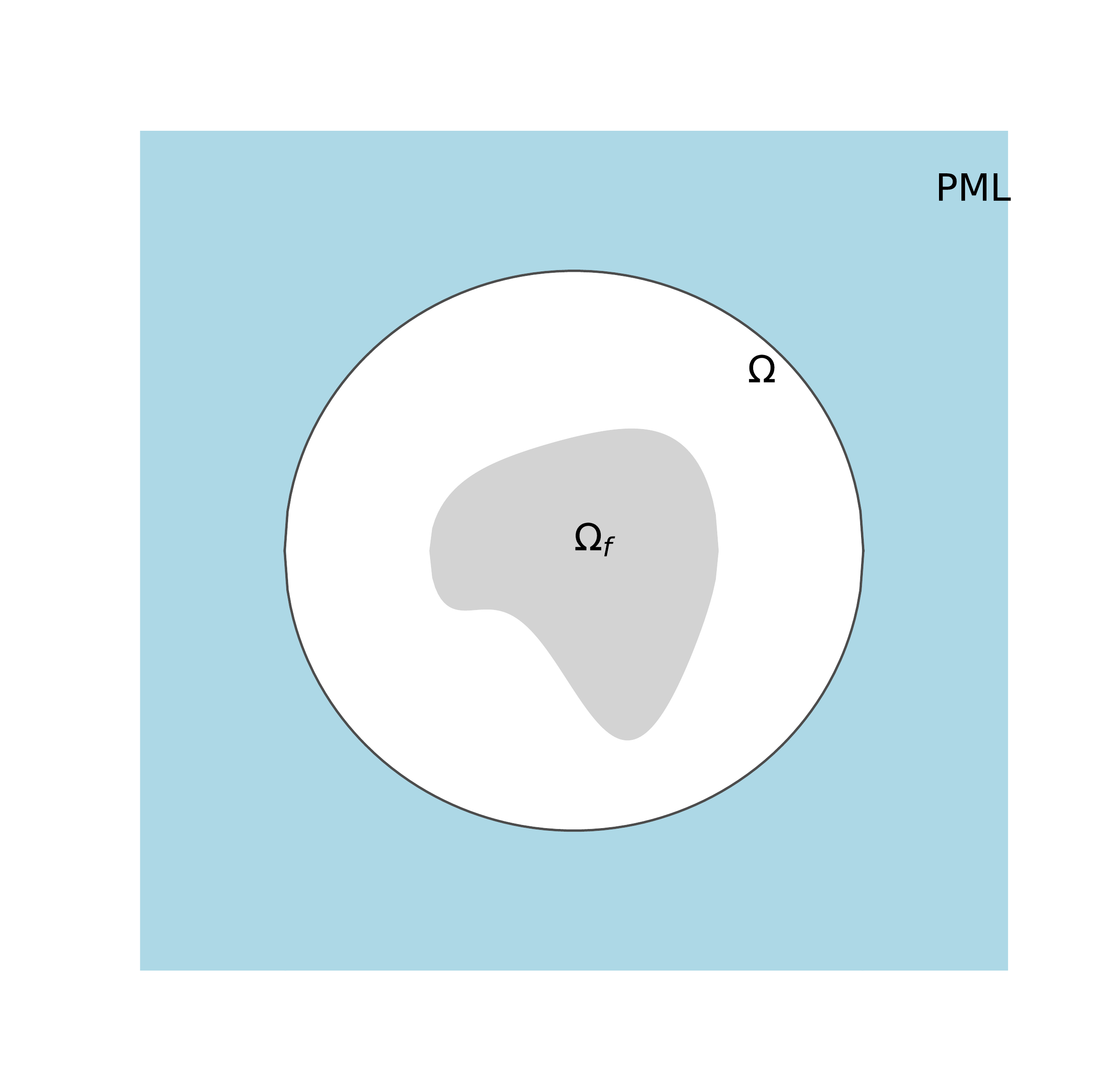} 
 \caption{2D}
 \label{fig:domainPMLc}
\end{subfigure}
	\caption{$\Omega_f$, $\Omega$ and the PML ``absorbing'' regions. In the ``absorbing'' regions, the PML absorption coefficients $\sigma$ for 1D case, $\sigma_j \;(j=1,2)$ for Cartesian coordinates and $\sigma_r$ for polar coordinates in 2D) are positive. In $\Omega$, these PML absorption coefficients are zero.} \label{fig:domainPML}
\end{figure}

\subsection{Nonlocal PMLs in two dimensions}
We now consider the PMLs for two-dimensional nonlocal equations. Let $\Omega$ be a bounded domain such as the square $\Omega=[-l,l]^2$ and the circle $\Omega=\{x\in\R^2:|x|\leq l_r\}$ (Figures~\ref{fig:domainPMLb} and \ref{fig:domainPMLc}).

Denote $\tilde x=(\tilde x_1(x),\tilde x_2(x))$ and $\tilde y=(\tilde y_1(y),\tilde y_2(y))$ by the variable changes of $x=(x_1,x_2)$ and $y=(y_1,y_2)$, respectively, which satisfy $\tilde x=x,\tilde y=y\ \forall x, y\in \Omega$ and are complex in the PML layers $\R^2\setminus \Omega$. Here we consider two kinds of variable changes: one is obtained by stretching Cartesian coordinates and the other is by stretching the radial coordinate in polar coordinates.

The weak form of Eq.~\eqref{eq:nonlocalHelmholtz} in two dimensions is given as 
\begin{align} \label{wf2D}
\frac12 \int_{\R^2}\int_{\R^2} \big[u(x)-u(y)\big]\big[\bar v(x)-\bar v(y)\big]\gamma(x, y) \dy\dx - k^2 \int_{\R^2} u(x)\bar v(x) \dx = \int_{\R^2} f(x)\bar v(x) \dx.
\end{align}
By replacing
\begin{align*}
x\to \tilde x,\ 
y\to\tilde y,\ 
\dx\to\frac{\partial \tilde x}{\partial x}\dx=\begin{vmatrix}
\frac{\partial \tilde x_1}{\partial x_1} & \frac{\partial \tilde x_2}{\partial x_1}\\
\frac{\partial \tilde x_1}{\partial x_2} & \frac{\partial \tilde x_2}{\partial x_2}
\end{vmatrix}\dx,\ 
\dy\to\frac{\partial \tilde y}{\partial y}\dy=\begin{vmatrix}
\frac{\partial \tilde y_1}{\partial y_1} & \frac{\partial \tilde y_2}{\partial y_1}\\
\frac{\partial \tilde y_1}{\partial y_2} & \frac{\partial \tilde y_2}{\partial y_2}
\end{vmatrix}\dy,
\end{align*}
the weak form of \eqref{wf2D} is transformed to the following nonlocal weak form with PML modifications 
\begin{align}
\frac12 \int_{\R^2}\int_{\R^2} \big[u(\tilde x)-u(\tilde y)\big]\big[\bar v(\tilde x) &- \bar v(\tilde y)\big]\gamma(\tilde x,\tilde y) \frac{\partial\tilde x}{\partial x}\frac{\partial\tilde y}{\partial y}\dy\dx\notag \\
&- \int_{\R^2} k^2 u(\tilde x)\bar v(\tilde x) \frac{\partial\tilde x}{\partial x}\dx = \int_{\R^2} f(\tilde x)\bar v(\tilde x) \frac{\partial\tilde x}{\partial x}\dx.
\end{align}
The corresponding strong form of the nonlocal PML equation is given by
\begin{align}
\tilde{\mathcal{L}} \tilde u(x)- k^2\frac{\partial\tilde x}{\partial x} \tilde u(x)= f(x), \label{eq:nonlocalPML2Deq}
\end{align}
where
\begin{align}
\tilde{\mathcal{L}} \tilde u(x)=\int_{\R^2} \big[\tilde u(x)-\tilde u(y)\big]\gamma(\tilde x,\tilde y) \frac{\partial\tilde x}{\partial x}\frac{\partial\tilde y}{\partial y}\dy\label{eq:nonlocalPML2Dop}
\end{align}
with the fact that the right hand side is still $f(x)$ since $\tilde x=x$ in $\Omega$ and $\mathrm{supp}f\subset\Omega$.

Next we introduce the PMLs in Cartesian coordinates and polar coordinates, rescpectively.
\subsubsection{The PML in Cartesian coordinates}
We first introduce the PML obtained by stretching Cartesian coordinates. For simplicity, we choose $\Omega$ as a square centered at $(0,0)$, i.e., $\Omega=[-l,l]^2$, where $l$ is chosen such that $\Omega_f\subset\Omega$. The change of variables is given by
$$\tilde x=(\tilde x_1(x_1), \tilde x_2(x_2)),\quad\tilde y=(\tilde y_1(y_1),\tilde y_2(y_2)),$$
where
\begin{align}
\tilde x_j(x_j) =& \int_0^{x_j} \alpha_j(t)\mathrm dt = x_j + \frac{z_j}{k}\int_0^{x_j} \sigma_j(t)\mathrm dt,\\
\tilde y_j(y_j) =& \int_0^{y_j} \alpha_j(t)\mathrm dt = y_j + \frac{z_j}{k}\int_0^{y_j} \sigma_j(t)\mathrm dt.
\end{align}
Here $\sigma_j(t)=0 \; (j=1,2)$ in $[-l,l]$ and are positive in $(-\infty,-l)\cup(l,+\infty)$, and $z_j\; (j=1,2)$ are complex parameters.

By simple calculations we get
\begin{align}
\begin{pmatrix}
\frac{\partial \tilde x_1}{\partial x_1} & \frac{\partial \tilde x_2}{\partial x_1}\\
\frac{\partial \tilde x_1}{\partial x_2} & \frac{\partial \tilde x_2}{\partial x_2}
\end{pmatrix}=
\begin{pmatrix}
\alpha_1(x_1) & 0\\
0 & \alpha_2(x_2)
\end{pmatrix}
\Longrightarrow
\frac{\partial \tilde x}{\partial x} = \alpha_1(x_1)\alpha_2(x_2),
\end{align}
\begin{align}
\begin{pmatrix}
\frac{\partial \tilde y_1}{\partial y_1} & \frac{\partial \tilde y_2}{\partial y_1}\\
\frac{\partial \tilde y_1}{\partial y_2} & \frac{\partial \tilde y_2}{\partial y_2}
\end{pmatrix}=
\begin{pmatrix}
\alpha_1(y_1) & 0\\
0 & \alpha_2(y_2)
\end{pmatrix}
\Longrightarrow
\frac{\partial \tilde y}{\partial y} = \alpha_1(y_1)\alpha_2(y_2).
\end{align}
By \eqref{eq:nonlocalPML2Deq}--\eqref{eq:nonlocalPML2Dop} the nonlocal PML equation is given by
\begin{align}
\int_{\R^2} \big[\tilde u( x)-\tilde u(y)\big]\gamma(\tilde x,\tilde y)\alpha_1(x_1)\alpha_2(x_2)\alpha_1(y_1)\alpha_2(y_2)\dy - k^2 \alpha_1(x_1)\alpha_2(x_2) \tilde u( x) = f( x).\label{eq:carPML}
\end{align}

\subsubsection{The PML in polar coordinates}
Here we consider the nonlocal PML by stretching the radial coordinate with 
$$\Omega=\{(x,y):\sqrt{x^2+y^2}\leq l_r\},$$ where $l_r$ is a constant such that $\Omega_f\subset\Omega$ (see Figure~\ref{fig:domainPMLc}). Denote by $(r_x,\theta_x)$ and $(r_y,\theta_y)$ the polar coordinates of $x,y\in\R^2$, respectively. We define the change of variables $r_x$ and $r_y$ by
\begin{align}
\tilde r_x =& \int_0^{r_x} \alpha_r(t)\mathrm dt=r_x+\frac{z}{k}\int_0^{r_x} \sigma_r(t)\mathrm dt,\\
\tilde r_y =& \int_0^{r_ y} \alpha_r(t)\mathrm dt=r_y+\frac{z}{k}\int_0^{r_ y} \sigma_r(t)\mathrm dt.
\end{align}
Here $\sigma_r(t)=0$ in $[0,l_r]$ and is positive in $(l_r,+\infty)$, and $z$ is a complex parameter. Then the complex coordinate stretching is given by
\begin{align}
x_1\to\tilde x_1(x)=&\tilde r_x\cos\theta_x,\ x_2\to\tilde x_2(x)=\tilde r_x\sin\theta_x,\\
y_1\to\tilde y_1(y)=&\tilde r_y\cos\theta_y,\ y_2\to\tilde y_2(y)=\tilde r_y\sin\theta_y.
\end{align}
By a simple calculation we have 
\begin{align}
\begin{pmatrix}
\frac{\partial \tilde x_1}{\partial x_1} & \frac{\partial \tilde x_2}{\partial x_1}\\
\frac{\partial \tilde x_1}{\partial x_2} & \frac{\partial \tilde x_2}{\partial x_2}
\end{pmatrix} = \begin{pmatrix}
\cos\theta_x & -\sin\theta_x\\
\sin\theta_x & \cos\theta_x
\end{pmatrix}
\begin{pmatrix}
\alpha_r(r_x) & 0\\
0 &\beta(r_x)
\end{pmatrix}
\begin{pmatrix}
\cos\theta_x & \sin\theta_x\\
-\sin\theta_x & \cos\theta_x
\end{pmatrix},
\end{align}
where $\beta(r_x)=\tilde r_x/r_x$, which implies
\begin{align}
\frac{\partial \tilde x}{\partial x} = \alpha_r(r_x)\beta(r_x).
\end{align}
Similarly, we have
\begin{align}
\frac{\partial \tilde y}{\partial y} = \alpha_r(r_ y)\beta(r_ y).
\end{align}

By \eqref{eq:nonlocalPML2Deq}--\eqref{eq:nonlocalPML2Dop}, we get the nonlocal PML equation
\begin{align}
\int_{\R^2} \big[\tilde u( x)-\tilde u(y)\big]\gamma(\tilde x,\tilde y) \alpha_r(r_x)\beta(r_x) \alpha_r(r_ y)\beta(r_ y)\dy - k^2 \alpha_r(r_x)\beta(r_x)\tilde u( x) = f(x). \label{eq:polPML}
\end{align}


We point out the derivation of PMLs for higher dimensional nonlocal Helmholtz equations can be directly obtained by using the same idea as above, which is omitted here.

\section{The exponentially decaying waves} \label{sec:EOPT1}
We here show the efficiency of the PML technique for radial kernels in one dimension, i.e., $\gamma(x,y)=\gamma_r(x-y)$, satisfies
\begin{itemize}
\item nonnegativeness: $\gamma_r(s)\geq0$;
\item symmetry in $s$: $\gamma_r(s)=\gamma_r(-s)$;
\item finite horizon: $\gamma_r(s)=0$ for $|s|>\delta$;
\item the second moment condition: $\frac12\int_\R s^2\gamma_r(s)\ds=1$.
\end{itemize}
To the end, we introduce the following function, see \cite{DuZhangNonlocal1} 
\begin{align}
	G_{x_0}(x) = \begin{cases}
		C_1(x_0) e^{-\i\tilde kx}, & \quad x\leq x_0 \quad \text{with} \quad C_1(x_0)=- \frac{e^{\i \tilde k x_0}}{2\i \tilde k}, \\
		C_2(x_0) e^{\i\tilde kx}, & \quad x>x_0\quad \text{with} \quad C_2(x_0)=- \frac{e^{-\i \tilde k x_0}}{2\i \tilde k}, 
	\end{cases}
	\label{eq:green}
\end{align}
where $\tilde k$ which has the nonnegative imaginary part is the solution of the following identity 
\begin{align} 
	\int_\R \left(1-e^{\i\tilde k s}\right)\gamma_r(s)\ds = k^2.\label{eq:relation}
\end{align}
Set an average of $u$ as 
\begin{align}
	u^a(x) := \int_{\R} u(t+x)\kappa(t)\mathrm dt, \label{eq:barqdef}
\end{align}
where the weight $\kappa(t)$ is an even function given by 
\begin{align}
	\kappa(t) = -\frac{1}{\tilde k} \int_{t}^{+\infty} \sin \tilde k (t- s) \gamma_r(s) \ds,\quad \mathrm{for}\ t>0.
\end{align}
In addition, we set $u^{e}$ is the solution to \eqref{eq:nonlocalHelmholtz} with a typical kernel \eqref{k1}. 

In analogy to \cite{DuZhangNonlocal1}, the average $u^a$ and the particular solution $u^e$ are given respectively by
\begin{align}
&u^a(x) = \int_{\R} G_{x}(y)f(y)\mathrm dy, \label{eq:baruExp}\\
&u^e(x) = \int_\R \left(\frac{1}{\left(1-(c_\gamma k)^2\right)^2} G_{x}(y) + \frac{c_\gamma^2}{1-(c_\gamma k)^2} \mathcal{D}(y-x)\right) f(y)\mathrm{d}y.\label{eq:GF}
\end{align}
Here $\mathcal{D}(x-x_0)$ represents the Dirac delta function. In~\eqref{eq:GF}, we can directly calculate the relationship between $\tilde k$ and $k$ by $\tilde k=k\sqrt{1/(1-(c_\gamma k)^2)}$ from ~\eqref{eq:relation}.

Note that Eq.~\eqref{eq:GF} actually gives the Green's function of Eq. \eqref{eq:nonlocalHelmholtz} with the kernel \eqref{k1}, namely,
\begin{align*}
G_x^e(y)= \frac{1}{\left(1-(c_\gamma k)^2\right)^2} G_{x}(y) + \frac{c_\gamma^2}{1-(c_\gamma k)^2} \mathcal{D}(y-x).
\end{align*}
This nonlocal Green's function has the asymptotic property to the Green's function of the local Helmholtz equation with Sommerfeld radiation boundary condition \cite{ms10}. This is to say, the nonlocal Green's function will converge to the local Green's function as $c_\gamma\to0$.


We have the following results to show the PML is an efficient technique in 1D for some $z$.
\begin{Theo} \label{theo31}
Denote $\Omega=[-l,l]$ and $\eta(x)=\left|\int_0^x\sigma(t)\mathrm{d}t\right|$. Let $z=z_1+\i z_2$ $(z_1\geq0,z_2>0)$. When ${z_1}/{z_2}$ is large enough, the PML solutions $u^a(\tilde x)$ and $u^e(\tilde x)$ decay exponentially as $|x|\to\infty$:
\begin{enumerate}
\item If $\tilde k\in \R$ is positive, it holds for any $z_1\geq0,\ z_2>0$ that
\begin{align}
|u^a(\tilde x)| &\leq \frac{\sqrt{l}}{\sqrt{2}\tilde k} e^{-\frac{\tilde k}{k}z_2|\eta|} \|f\|_{L^2(\Omega)},\qquad\qquad\qquad \mathrm{for}\ |x|>l,\label{eq:case1eq1}\\
|u^e(\tilde x)| &\leq \frac{\sqrt{l}}{\sqrt{2}\tilde k\left(1-(\delta k)^2\right)^2} e^{-\frac{\tilde k}{k}z_2|\eta|} \|f\|_{L^2(\Omega)},\quad \mathrm{for}\ |x|>l.\label{eq:case1eq2}
\end{align}
\item If $\tilde k$ is a complex constant with positive imaginary part, i.e., $\Im(\tilde k)>0$, for any $\lambda\in(0,1)$, when $\frac{z_1}{z_2}\geq - \frac{1}{1-\lambda}\frac{\Re(\tilde k)}{\Im(\tilde k)}$, it holds 
\begin{align}
|u^a(\tilde x)| &\leq \frac{\sqrt{l}}{\sqrt{2}|\tilde k|} e^{-\lambda \Im(\tilde k)(|x+\frac{z_1}{k}\eta|-l)} \|f\|_{L^2(\Omega)},\qquad\qquad\qquad \mathrm{for}\ |x|>l,\label{eq:case2eq1}\\
|u^e(\tilde x)| &\leq \frac{\sqrt{l}}{\sqrt{2}|\tilde k|\left(1-(\delta k)^2\right)^2} e^{-\lambda \Im(\tilde k)(|x+\frac{z_1}{k}\eta|-l)} \|f\|_{L^2(\Omega)},\quad \mathrm{for}\ |x|>l.\label{eq:case2eq2}
\end{align}
\end{enumerate}
\end{Theo}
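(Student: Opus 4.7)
The plan is to bound the explicit Green's-function representations \eqref{eq:baruExp} and \eqref{eq:GF} after carrying out the analytic continuation $x \mapsto \tilde x$ inside the integrand. By symmetry it suffices to treat $x > l$, for which $\tilde x = x + \frac{z}{k}\eta$ with $\eta \ge 0$. Since $\mathrm{supp}\, f \subset [-l,l]$, every $y$ in the integration range satisfies $y \le l < x$, so on the real line only the first branch of \eqref{eq:green} is activated and $G_x(y) = -\frac{1}{2\i\tilde k}\, e^{\i\tilde k(x-y)}$; being entire in $x$, this analytically continues to
\begin{equation*}
G_{\tilde x}(y) = -\frac{1}{2\i\tilde k}\, e^{\i\tilde k(\tilde x - y)}.
\end{equation*}
Moreover, for real $x>l$ one has $f(x)=0$, killing the Dirac contribution in \eqref{eq:GF}, so $u^e=(1-(c_\gamma k)^2)^{-2}\,u^a$ on $(l,\infty)$, and by analyticity this identity persists at $\tilde x$. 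Thus \eqref{eq:case1eq2} and \eqref{eq:case2eq2} follow from \eqref{eq:case1eq1} and \eqref{eq:case2eq1}, and the task reduces to estimating $u^a(\tilde x)=-\frac{1}{2\i\tilde k}\int_{-l}^{l}e^{\i\tilde k(\tilde x-y)}f(y)\,\mathrm{d}y$.

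Writing $\tilde k = \tilde k_r + \i\tilde k_i$ and $\tilde x - y = (x-y) + \frac{z_1+\i z_2}{k}\eta$, a direct expansion gives
\begin{equation*}
|e^{\i\tilde k(\tilde x - y)}| = \exp\!\Big(-\tilde k_i\bigl[(x-y) + \tfrac{z_1}{k}\eta\bigr] - \tilde k_r\tfrac{z_2}{k}\eta\Big).
\end{equation*}
In Case 1 ($\tilde k_i=0$, $\tilde k_r=\tilde k>0$) this exponent collapses to $-\frac{\tilde k}{k}z_2\eta$, which is independent of $y$ and valid for any $z_1\ge 0,\ z_2>0$. Combined with the Cauchy--Schwarz bound $\int_{-l}^{l}|f(y)|\,\mathrm{d}y \le \sqrt{2l}\,\|f\|_{L^2(\Omega)}$ and the prefactor $\tfrac{1}{2|\tilde k|}$, this immediately produces \eqref{eq:case1eq1}.

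The main obstacle is Case 2: when $\tilde k_r<0$, the term $-\tilde k_r\tfrac{z_2}{k}\eta$ is positive and threatens to swamp the decay coming from $-\tilde k_i\tfrac{z_1}{k}\eta$. To handle it I would first use $y\le l$ to bound $-\tilde k_i[(x-y)+\tfrac{z_1}{k}\eta] \le -\tilde k_i[(x+\tfrac{z_1}{k}\eta)-l]$, and then absorb a fraction $\lambda\in(0,1)$ into the target decay rate. After rearrangement this reduces the proof to the pointwise inequality
\begin{equation*}
(1-\lambda)\tilde k_i\bigl[(x-l)+\tfrac{z_1}{k}\eta\bigr] + \tilde k_r\tfrac{z_2}{k}\eta \ge 0.
\end{equation*}
Since $x-l\ge 0$ and $\eta\ge 0$, this is implied by $(1-\lambda)\tilde k_i z_1 + \tilde k_r z_2 \ge 0$, i.e.\ by $z_1/z_2 \ge -\Re(\tilde k)/[(1-\lambda)\Im(\tilde k)]$, which is exactly the stated hypothesis. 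One last Cauchy--Schwarz step then yields \eqref{eq:case2eq1}, and the case $x<-l$ is obtained by an identical argument after reflection, with $|x+\tfrac{z_1}{k}\eta|$ playing the role of $x+\tfrac{z_1}{k}\eta$ in the unified formula.
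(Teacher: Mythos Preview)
Your proposal is correct and follows essentially the same route as the paper: insert the explicit Green's-function branch $-\tfrac{1}{2\i\tilde k}e^{\i\tilde k(\tilde x-y)}$, compute the modulus of the exponential, use $y\le l$ to replace $x-y$ by $x-l$, and then invoke the hypothesis $z_1/z_2\ge -\Re(\tilde k)/[(1-\lambda)\Im(\tilde k)]$ to show the residual term $(1-\lambda)\tilde k_i[(x-l)+\tfrac{z_1}{k}\eta]+\tilde k_r\tfrac{z_2}{k}\eta$ is nonnegative. The only cosmetic difference is that the paper applies Cauchy--Schwarz to the full integral before bounding the exponential, whereas you bound the exponential pointwise first and then apply Cauchy--Schwarz to $\int_{-l}^{l}|f|$; both orderings give the identical constant $\sqrt{l}/(\sqrt{2}|\tilde k|)$.
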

\begin{proof}
Inequalities~\eqref{eq:case1eq1} and \eqref{eq:case1eq2} can be obtained by the same arguments in \cite[Theorem 1]{DuZhangNonlocal1}, and we here omit the details. We mainly prove \eqref{eq:case2eq1}. For $x>l$,
\begin{align}
	|u^a(\tilde x)| = & \left|\int_\Omega G_{\tilde x}(y)f(y) \mathrm{d}y\right| = \left|\int_{\Omega} C_0 e^{\i\tilde k\big((\Re(\tilde x)-y)+\i \Im(\tilde x)\big)} f(y) \mathrm{d}y\right| \notag\\
	\leq         & \frac{1}{2|\tilde k|} \left(\int_\Omega e^{-2\big(\Im(\tilde k)(\Re(\tilde x)-y)+\Re(\tilde k) \Im(\tilde x)\big)} \mathrm{d}y \right)^\frac12 \|f\|_{L^2(\Omega)}\notag\\
	\leq& \frac{\sqrt{2l}}{2|\tilde k|} e^{-\big(\Im(\tilde k)(\Re(\tilde x)-l)+\Re(\tilde k)\Im(\tilde x)\big)} \|f\|_{L^2(\Omega)}.\label{eq:prfeq1}
\end{align}
Note that $\Re(\tilde x)=x+\frac{z_1}{k}\eta$, $\Im(\tilde x)=\frac{z_1}{k}\eta$ and $\eta(x)>0$ when $x>l$. When 
$$\frac{x-l}{\eta}\frac{k}{z_2}+\frac{z_1}{z_2}\geq - \frac{1}{1-\lambda}\frac{\Re(\tilde k)}{\Im(\tilde k)},$$
we have by a simple calculation that 
\begin{align}
\Im(\tilde k)(\Re(\tilde x)-l)+\Re(\tilde k)\Im(\tilde x) = \Im(\tilde k)(x+\frac{z_1}{k}\eta-l)+\Re(\tilde k)\frac{z_2}{k}\eta \geq \lambda \Im(\tilde k)(x+\frac{z_1}{k}\eta-l).\label{eq:prfeq2}
\end{align}
Combining \eqref{eq:prfeq1} and \eqref{eq:prfeq2}, we get \eqref{eq:case2eq2} for $x>l$. 

Similarly, we also can get \eqref{eq:case2eq1} for $x<-l$ in analogy to \eqref{eq:prfeq1}--\eqref{eq:prfeq2}.
\end{proof}

\begin{Rem} We point out that 
\begin{enumerate}
\item For the case of $\tilde k\in\R^+$, the behaviors of $u^a(\tilde x)$ and $u^e(\tilde x)$ have been studied for the PML coefficient $z$ with $z_1=0$ in \cite[Theorems 1,3]{DuZhangNonlocal1}, which have the same estimates as \eqref{eq:case1eq1}-\eqref{eq:case1eq2}. This implies that the real part of $z$ has no influence on the wave decaying in theory in this situation.
\item For the case of the complex $\tilde k$ with $\Im(\tilde k)>0$, the theoretical analysis in \cite{DuZhangNonlocal1} and the above theorem \eqref{theo31} shows that the solutions $u^a(x)$ and $u^e(x)$ themself decay exponentially as $|x|\to\infty$. In this situation one may make the computational region large enough and directly impose zero boundary boundary conditions on the artificial boundary layers since an evanescent wave is decaying anyway. It is a nice idea, but one need to judge whether $\tilde k$ is complex, which is nontrivial by Eq.~\eqref{eq:relation} for various kernels. The above theorem \eqref{theo31} shows that the PML modifications still hold valid, which can accelerate the process of wave decaying by making $|z_1\eta|$ large where $\Im(\tilde k)$ may be relatively small and an evanescent wave would need a large computational domain to decay sufficiently.
\item On the other hand, a large $z_1$ also brings the side-effects that the propagating waves oscillate violently in the PML, and exacerbates the numerical reflections, which can be observed in our numerical tests (Examples 1.3 and 1.4 in Section~\ref{sec:ne}).
\end{enumerate}
\end{Rem}

\section{Numerical implementation and scheme for PML equations}\label{sec:NSPE}
In this section, we focus on the numerical implementations and schemes to solve the nonlocal PML equations by truncating the computational domain and presenting the finite difference scheme to discretize the nonlocal operator. To achieve it, we make the finite horizon assumption for the kernel:
\begin{align}
\gamma(x,y)=0\quad \mbox{when}\ |x-y|_2>\delta,\ x,y\in\R^d,\ d=1,2.\label{eq:fhass}
\end{align}
Here $\delta$ stands for the horizon of nonlocal effects. For more discussions on nonlocal constraints defined on a domain with a nonzero volume we refer to \cite{TTD17,TianDu,du2012analysis,chen2011continuous}, etc.

\subsection{The selections of computational regions and boundary conditions}
Note that nonlocal Helmholtz equations with PML modifications are still defined on unbounded domains, thus we need to truncate the computational domains of interest. Once we have performed PML transformations of our wave equations, solutions are unchanged in the region of interest $\Omega$ and exponentially decaying in the exterior regions $\R^d\setminus\Omega$. This implies the computational region can be truncated at some sufficiently large $x$ by putting a hard wall, such as the zero volume constrained boundary conditions.

Denote by $\Omega_p$ PML layers, such as a square
\begin{align}
\Omega_p= \big\{ x\in \R^d\setminus\Omega\big|\ \inf_{y\in\Omega} |x-y|_\infty<d_{pml}\big\},\ \mathrm{ ( see \;Figures~\ref{fig:truncated_domaina}\ and\ \ref{fig:truncated_domainb})}, \label{eq:Omegaps}
\end{align}
and a circular region
\begin{align}
\Omega_p= \big\{ x\in \R^d\setminus\Omega\big|\ \inf_{y\in\Omega} |x-y|_2<d_{pml}\big\},\ \mathrm{(see \;Figure~\ref{fig:truncated_domainc}),} \label{eq:Omegapc}
\end{align}
where $d_{pml}$ is a positive constant by denoting the width of PML layers.

By the finite horizon assumption, we define the boundary layer
\begin{align}
\Omega_b := \big\{ x\in \R^d\setminus(\Omega\cup\Omega_p)\big|\ \mathrm{dist}(x,\Omega_p)\leq\delta\big\}.
\end{align}
The definition of the distance between $x$ and $\Omega_p$ depends on the choice of PML style. For example, we can choose $\mathrm{dist}(x,\Omega_p):=\inf_{y\in\Omega_p}|x-y|_\infty$ (Figure~\ref{fig:truncated_domainb}) for the PML~\eqref{eq:carPML} in Cartesian coordinates, and choose $\mathrm{dist}(x,\Omega_p):=\inf_{y\in\Omega_p}|x-y|_2$ (Figure~\ref{fig:truncated_domainc}) for the PML~\eqref{eq:polPML} in polar coordinates.

We get the following truncated nonlocal PML problem subject to a nonlocal constraint of the Dirichlet type
\begin{align}
	\tilde{\mathcal{L}}\hat{\tilde u}(x) - k^2\frac{\partial \tilde x}{\partial x}\hat{\tilde u}(x) = & f(x),\quad & & x\in\Omega\cup\Omega_p,  \label{eq:truPML1}  \\
	\hat{\tilde u}(x) =       & 0,  & & x\in \Omega_b.	\label{eq:truPML2}
\end{align}
\begin{figure}[htbp]
\centering
\begin{subfigure}{.32\textwidth}
 \centering
 \includegraphics[width=\textwidth]{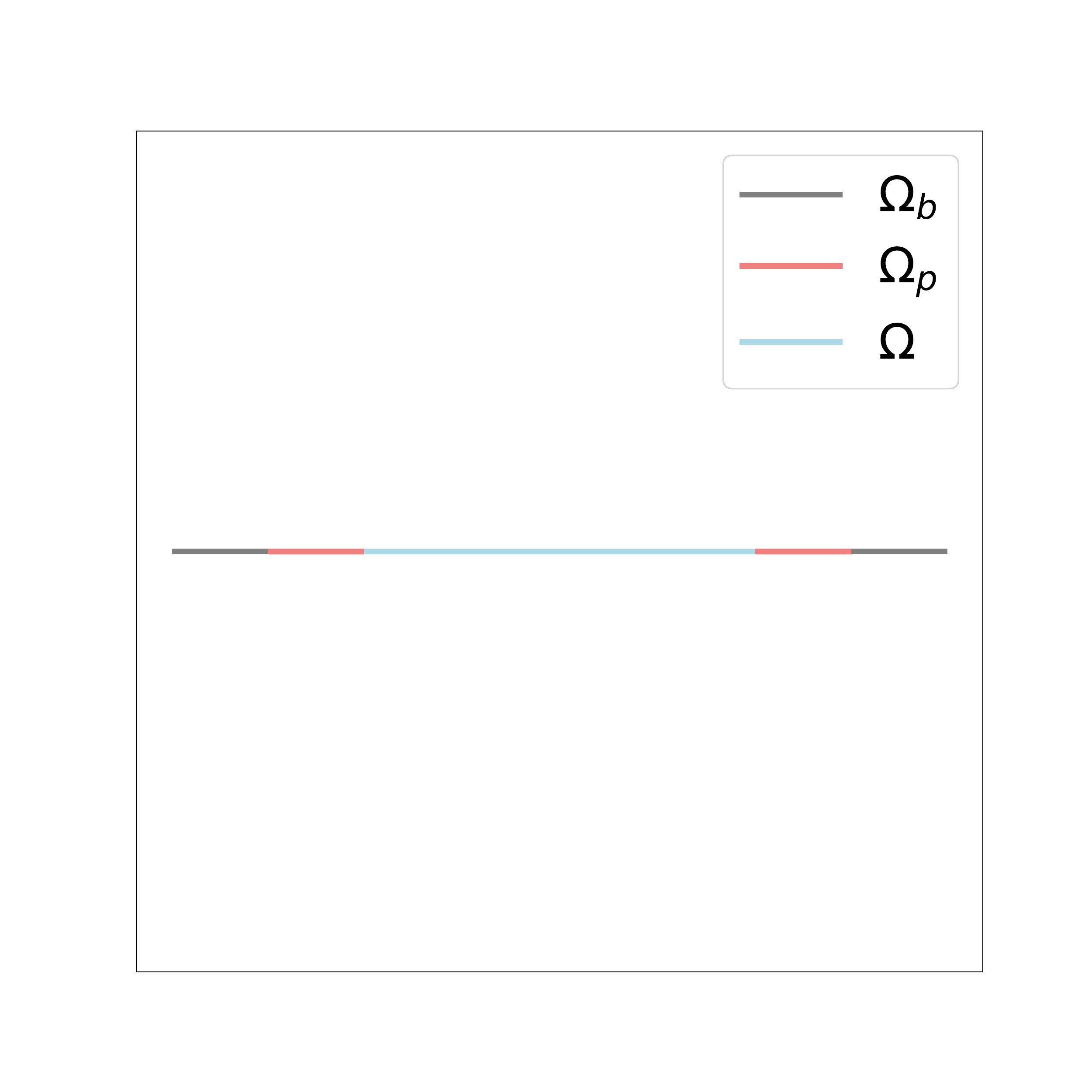} 
 \caption{1D}
 \label{fig:truncated_domaina}
\end{subfigure}
\begin{subfigure}{.32\textwidth}
 \centering
 \includegraphics[width=\textwidth]{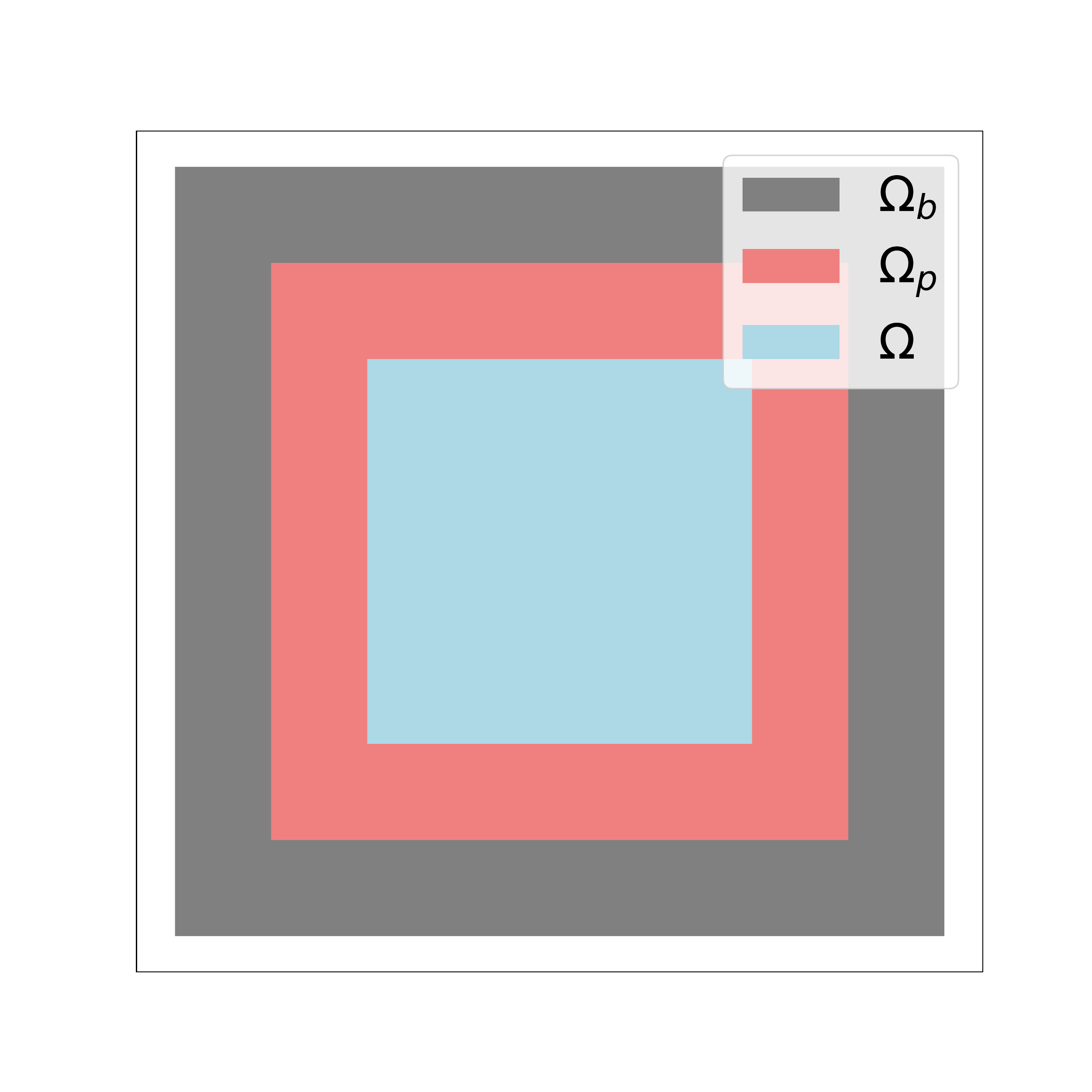} 
 \caption{2D}
 \label{fig:truncated_domainb}
\end{subfigure}
\begin{subfigure}{.32\textwidth}
 \centering
 \includegraphics[width=\textwidth]{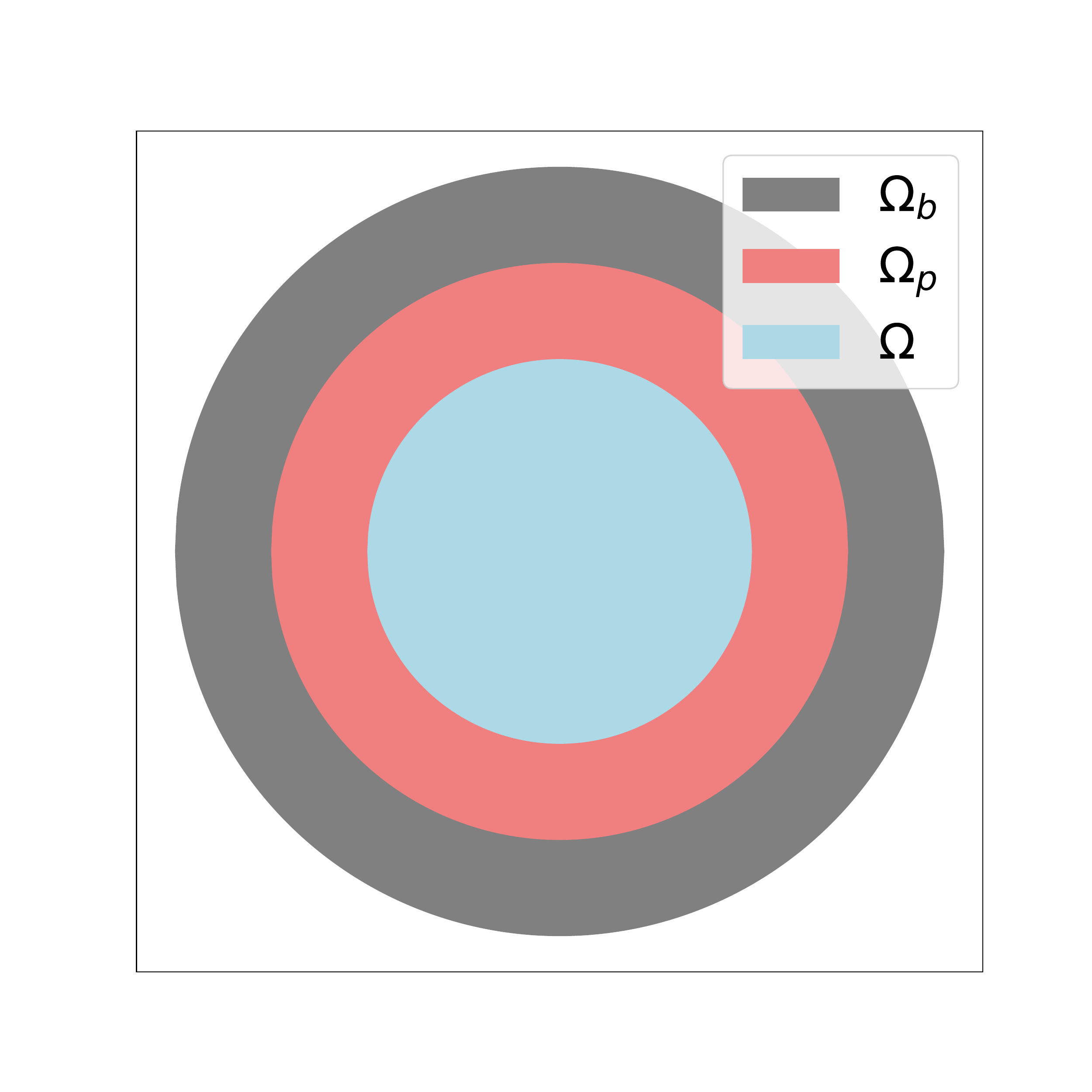} 
 \caption{2D}
 \label{fig:truncated_domainc}
\end{subfigure}
	\caption{Truncated domains in one and two dimensions. Left: 1D. Middle: $\Omega$ and $\Omega_p$ squares in 2D. Right: $\Omega$ and $\Omega_p$ is circular regions in 2D.} \label{fig:domain}
\end{figure}

\subsection{The discretization schemes} 
We now consider numerical schemes of truncated PML problem \eqref{eq:truPML1}-\eqref{eq:truPML2} by using an asymptotic compatibility scheme to discretize the nonlocal operator $\tilde{\mathcal{L}}$ to based on quadrature-based finite difference proposed in \cite{du2018nonlocal}. One can refer to more discretizations of $\tilde{\mathcal{L}}$ in \cite{du2019a,DuCbms,DUGunLeZhou,DZZ18,TTD17,TianDu,TianDu2}.

\subsubsection{The numerical scheme for one-dimensional case}
Let $[-l-d_{pml}-\delta,l+d_{pml}+\delta]$ be discretized by a uniform grid $x_{-N},x_{-N+1},\cdots,x_N $ with mesh size $h$. For simplicity, assume there exist integers $M_1$ and $M_2$ such that $l=M_1h$ and $d_{pml}=M_2h$. Denote by $\tilde \gamma(x,y) = \gamma(\tilde x,\tilde y)\alpha(x)\alpha(y)$ and
\begin{align*}
\tilde F(x,y,s) := \frac{\hat{\tilde u}(x)-\hat{\tilde u}(y)}{y-x} s \tilde \gamma\left(\frac{x+y}{2}-\frac{s}{2},\frac{x+y}{2}+\frac{s}{2}\right).
\end{align*}
It's clear that
\begin{align*}
\tilde{\mathcal{L}}\hat{\tilde u}(x) = \int_{\R} \tilde F(x,y,y-x) \mathrm{d}y.
\end{align*}
Taking $\phi_m(y)$ by the hat function of width $h$ centered at $y_m=mh$, and expanding $\tilde F(x,y,s)$ with respect to $y$ by 
\begin{align*}
\tilde F_h(x,y,s) =& \sum_m \phi_m(y) \tilde F(x,x_m,s),
\end{align*}
we then have the discretization of $\tilde{\mathcal{L}}$ as
\begin{align}
\tilde{\mathcal{L}}_h\hat{\tilde u}(x_n) =& \int_\R \tilde F_h(x_n,y,y-x_n)\dy\notag\\
=& \int_\R \sum_m \phi_m(y) \tilde F(x_n,x_m,y-x_n) \dy\notag\\
=& \sum_{m\neq n} \int_\R \phi_m(y)\frac{\hat{\tilde u}(x_n)-\hat{\tilde u}(x_m)}{x_m-x_n} (y-x_n) \tilde \gamma\left(\frac{x_n+x_m}{2}-\frac{y-x_n}{2},\frac{x_n+x_m}{2}+\frac{y-x_n}{2}\right)\dy\notag\\
=& \sum_{m\in\mathbb{Z}} \tilde a_{n,m} \hat{\tilde u}(x_m), \label{eq:diskercal}
\end{align}
where the coefficients can be calculated by 
\begin{align*}
	\tilde a_{n,m} = \left\{
	\begin{aligned}
		&- \frac{1}{(m-n)h} \int_{\R} \Big[\phi_{m}(y) (y-x_n) \tilde \gamma\Big(\frac{x_m+y}{2},x_n+\frac{x_m-y}{2}\Big) \Big]\dy, & & m\neq n, \\
		 & -\sum_{m\neq n} \tilde a_{n,m}\quad             & & m=n.
	\end{aligned}\right.
\end{align*}
By the finite horizon assumption~\eqref{eq:fhass}, in general we have $\tilde a_{n,m}=0$ when $|n-m|>\lceil \delta/h \rceil$.

Denote by $\mathcal{I}$ the index set with $n$ such that $x_n\in\Omega$, by $\mathcal{I}_p$ the index set with $n$ such that $x_n\in\Omega_p$ and by $\mathcal{I}_b$ the index set with $n$ such that $x_n\in\Omega_b$. Replacing the continuous nonlocal operator $\tilde{\mathcal{L}}$~\eqref{eq:nonlocalPMLoperator} with the above discrete counterpart, we have the following approximate discrete nonlocal Helmholtz system
\begin{align}
\sum_{m\in\mathbb{Z}} \tilde a_{n,m} \hat{\tilde u}_m - k^2 \alpha(x_n) \hat{\tilde u}_n =&  f(x_n),\quad n\in\mathcal{I}\cup\mathcal{I}_p,\\
\hat{\tilde u}_n =& 0,\qquad\quad n\in \mathcal{I}_b,
\end{align}
where $\hat{\tilde u}_n$ stands for the numerical approximation of $\hat{\tilde u}(x_n)$.

\subsubsection{The numerical scheme for two-dimensional case}
Set $\{x_{\n}\in\Omega\cup\Omega_p\cup\Omega_b\}$ to be the set of nodes of a uniform Cartesian mesh with size $h$, where $\n=(n_1,n_2)$ denotes a multi-index with $x_\n=h\n$. An illustration of the mesh grid is given in Figure \ref{fig:2Dmesh} for the two kinds of PMLs. 

Set $\tilde\gamma( x, y)= \gamma(\tilde x,\tilde y) \frac{\partial \tilde x}{\partial x}(x) \frac{\partial \tilde y}{\partial y}(y)$ and 
\begin{align*}
\tilde F(x,y,s) = \frac{\tilde u(x)-\tilde u(y)}{w(x- y)} w(s)\tilde\gamma(\frac{x+y}{2}+\frac{s}{2}, \frac{x+y}{2}-\frac{s}{2}),\quad x,y,s\in\R^2,
\end{align*}
where $w(s) = {|s|_2^2}/{|s|_1}$ represents a weight function introduced in \cite{tian2017a,du2019asymptotically}. Set $\phi_\m$ to be the piecewise bilinear basis function with $\phi_\m(x_\n)=0$ for $\m\neq\n$, and $\phi_\m(x_\m)=1$ for others. We expand $\tilde F (x, y, s)$ with respect to $y$ by 
\begin{align*}
\tilde F_h(x,y,s) = \sum_{\m} \tilde F(x,x_\m,s)\phi_\m(y). 
\end{align*}
In analogy to \eqref{eq:diskercal}, we get the approximation of $\tilde{\mathcal{L}}$ as
\begin{align*}
	\tilde{\mathcal{L}}_h\hat{\tilde u}(x_\n) = \sum_{\m\neq\n} \tilde a_{\n,\m} \hat{\tilde u}(x_\m),
\end{align*}
where
\begin{align*}
	\tilde a_{\n,\m} = \left\{
	\begin{aligned}
		&- \frac{1}{w(x_\m-x_\n)} \int_{\R^2} \Big[\phi_{\m}(y) w(y-x_\n) \tilde \gamma\Big(\frac{x_\m+y}{2},x_\n+\frac{x_\m-y}{2}\Big) \Big]\dy, & & \m\neq \n, \\
		 & -\sum_{\m\neq \n} \tilde a_{\n,\m}\quad             & & \m=\n.
	\end{aligned}\right.
\end{align*}
By the finite horizon assumption~\eqref{eq:fhass}, in general we have $\tilde a_{\n,\m}=0$ when $|\n-\m|>\lceil \delta/h \rceil$.

We still denote by $\mathcal{I}$ the index set with $\n$ such that $x_\n\in\Omega$, by $\mathcal{I}_p$ the index set with $\n$ such that $x_\n\in\Omega_p$ and by $\mathcal{I}_b$ the index set with $\n$ such that $x_\n\in\Omega_b$ (see Figure~\ref{fig:2Dmesh}). Replacing the continuous nonlocal operator $\tilde{\mathcal{L}}$ in \eqref{eq:carPML} or \eqref{eq:polPML} with the above discrete counterpart, we have the following approximate system
\begin{align}
\sum_{\m\in\mathbb{Z}^2} \tilde a_{\n,\m} \hat{\tilde u}_\m - k^2 \alpha(x_\n) \hat{\tilde u}_\n =&  f(x_\n),\quad \n\in\mathcal{I}\cup\mathcal{I}_p,\\
\hat{\tilde u}_\n =& 0,\qquad\quad \n\in \mathcal{I}_b,
\end{align}
where $\hat{\tilde u}_\n$ stands for the numerical approximation of $\hat{\tilde u}(x_\n)$.


\section{Numerical Examples} \label{sec:ne}
We now provide numerical examples to illustrate the effectiveness of PML techniques. Subsection~\ref{subsec:ne_1D} focuses on one dimensional case, and presents the ``numerical reflections'' generated by large PML coefficient $z$. Subsection~\ref{subsec:ne_2D} focuses on two dimensional case. 

In the following examples, we consider the errors between $u$ and $\hat{\tilde u}_h$ in $L^2$ norm over $\Omega$, i.e., $\|u-\hat{\tilde u}_h\|_{L^2(\Omega)}$, where $u$ represents the ``exact'' solution and $\hat{\tilde u}_h$ represents the numerical solution. For 1D, the exact solution for the radical kernel $ \gamma(x,y) = \gamma_r(x-y)$ \eqref{k1} is obtained by the explicit formula \eqref{eq:GF}, which is taken as a benchmark solution to study ``numerical reflections'' for different PML coefficients $z$. The exact solutions for other kernels are obtained by the quadrature-based finite difference discretization with a sufficiently fine mesh over a domain large enough, and with sufficiently large $d_{pml}$.

\subsection{Numerical examples in one dimension}\label{subsec:ne_1D}
In the following examples, the absorption coefficient is chosen as
\begin{align*}
\sigma(t) = \begin{cases}
0, & |t|<l,\\
\frac{1}{d_{pml}}(|t|-l), & l\leq|t|.
\end{cases}
\end{align*}

\textbf{Example 1.1.} Here we set $l=d_{pml}=1$ and consider the typical radical kernel $\gamma(x,y) = \gamma_1(|x-y|)$ given in \eqref{k1}. The source is chosen as a Gaussian function 
\begin{equation}\label{GSF}
f(x) = \frac{k}{\sqrt{\pi}} e^{-k^2x^2}. 
\end{equation}

Figure~\ref{fig:ex1solutions1} plots the solutions for $k=2\pi$ and different $c_\gamma$. The numerical solutions are obtained with mesh size $h=2^{-10}$ and PML coefficient $z=40\i$. The exact solution is obtained by using \eqref{eq:GF}. Figure~\ref{fig:ex1solutions1a} shows that the numerical solution for $k=2\pi$ and $c_\gamma=0.9/k$ decays exponentially in PML layers. For $k=2\pi$ and $c_\gamma=1.1/k$, since the exact solution $u$ itself decays outside the support of $f(x)$, one can observe in Figure~\ref{fig:ex1solutions1b} that the real part of $\hat{\tilde u}_h$ is the same as that of $u$, but the PML makes the imaginary part of the solution oscillate with an amplitude of 1e-7 in $\Omega$.

\begin{figure}[htbp]
\centering
\begin{subfigure}{.48\textwidth}
 \centering
 \includegraphics[width=0.49\textwidth]{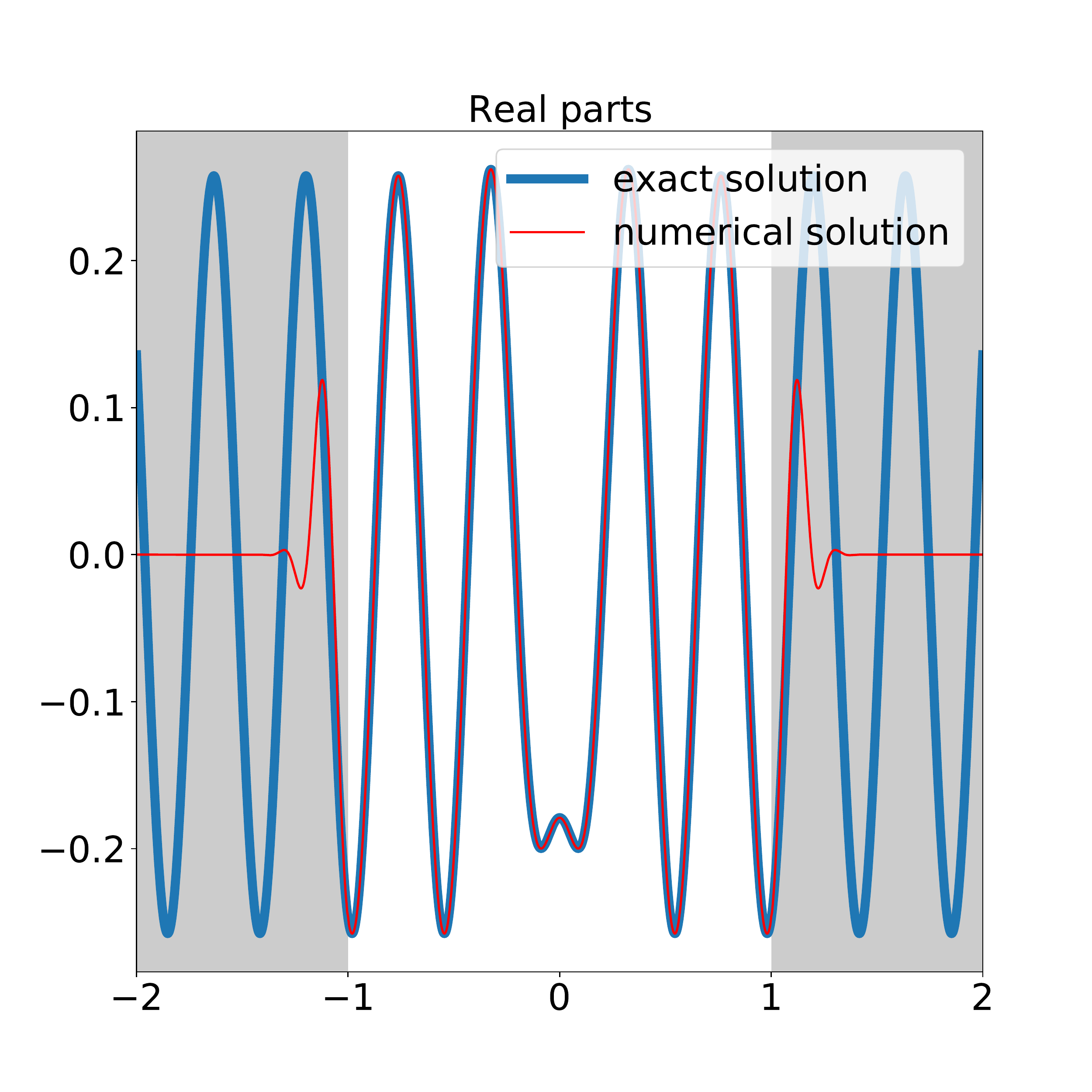}
 \includegraphics[width=0.49\textwidth]{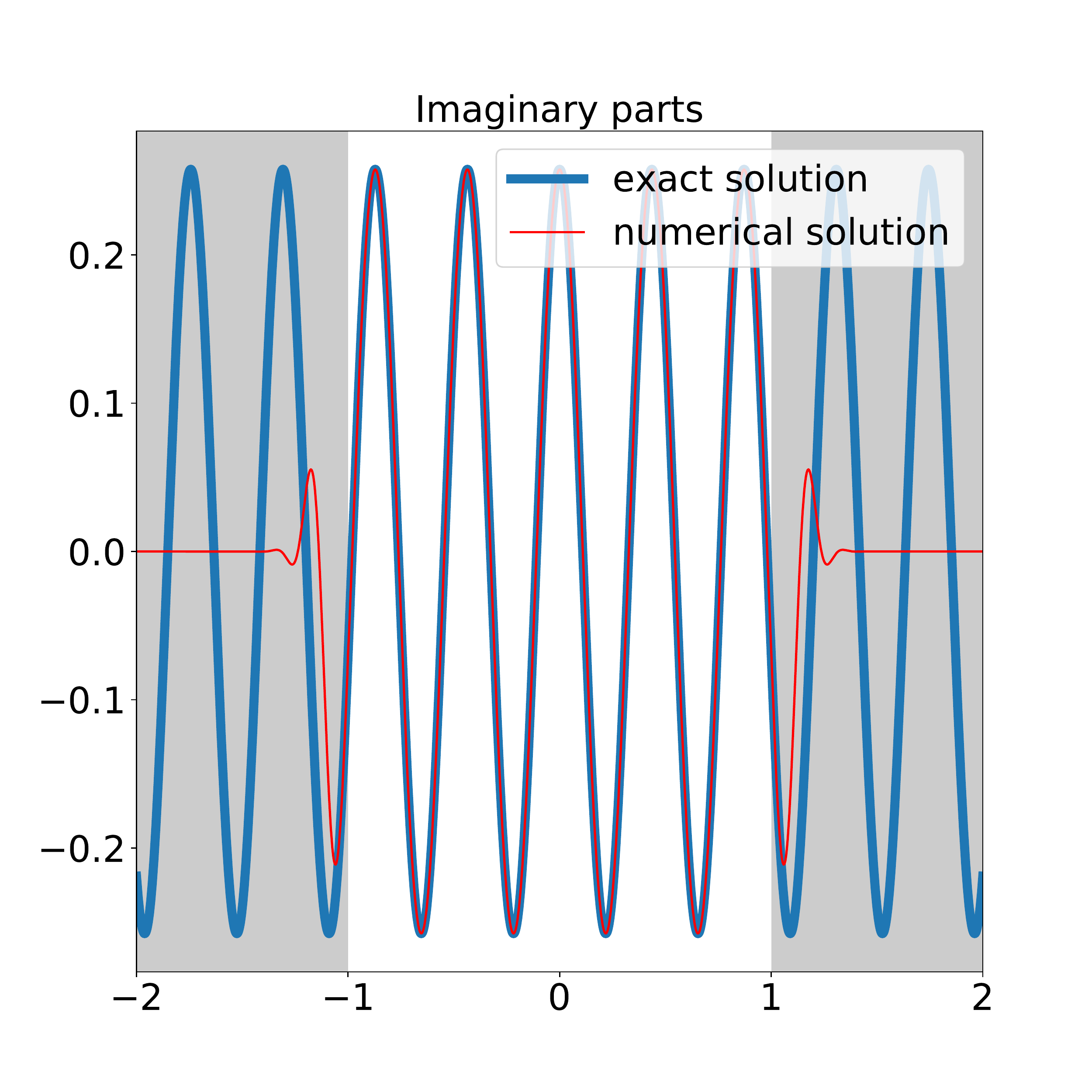}
 \caption{$k=2\pi,c_\gamma=0.9/k$}
 \label{fig:ex1solutions1a}
\end{subfigure}
\begin{subfigure}{.48\textwidth}
 \centering
 \includegraphics[width=0.49\textwidth]{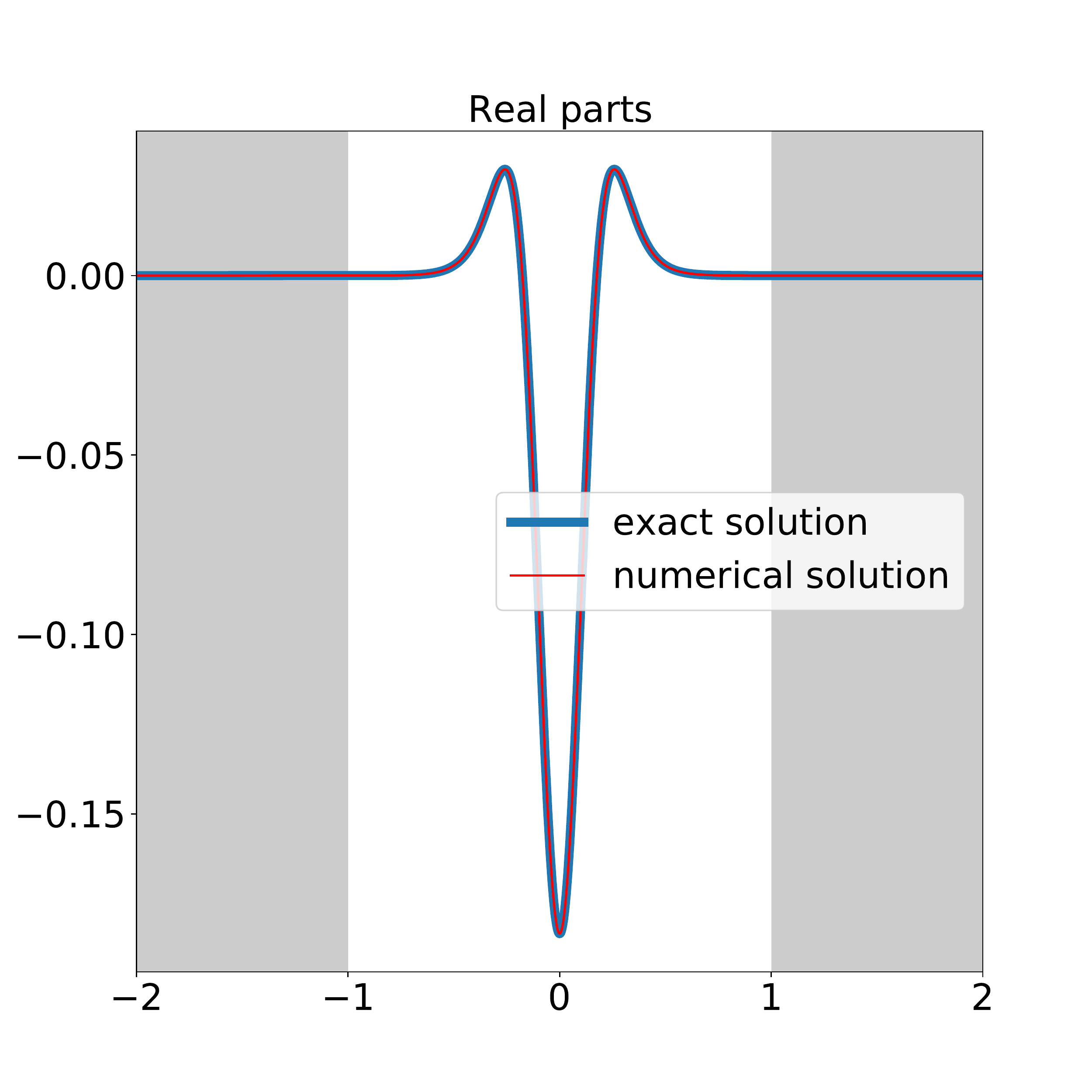}
 \includegraphics[width=0.49\textwidth]{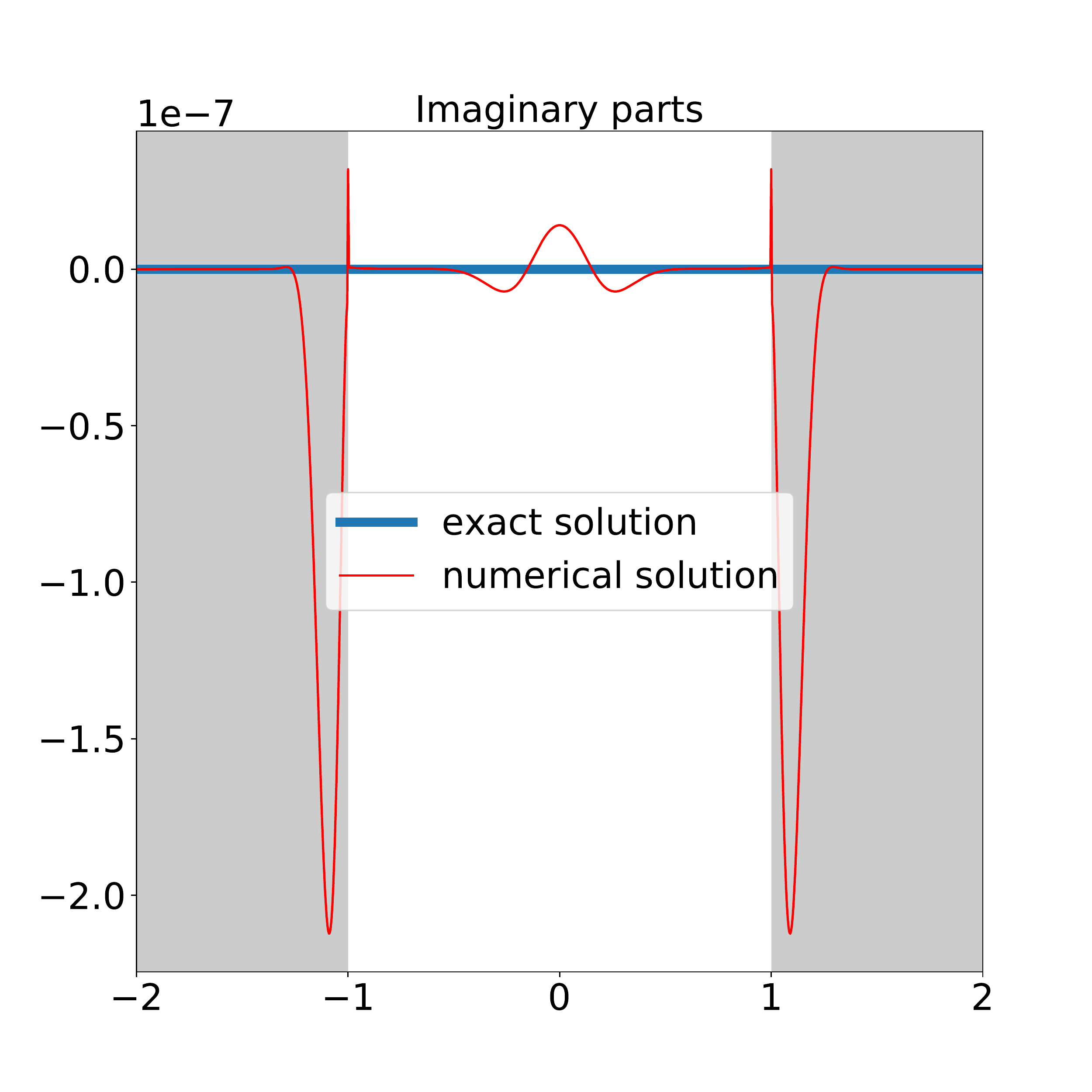}
 \caption{$k=2\pi,c_\gamma=1.1/k$}
 \label{fig:ex1solutions1b}
\end{subfigure}
	\caption{(Example 1.1) the exact solution $u$ without any modification and numerical solution $\hat{\tilde u}_h$ for $k=2\pi$, $c_\gamma=0.9/k$ and $k=2\pi$, $c_\gamma=1.1/k$, respectively. $\hat{\tilde u}_h$ is obtained on the mesh with $h=2^{-10}$. The PML region for numerical solutions are shaded in light grey.} \label{fig:ex1solutions1}
\end{figure}

 To investigate the influence of $z$ on numerical PML solutions for $k=2\pi$, we present the errors by taking various $z=10\i,20\i,40\i,80\i,160\i,10(1+\i),20(1+\i),40(1+\i),80(1+\i),40(2+\i),40(4+\i)$ and mesh sizes $h=2^{-n},(n=4,5,\cdots,8)$. Bar graphs in Figure~\ref{fig:ex1errors} show the errors for $c_\gamma=0.9/k$, where the exact solution is plotted in Figure~\ref{fig:ex1solutions1a}. It can be seen that large $z$ causes ``numerical reflections''. For $c_\gamma=1.1/k$, we also compute the errors for the above $z$ and $z=0$ since the exact solution itself decays exponentially (Theorem~\cite[Theorem 3]{DuZhangNonlocal1} and Figure~\ref{fig:ex1solutions1b}). Different from those for $c_\gamma=0.9/k$, large $z$ causes few ``numerical reflections'' for $c_\gamma=1.1/k$. These results suggest that one can use PML whatever $\tilde k$ is real or complex in practical implementations. This is, one can use the PML no matter the solution itself decays exponentially or itself does not decay exponentially.
 
\begin{figure}[htbp]
\centering
\begin{subfigure}{.19\textwidth}
 \centering
 \includegraphics[width=\textwidth]{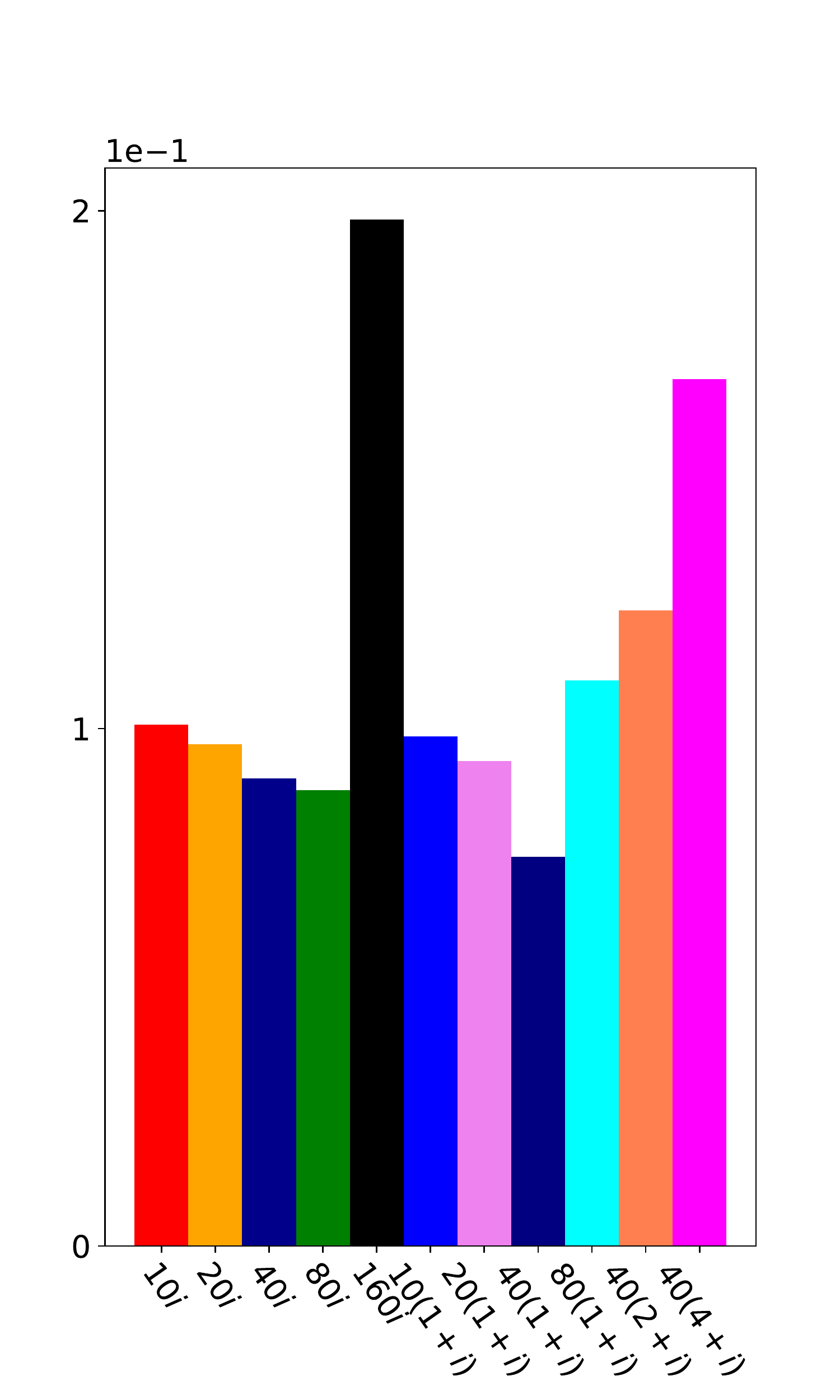} 
 \caption{$h=2^{-4}$}
 \label{fig:errors_sigpmlni_1}
\end{subfigure}
\begin{subfigure}{.19\textwidth}
 \centering
 \includegraphics[width=\textwidth]{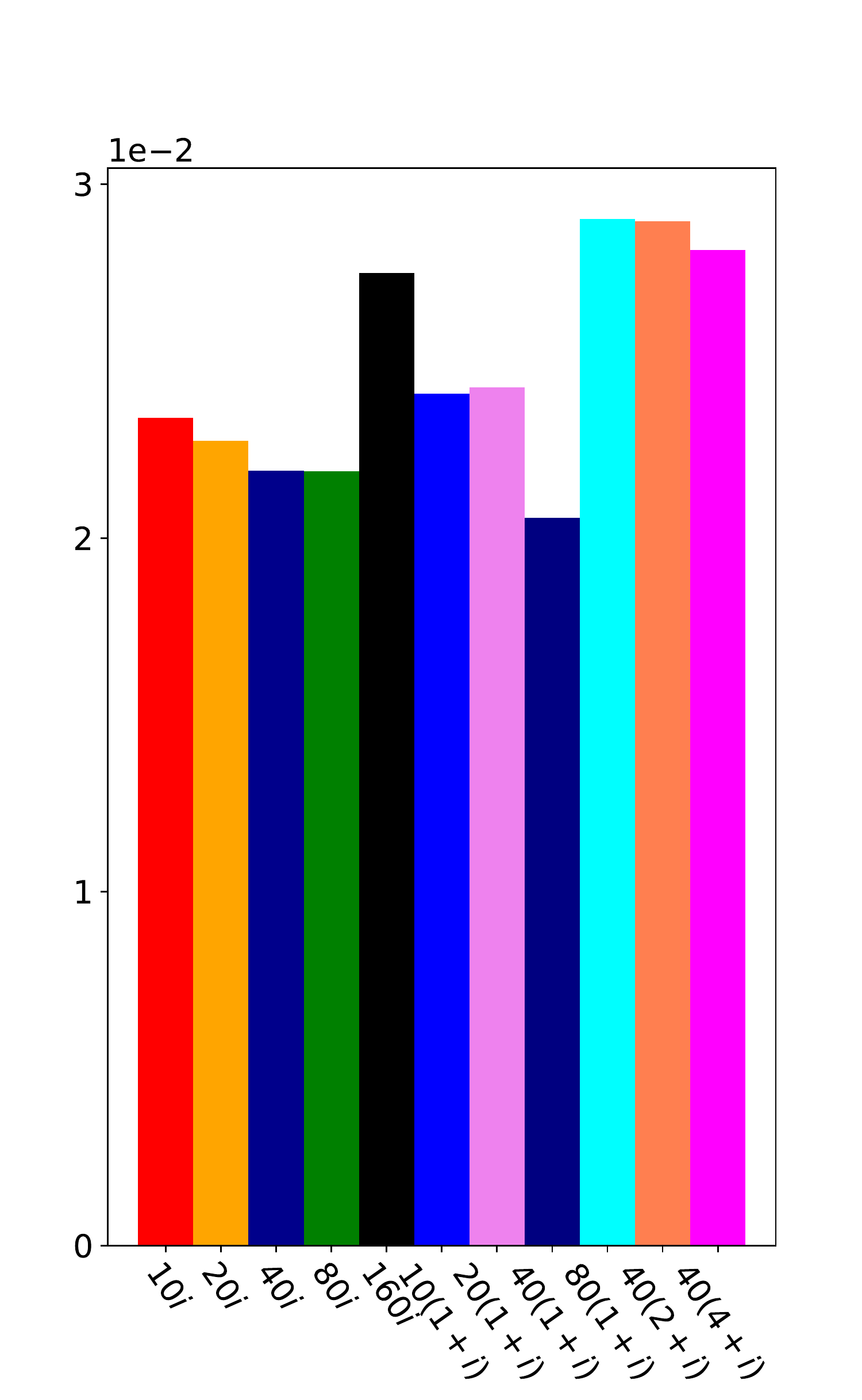} 
 \caption{$h=2^{-5}$}
 \label{fig:errors_sigpmlni_2}
\end{subfigure}
\begin{subfigure}{.19\textwidth}
 \centering
 \includegraphics[width=\textwidth]{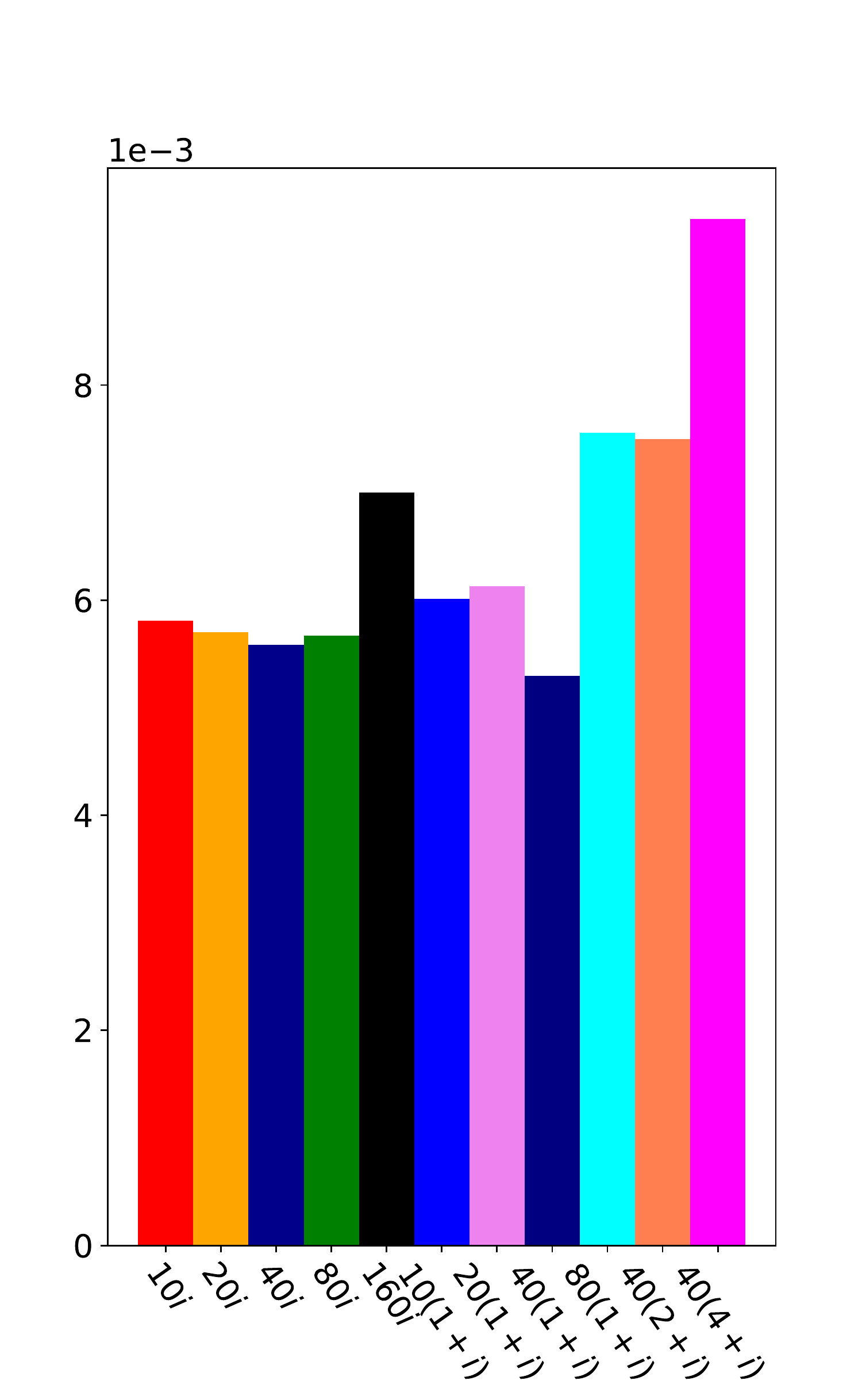} 
 \caption{$h=2^{-6}$}
 \label{fig:errors_sigpmlni_3}
\end{subfigure}
\begin{subfigure}{.19\textwidth}
 \centering
 \includegraphics[width=\textwidth]{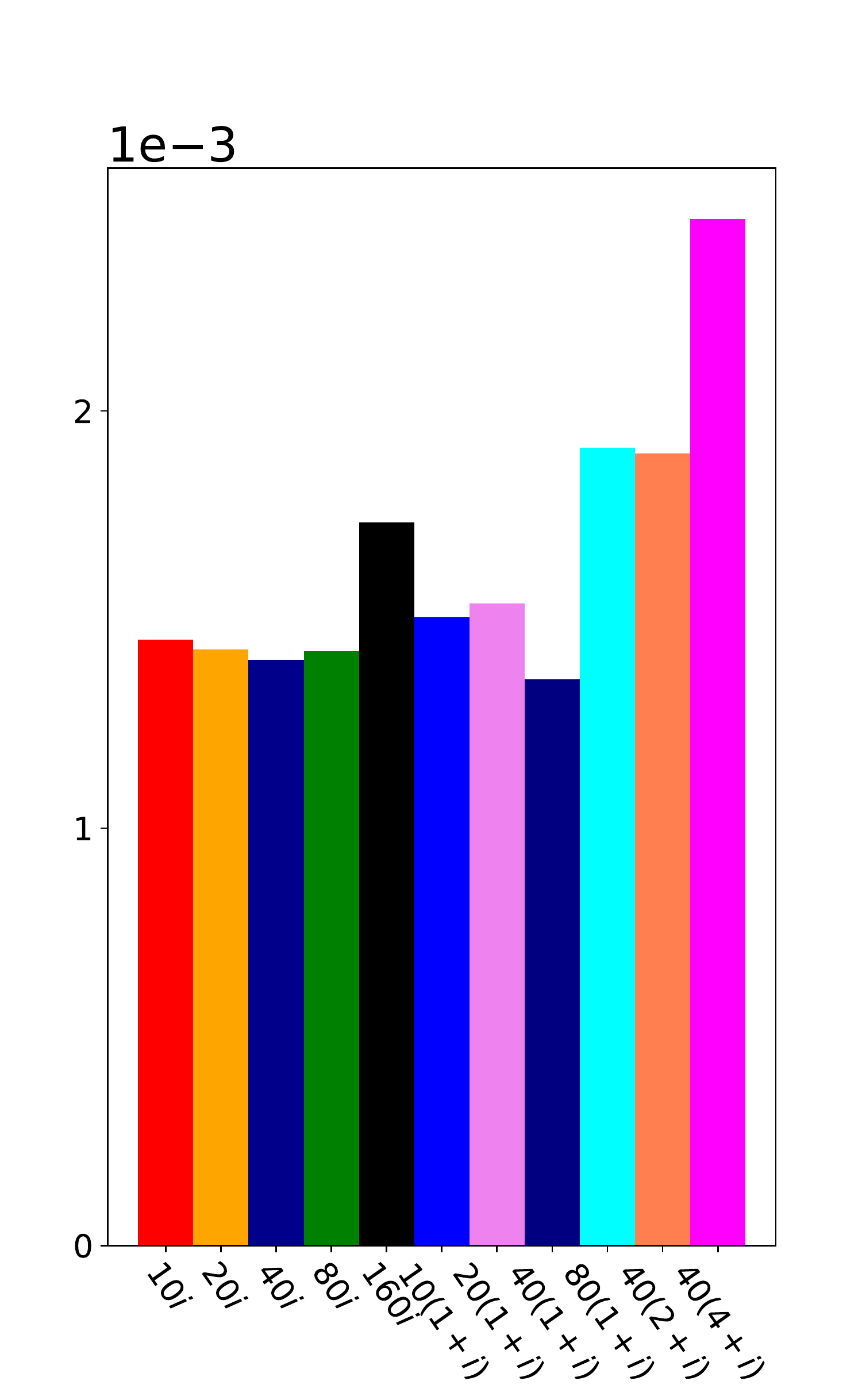} 
 \caption{$h=2^{-7}$}
 \label{fig:errors_sigpmlni_4}
\end{subfigure}
\begin{subfigure}{.19\textwidth}
 \centering
 \includegraphics[width=\textwidth]{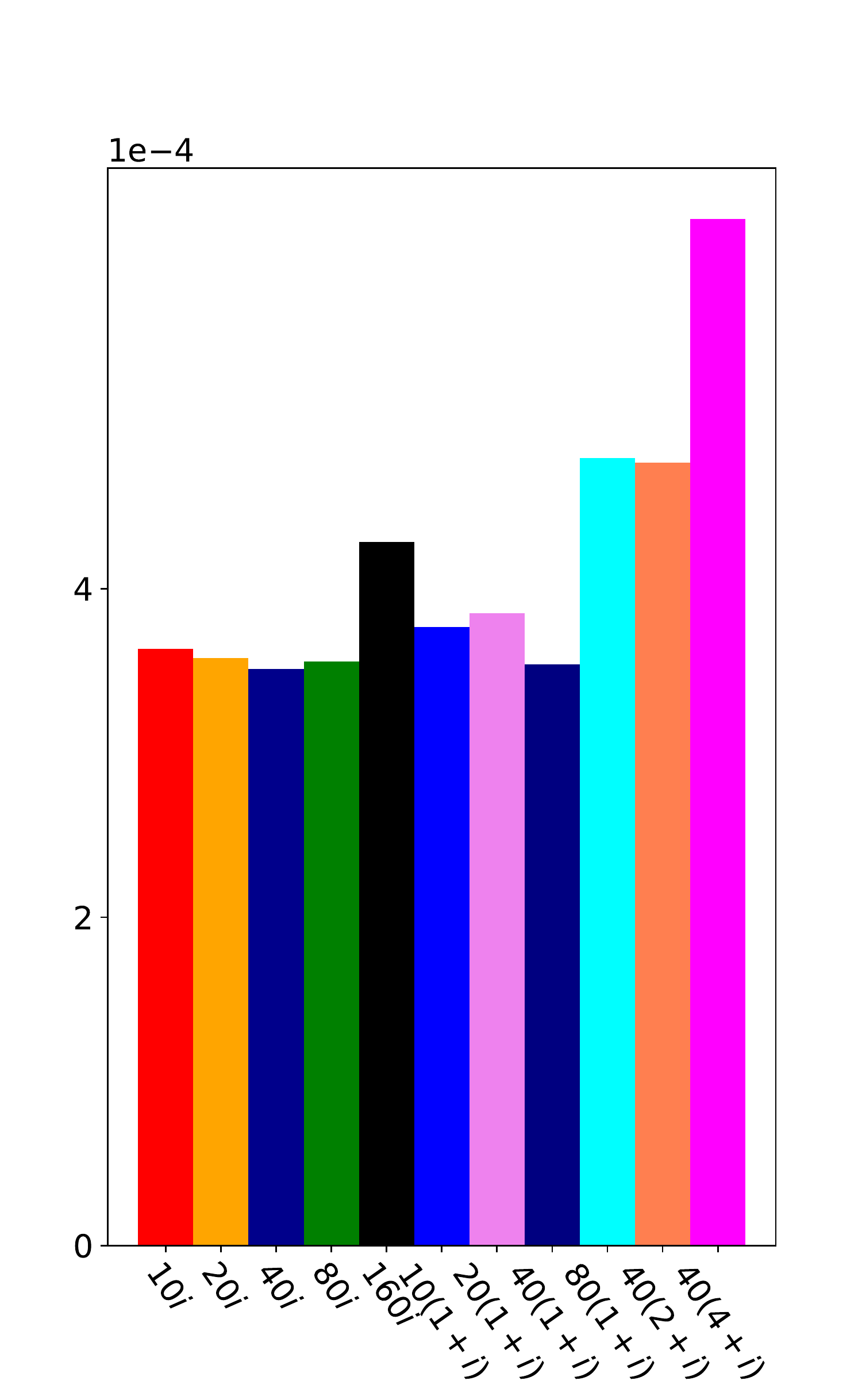} 
 \caption{$h=2^{-8}$}
 \label{fig:errors_sigpmlni_5}
\end{subfigure}
	\caption{(Example 1.1) errors for $k=2\pi$, $c_\gamma=0.9/k$ and different PML coefficients $z=10\i,20\i,40\i,80\i,160\i,10(1+\i),20(1+\i),40(1+\i),80(1+\i),40(2+\i),40(4+\i)$.} \label{fig:ex1errors}
\end{figure}

\begin{figure}[htbp]
\centering
\begin{subfigure}{.19\textwidth}
 \centering
 \includegraphics[width=\textwidth]{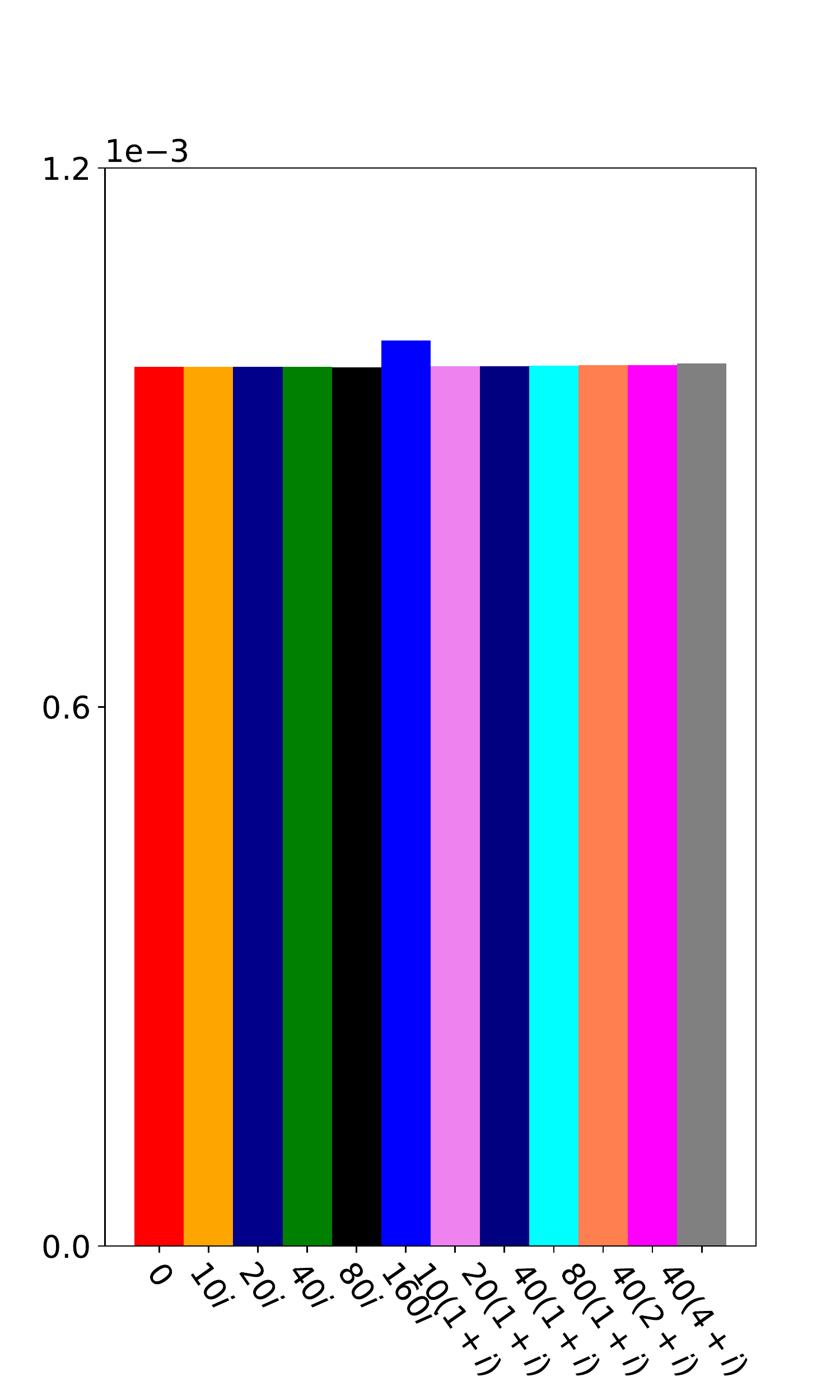} 
 \caption{$h=2^{-4}$}
 \label{fig:errors_sigpmlni_1}
\end{subfigure}
\begin{subfigure}{.19\textwidth}
 \centering
 \includegraphics[width=\textwidth]{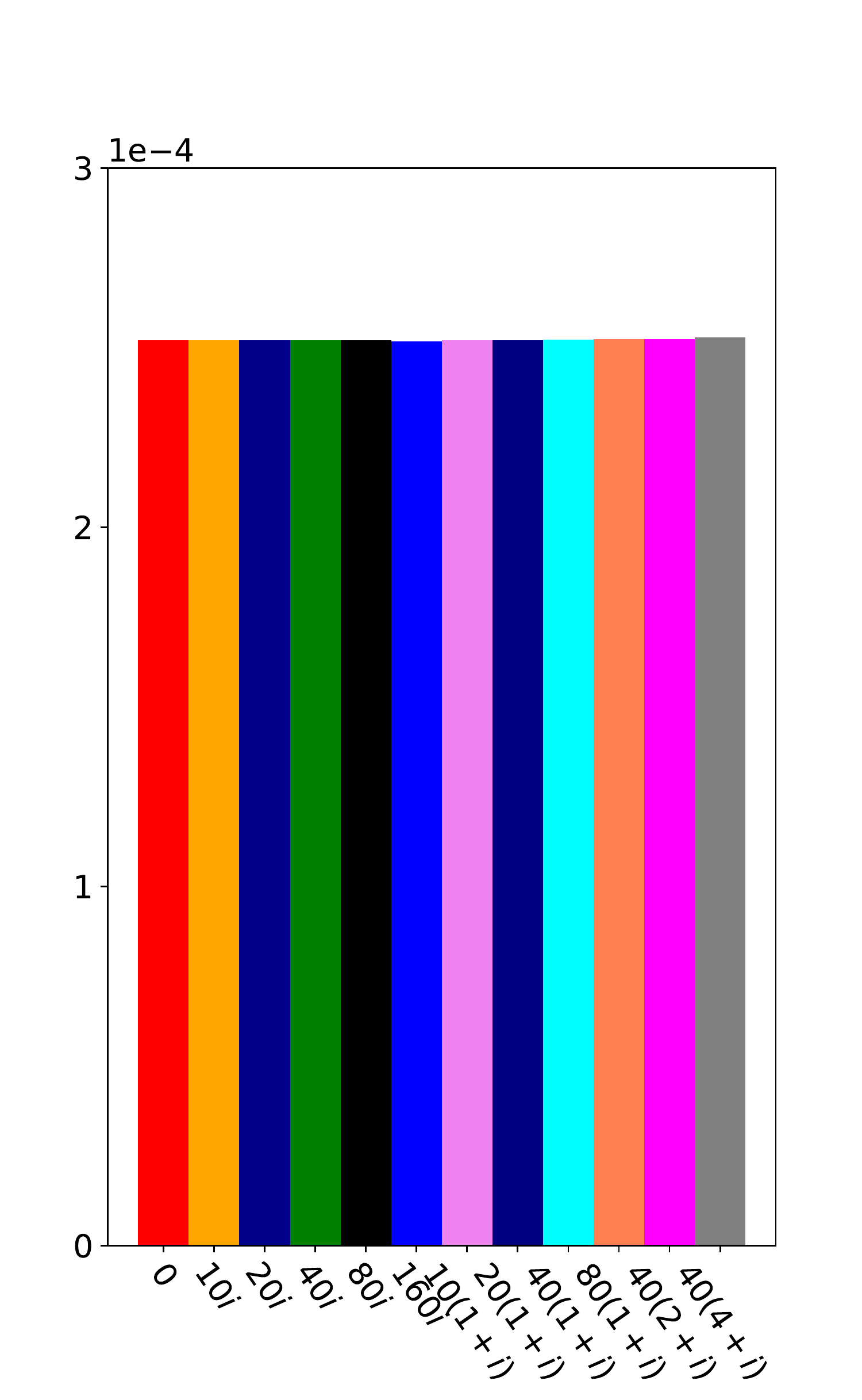} 
 \caption{$h=2^{-5}$}
 \label{fig:errors_sigpmlni_2}
\end{subfigure}
\begin{subfigure}{.19\textwidth}
 \centering
 \includegraphics[width=\textwidth]{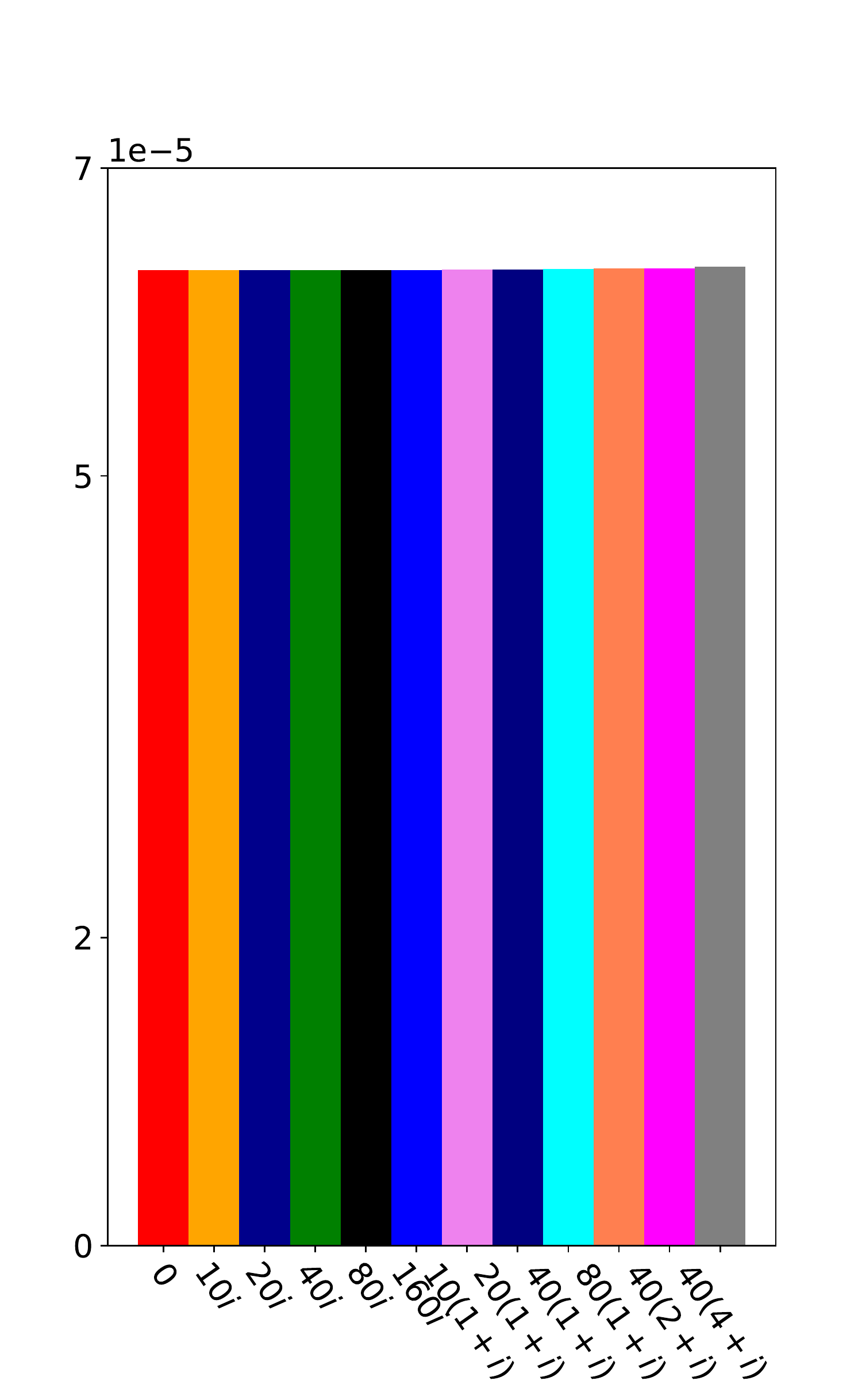} 
 \caption{$h=2^{-6}$}
 \label{fig:errors_sigpmlni_3}
\end{subfigure}
\begin{subfigure}{.19\textwidth}
 \centering
 \includegraphics[width=\textwidth]{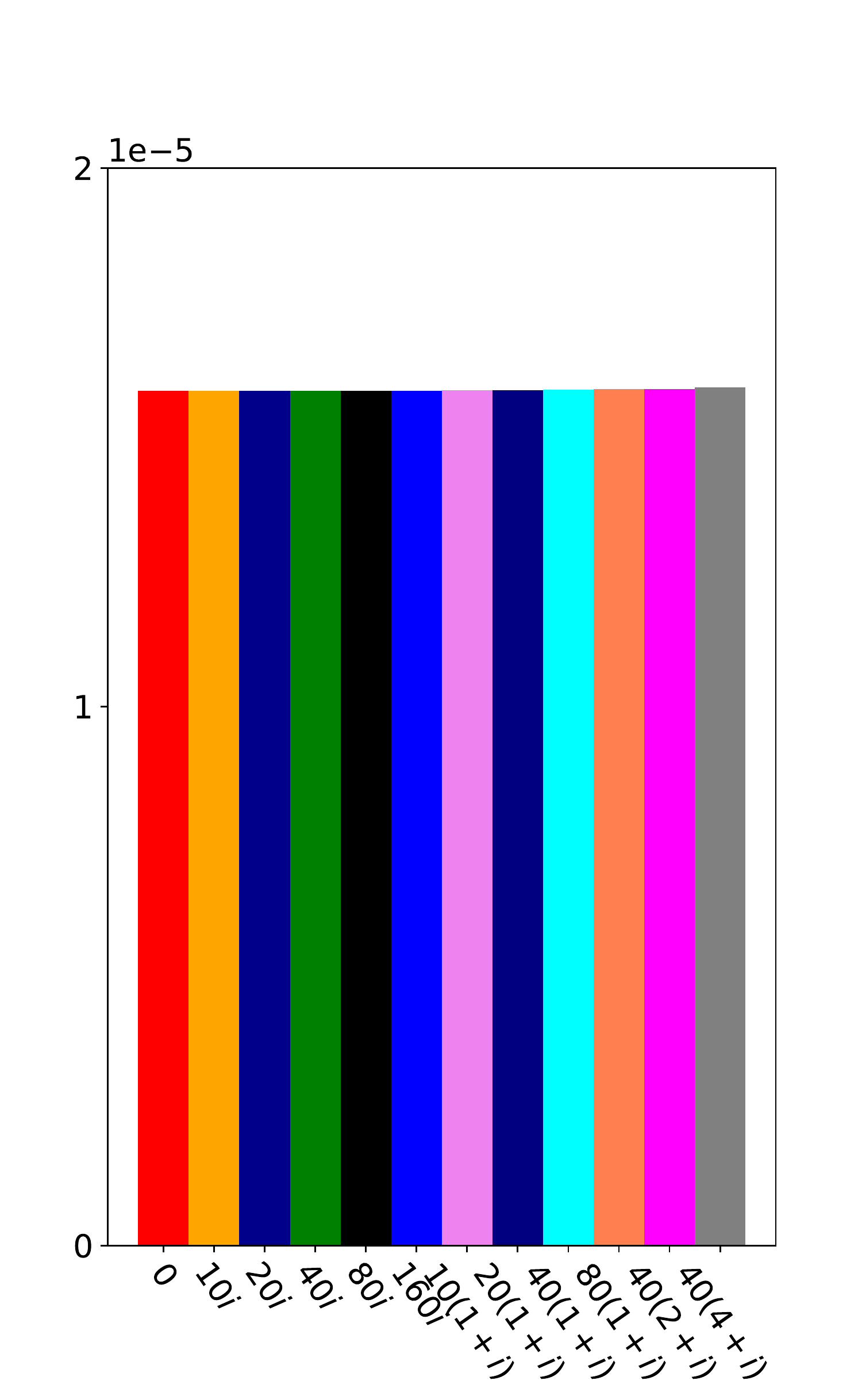} 
 \caption{$h=2^{-7}$}
 \label{fig:errors_sigpmlni_4}
\end{subfigure}
\begin{subfigure}{.19\textwidth}
 \centering
 \includegraphics[width=\textwidth]{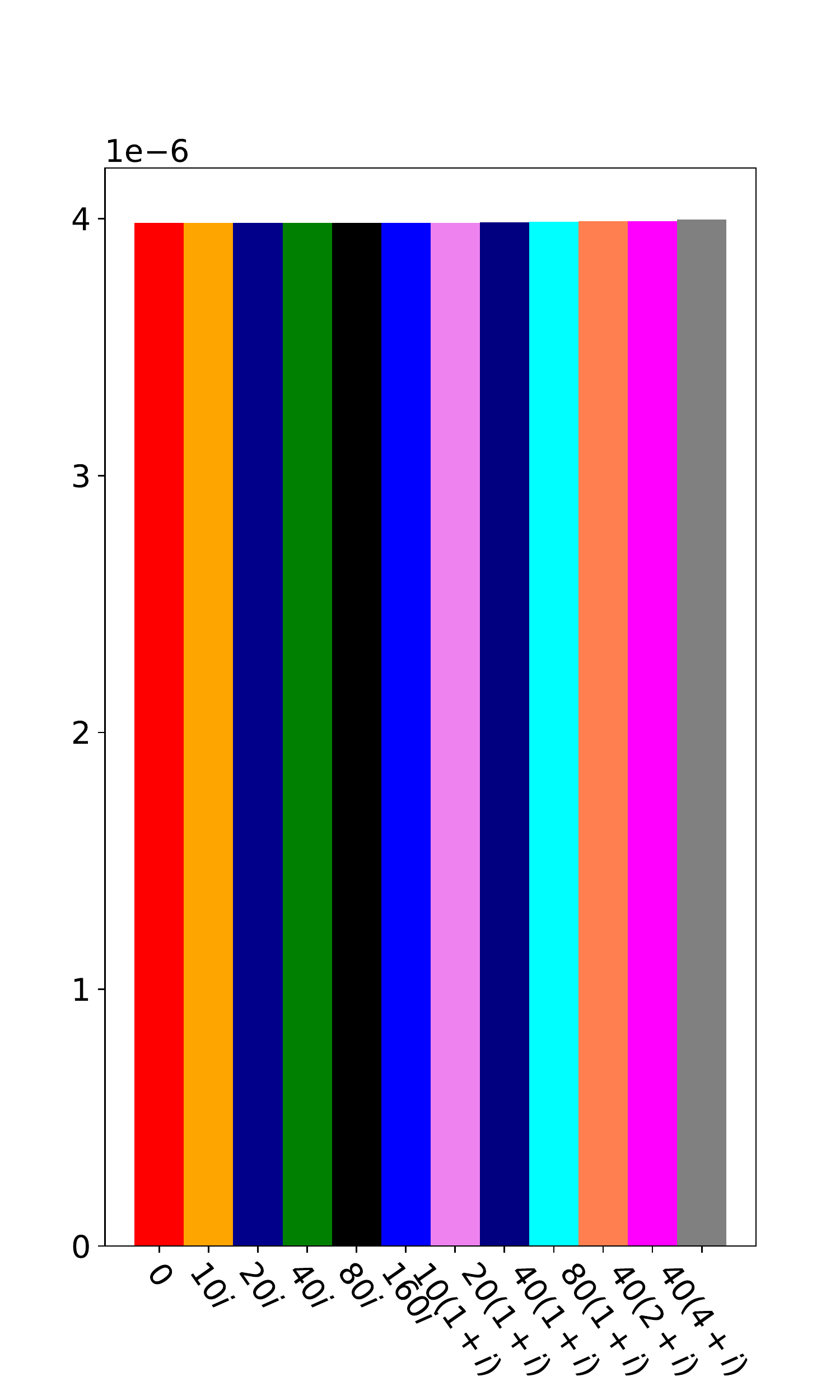} 
 \caption{$h=2^{-8}$}
 \label{fig:errors_sigpmlni_5}
\end{subfigure}
	\caption{(Example 1.1) errors for $k=2\pi$, $c_\gamma=1.1/k$ and different PML coefficients $z=0,10\i,20\i,40\i,80\i,160\i,10(1+\i),20(1+\i),40(1+\i),80(1+\i),40(2+\i),40(4+\i)$.}
\end{figure}

Table~\ref{tab:ex1p1errors} shows the errors for different $k$ and $\delta$ and the second-order convergence rate for the numerical scheme, where the PML coefficient is taken as $z=40\i$.

\begin{table}[htbp]
\centering
\begin{tabular}{|c|c|c|c|c||c|c|c|c|}
\hline
 & \multicolumn{4}{c||}{$c_\gamma=0.9/k$}  & \multicolumn{4}{c|}{$c_\gamma=1.1/k$}\\
 \hline
\diagbox{$h$}{$k$} & $2\pi$ & Order & $4\pi$ & Order & $2\pi$ & Order & $4\pi$ & Order\\
\hline
$2^{-5}$ & 2.05e-02 & -- &8.14e-02 & -- & 2.52e-04 & -- &3.43e-04 & -- \\
\hline
$2^{-6}$ &  5.29e-03 &1.96 &2.02e-02 & 2.01 & 6.33e-05 &1.99 &8.87e-05 & 1.99\\
\hline
$2^{-7}$ & 1.35e-03 &1.97 &4.92e-03 & 2.04 & 1.58e-05 &2.00& 2.23e-05 & 2.00\\
\hline
$2^{-8}$ & 3.54e-04 &1.94 &1.22e-03 & 2.01 & 3.96e-06 &2.00& 5.60e-06& 2.00\\
\hline
\end{tabular}
\caption{(Example 1.1) $L^2$-errors of numerical solutions with the PML coefficient $z=40(1+\i)$.} \label{tab:ex1p1errors}
\end{table}

\textbf{Example 1.2.} We now consider a fractional Helmholtz equation
\begin{align*}
(-\Delta)^s u(x) - k^2u(x) =& f(x),\quad (0<s<1),
\end{align*}
where
\begin{align*}
(-\Delta)^s =& C(s)\ \mathrm{p.v.}\int_\R \frac{u(x)-u(y)}{|x-y|^{1+2s}}\dy,\quad C(s)=\frac{2^{2s}s\Gamma(s+\frac{1}{2})}{\pi^\frac12\Gamma(1-s)}.
\end{align*}
We use this example to investigate the performance of the PML for the nonintegrable kernel.

The source function is taken as the Gaussian function \eqref{GSF}, and the parameters are taken by $l=d_{pml}=10$, $h=2^{-10}$ and $z = 10(15+\i)$ in the simulations. Figure~\ref{fig:ex3solutions} plots the reference solutions and numerical solutions for $s=1/2$ and $k=2\pi/10,32\pi/10$, respectively. It can be observed that the real part to $z$ makes the wave oscillate faster in the PML region, which possibly comes at a price by exacerbating the numerical reflections. Table~\ref{tab:ex1p3errors} lists the errors and convergence orders for $s=1/4,1/2$ and different $k$.

\begin{figure}[htbp]
\centering
\begin{subfigure}{.48\textwidth}
 \centering
 \includegraphics[width=0.49\textwidth]{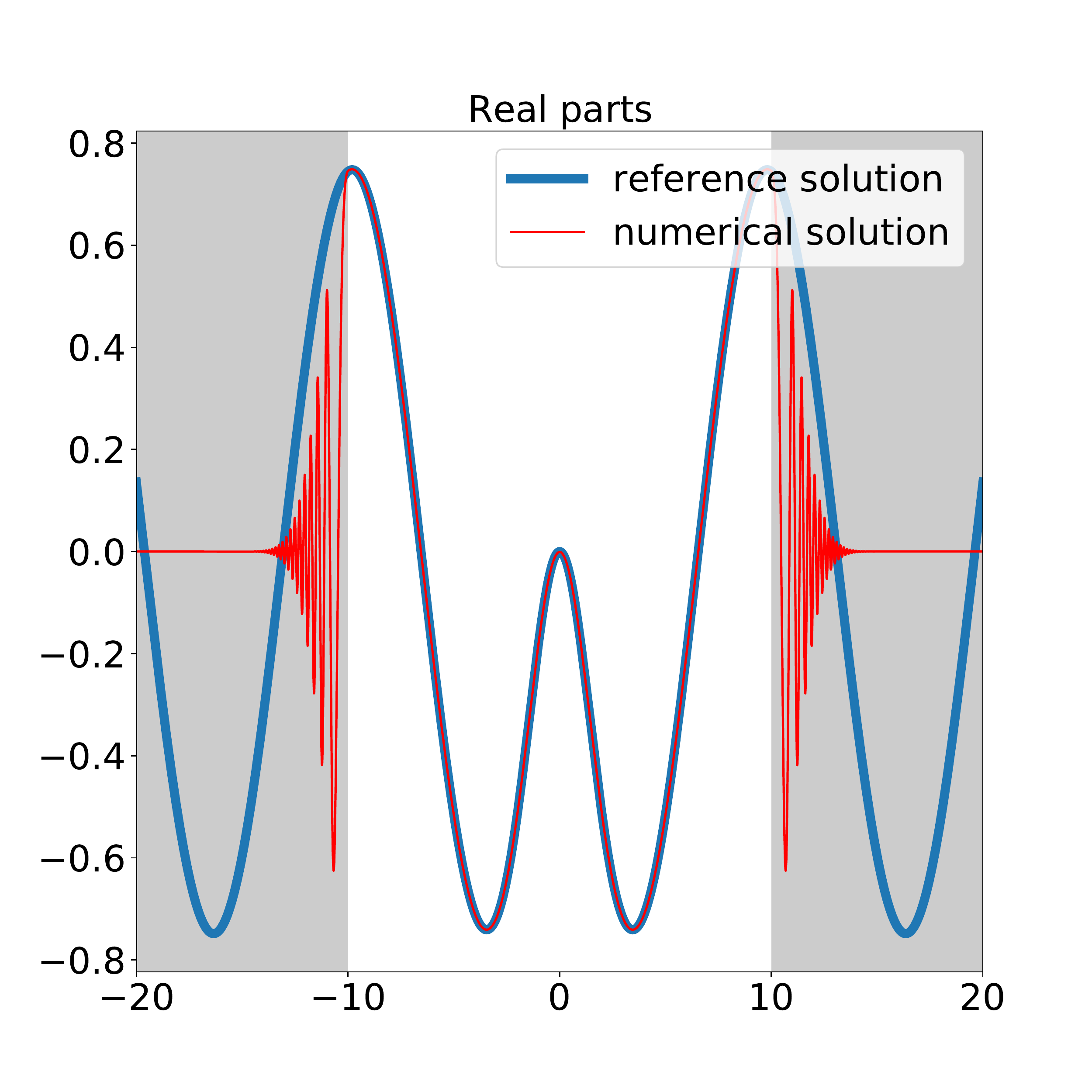}
 \includegraphics[width=0.49\textwidth]{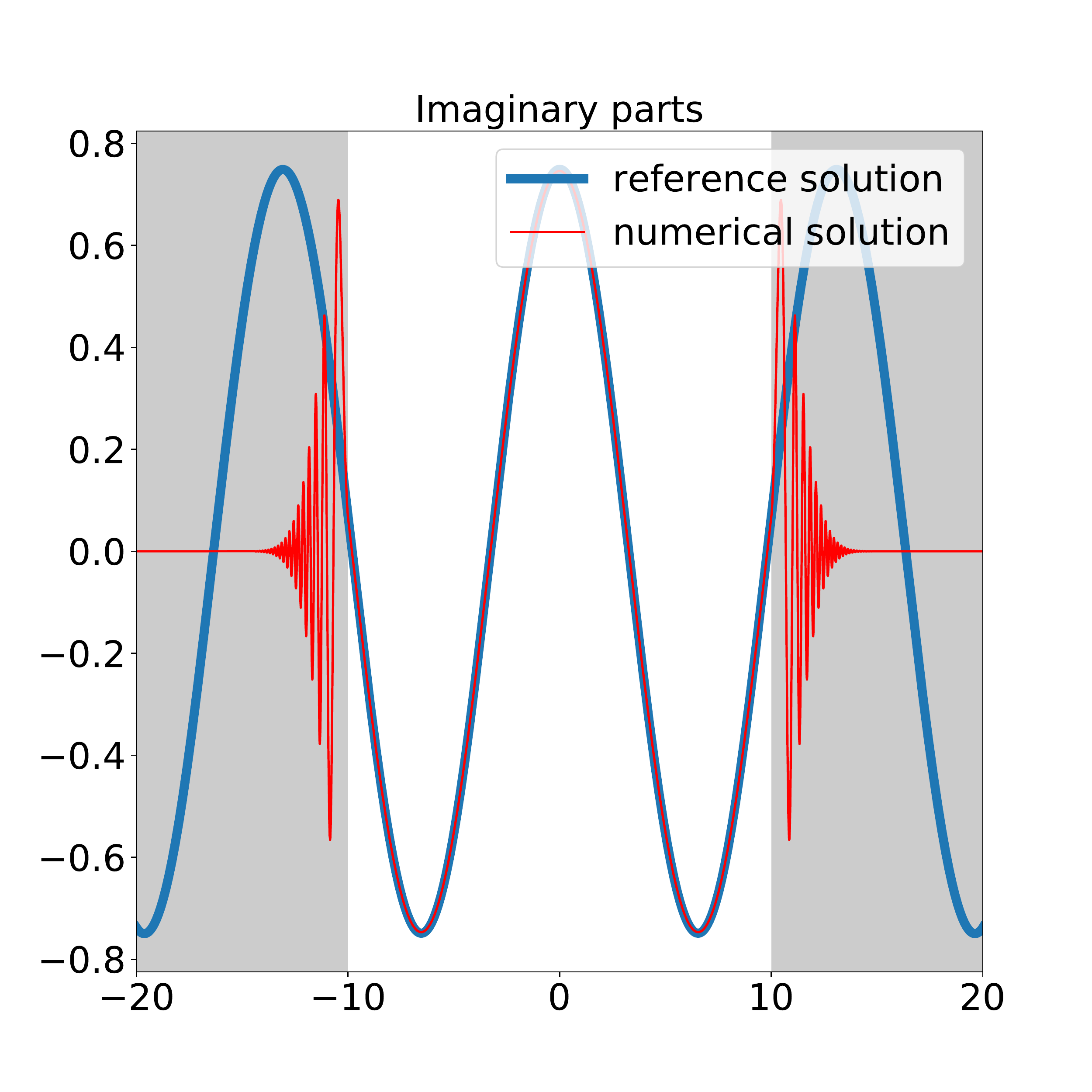}
 \caption{$s=1/2,k=2\pi/10$}
 \label{fig:ex3solutionsa}
\end{subfigure}
\begin{subfigure}{.48\textwidth}
 \centering
 \includegraphics[width=0.49\textwidth]{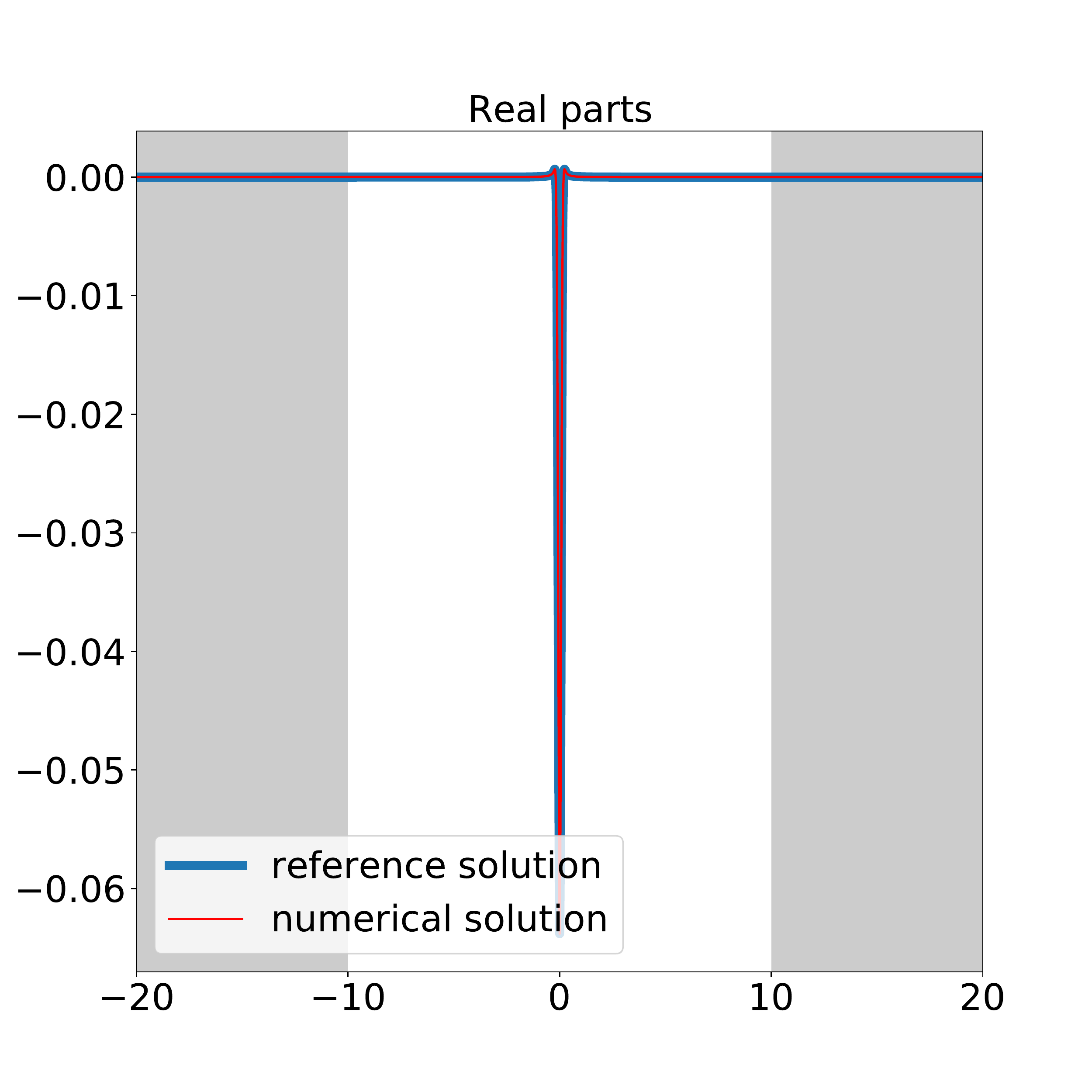}
 \includegraphics[width=0.49\textwidth]{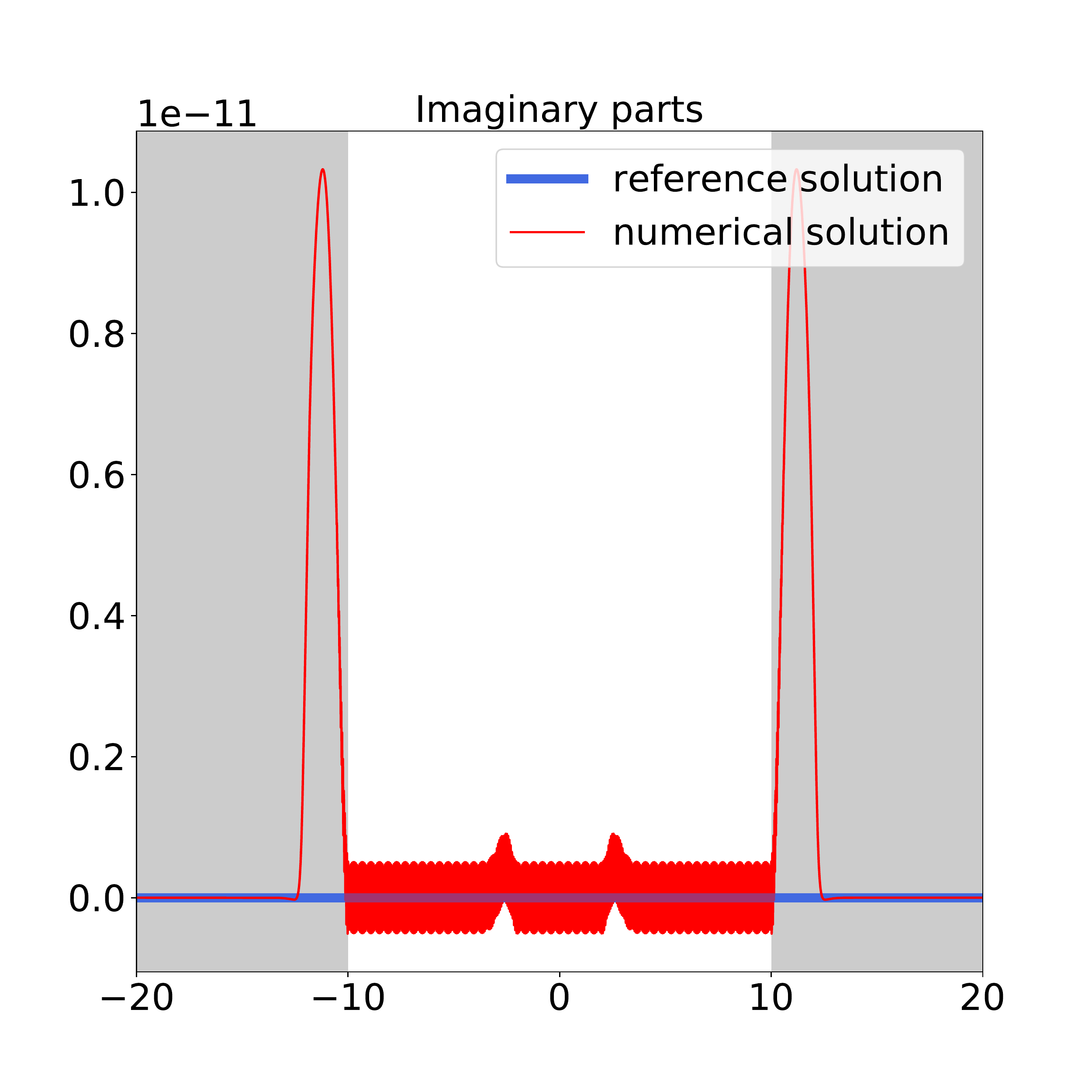}
 \caption{$s=1/2,k=32\pi/10$}
 \label{fig:ex3solutionsb}
\end{subfigure}
	\caption{Example 1.2: the reference solution and numerical solution $\hat{\tilde u}_h$ for $s=1/2$ and $k=2\pi/10,32\pi/10$, respectively. $\hat{\tilde u}_h$ is obtained on the mesh with $h=2^{-10}$. The PML region for numerical solutions are shaded in light grey.} \label{fig:ex3solutions}
\end{figure}

\begin{table}[htbp]
\centering
\begin{tabular}{|c|c|c|c|c||c|c|c|c|}
\hline
 $s$ & \multicolumn{4}{c||}{$1/4$} & \multicolumn{4}{c|}{$1/2$}  \\
 \hline
\diagbox{$h$}{$k$} & $2\pi/10$ & Order & $32\pi/10$ &Order & $2\pi/10$ & Order & $32\pi/10$ & Order \\
\hline
$2^{-5}$ &2.11e-01 & -- &2.53e-05 & -- &1.62e-01 & -- & 1.61e-04 & -- \\
\hline
$2^{-6}$ & 5.35e-03 & 5.30 & 4.51e-06 & 2.49 & 1.32e-02 & 3.62 & 4.97e-05 & 1.69 \\
\hline
$2^{-7}$ & 1.35e-03 & 1.98 & 9.71e-07& 2.22 & 3.22e-03 & 2.04 & 9.28e-06 & 2.42\\
\hline
$2^{-8}$ &2.80e-04 & 2.27 & 1.89e-07 & 2.36 & 8.62e-04 & 1.90 & 2.09e-06 & 2.15\\
\hline
\end{tabular}
\caption{(Example 1.2) $L^2$-errors for $s=1/4,1/2$ and different $k$. 
} \label{tab:ex1p3errors}
\end{table}

\textbf{Example 1.3.} The source function is taken as the Gaussian function \eqref{GSF}, the parameters are taken by $l=d_{pml}=1$, $\delta=1/4$, $z=40(10+\i)$, and the discontinuous kernel is taken by 
$$\gamma(x,y) = \frac{3}{\delta^3}\chi_{[-\delta,\delta]}(|x-y|).$$ 
Since this kernel is discontinuous at points $|x-y|=\delta$, a new difficulty is how to analytically continue the kernel into complex plane. To overcome the difficulty, we first approximate $\chi_{[-1,1]}(s)$ by a smooth function
\begin{align}
\hat\chi_{[-1,1]}(s) = \frac12\Big( \frac{e^{-\tau(|s|-1)}-1}{e^{-\tau(|s|-1)}+1} +1\Big)\ \mathrm{ with }\ \tau=-\frac{\ln tol}{\epsilon_0}. \label{eq:estsmofunc}
\end{align}
It is clear that $\hat\chi_{[-1,1]}(s)$ converges to $\chi_{[-1,1]}(s)$ as $tol$ and $\epsilon_0$ go to zero (Figure~\ref{fig:ex1p4kernel}). In practical simulations, we replace the kernel $\gamma(x,y) = \frac{3}{\delta^3}\chi_{[-\delta,\delta]}(|x-y|)$ by
\begin{align*}
\hat\gamma(x,y) = \frac{3}{\delta^3} \hat\chi_{[-1,1]}\big(\frac{|x-y|}{\delta}\big)
\end{align*}
with $tol=\epsilon_0=0.01$.

\begin{figure}[htbp]
\centering
	\includegraphics[width=0.9\textwidth]{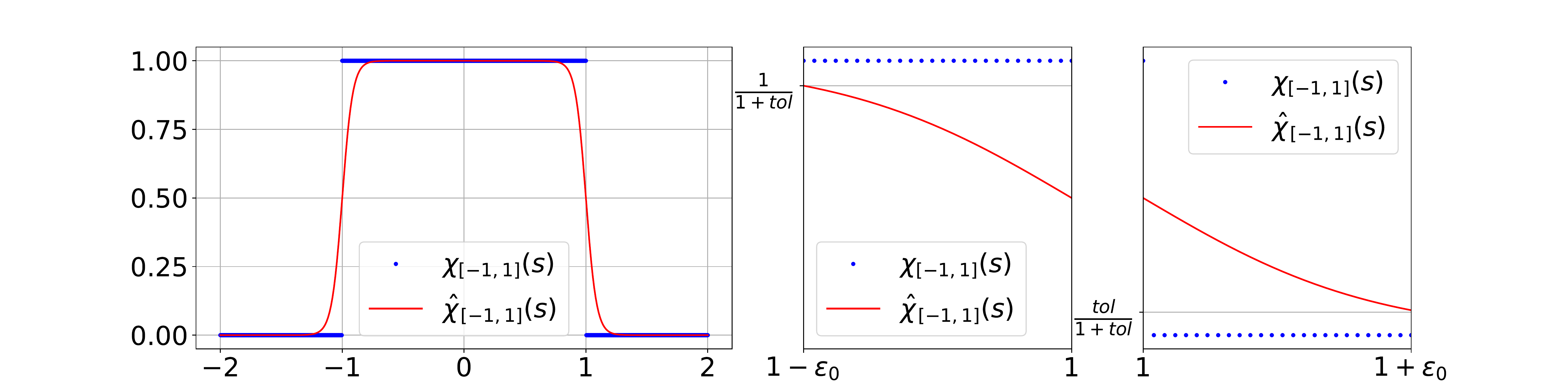}
	\caption{Example 1.4: $\chi_{[-1,1]}(s)$ and $\hat\chi_{[-1,1]}(s)$.} \label{fig:ex1p4kernel}
\end{figure}

Figure~\ref{fig:ex1p4solutions} plots the solutions for $k=2\pi,4\pi$, respectively. One can see that the solutions decay exponentially outside $\Omega$ and the real part of $z$ makes the wave oscillate faster in the PML region. Table~\ref{tab:ex1p4errors} shows the errors and convergence orders by refining mesh size $h$. 

\begin{figure}[htbp]
\centering
\begin{subfigure}{.48\textwidth}
 \centering
 \includegraphics[width=0.49\textwidth]{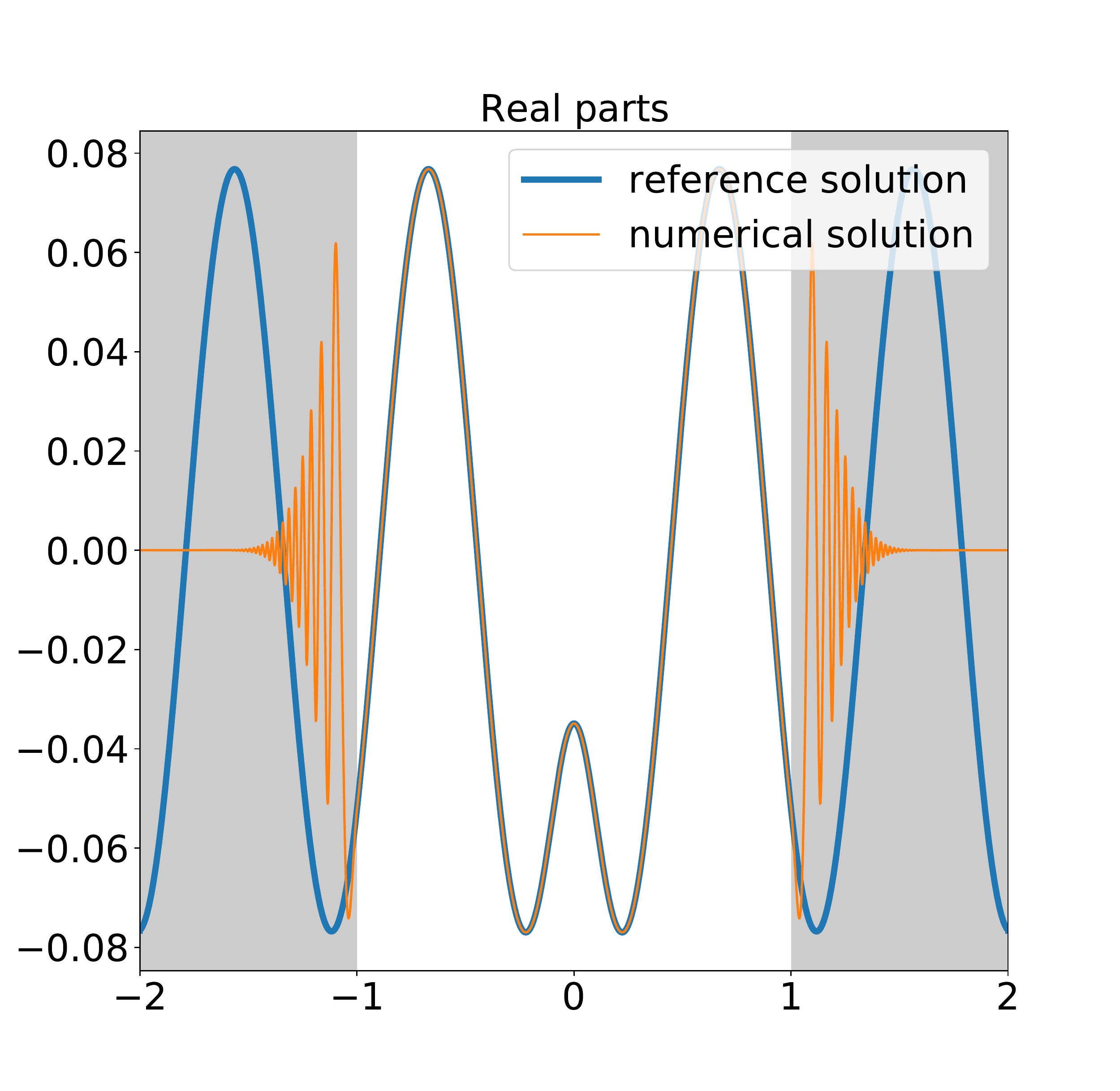}
 \includegraphics[width=0.49\textwidth]{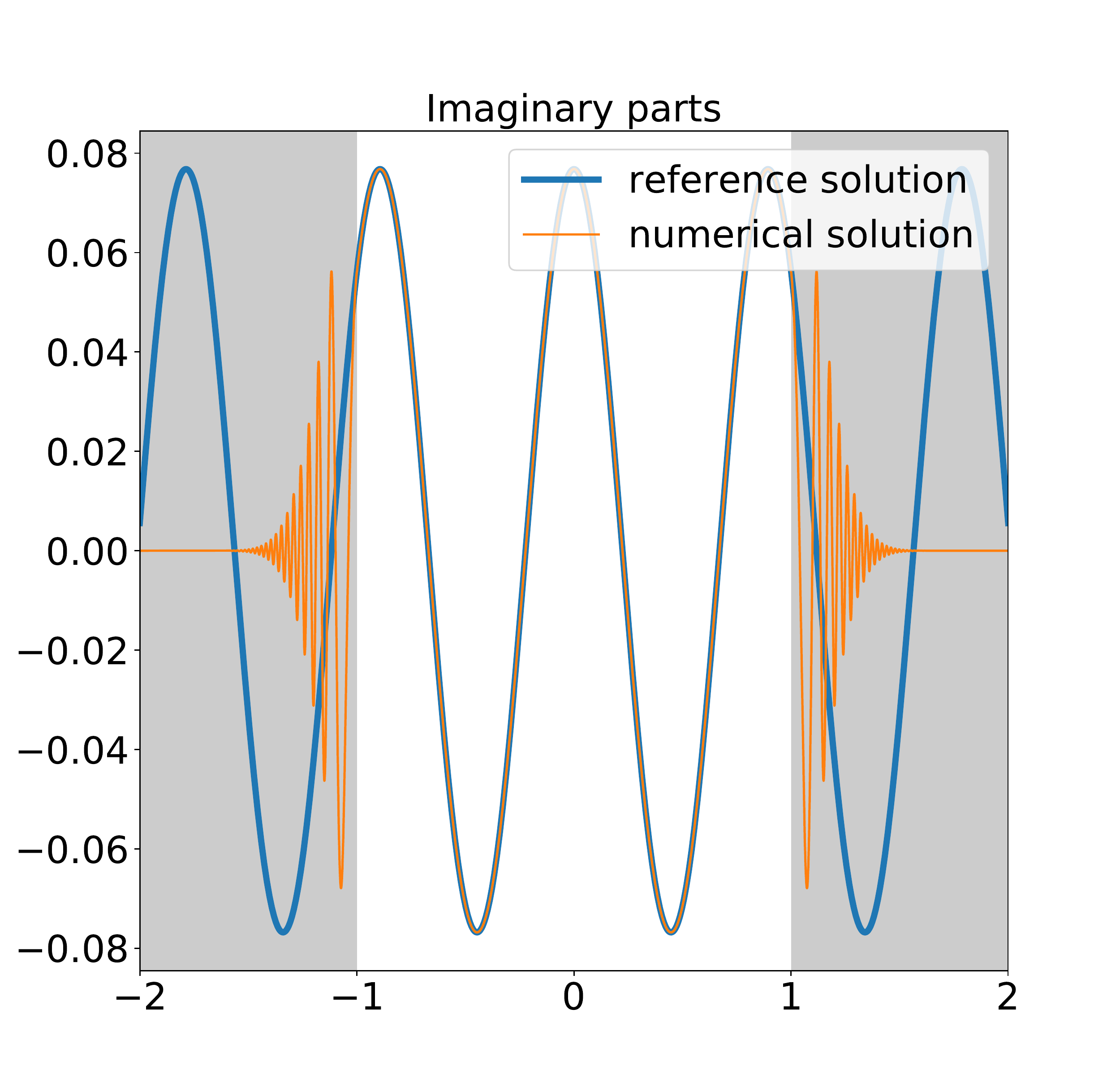}
 \caption{$k=2\pi,\delta=1/4$}
 \label{fig:ex1p4solutionsa}
\end{subfigure}
\begin{subfigure}{.48\textwidth}
 \centering
 \includegraphics[width=0.49\textwidth]{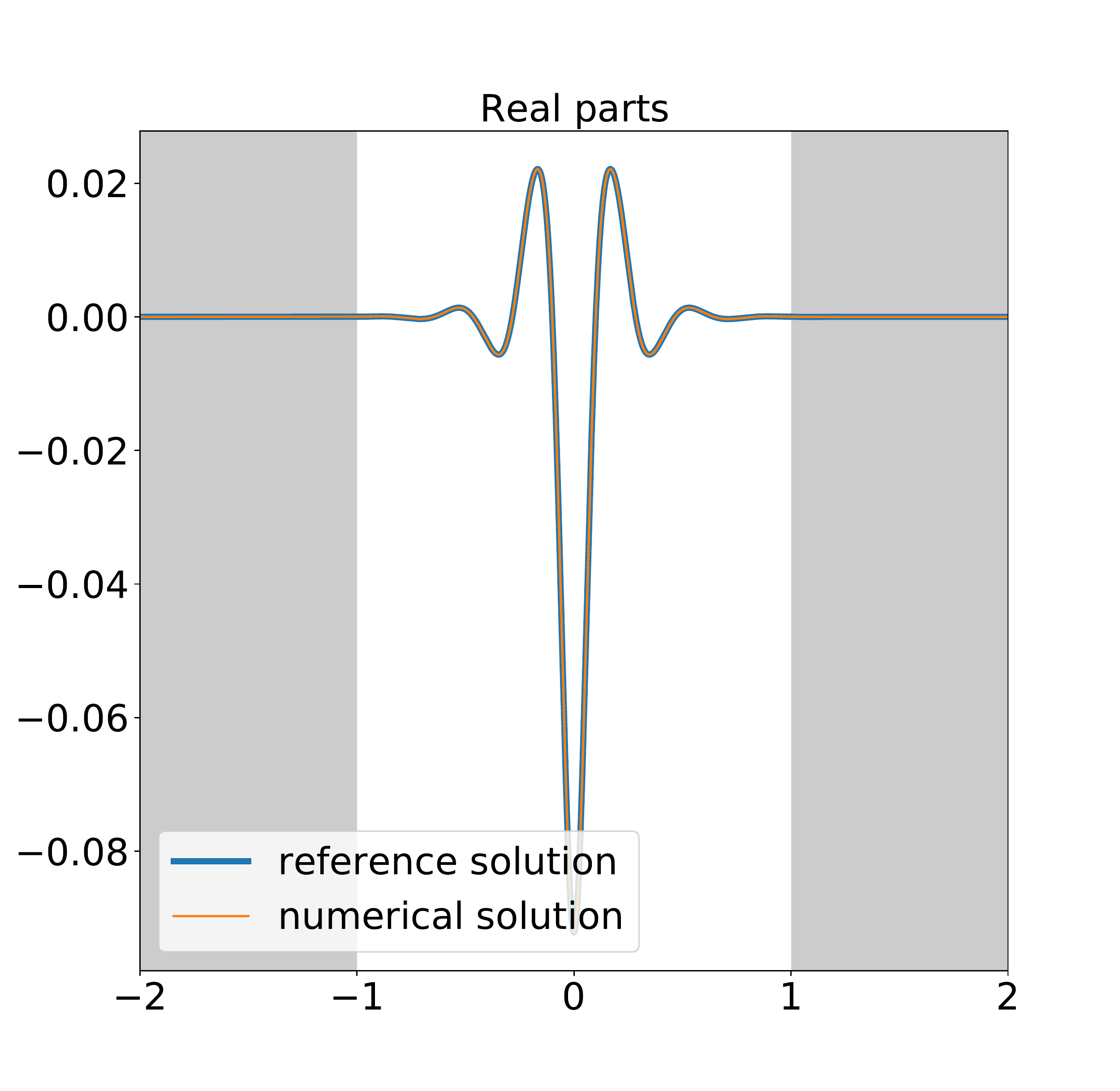}
 \includegraphics[width=0.49\textwidth]{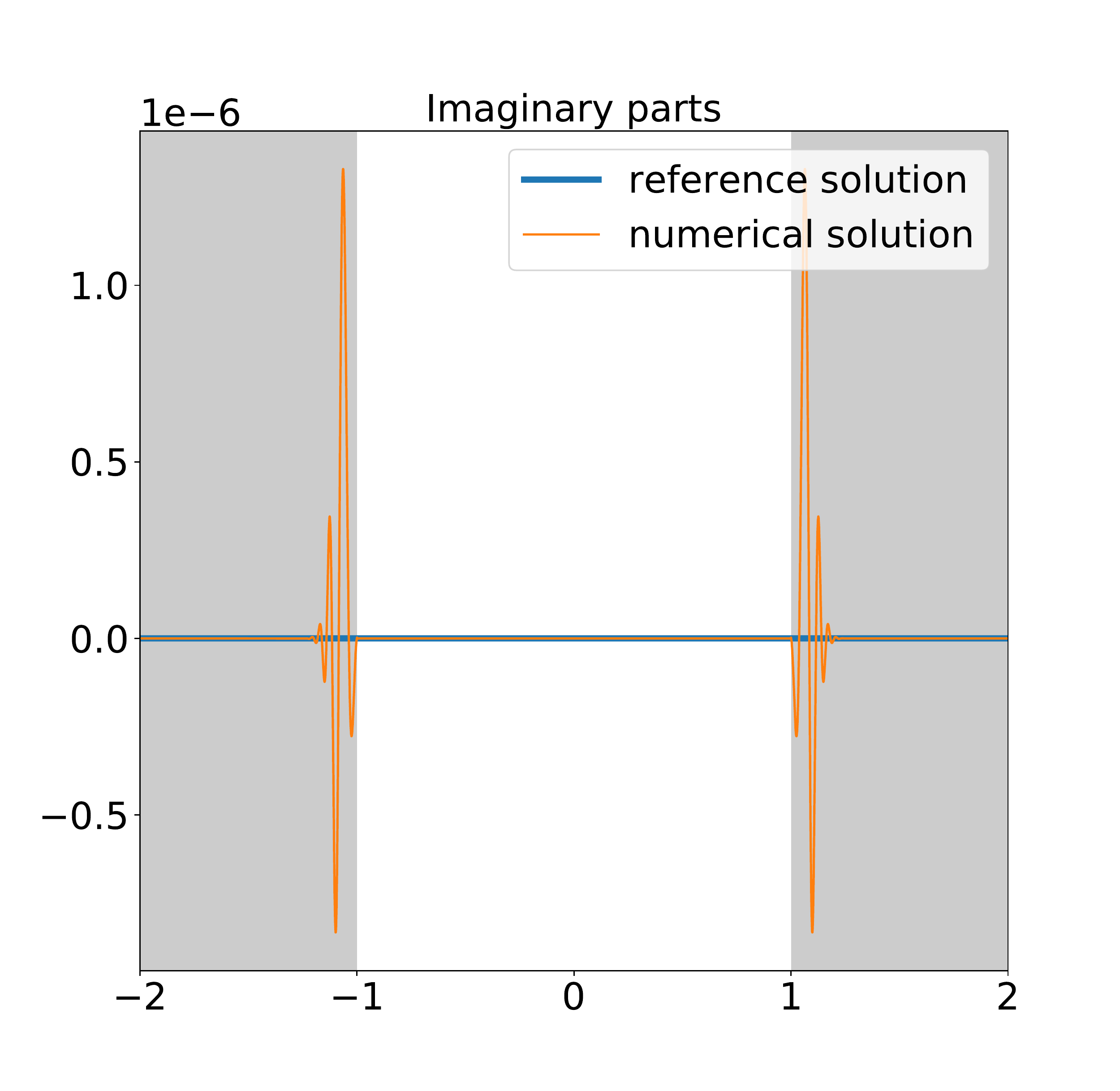}
 \caption{$k=4\pi,\delta=1/4$}
 \label{fig:ex1p4solutionsb}
\end{subfigure}
	\caption{(Example 1.3) the reference solution and numerical solution $\hat{\tilde u}_h$ for $\delta=1/4$ and $k=2\pi,4\pi$, respectively. $\hat{\tilde u}_h$ is obtained on the mesh with $D_f=2^{12}-1$. The PML region for numerical solutions are shaded in light grey.} \label{fig:ex1p4solutions}
\end{figure}

\begin{table}[htbp]
\centering
\begin{tabular}{|c|c|c|c|c|c|c|}
 \hline
\diagbox{$h$}{$k$} & $2\pi$ & Order & $4\pi$ & Order \\
\hline
$2^{-5}$ & 5.05e-02 & -- & 2.3435e-04 & -- \\
\hline
$2^{-6}$ &  6.46e-03 & 2.97 & 5.9073e-05 & 1.99\\
\hline
$2^{-7}$ & 2.55e-04 & 4.66 & 1.4621e-05 & 2.01\\
\hline
$2^{-8}$ & 7.78e-05 & 1.71 & 3.4811e-06 &2.07\\
\hline
\end{tabular}
\caption{(Example 1.3) $L^2$-errors for $k=2\pi,4\pi$ and $\delta=1/4$. 
} \label{tab:ex1p4errors}
\end{table}

\subsection{Numerical examples in two dimensions} \label{subsec:ne_2D}
Here we consider three examples. For PMLs in Cartesian coordinates, we choose $\Omega=[-l,l]^2$ and $\Omega_p$ as \eqref{eq:Omegaps}, and set
\begin{align}
\sigma_1(t) = \sigma_2(t) = \begin{cases}
0, & -l\leq t\leq l,\\
\frac{1}{d_{pml}}(|t|-l), &\quad l<|t|.
\end{cases}
\end{align}
For PMLs in polar coordinates, we choose $\Omega=\{x\in\R^2:|x|_2\leq l_r\}$ and $\Omega_p$ as \eqref{eq:Omegapc} and set
\begin{align}
\sigma_r(t) = \begin{cases}
0, & 0\leq t\leq l_r,\\
\frac{1}{d_{pml}}(t-l_r), & l_r<t.
\end{cases}
\end{align}

Figure~\ref{fig:2Dmesh} illustrates the two-dimensional meshes for PMLs in Cartesian coordinates and polar coordinates, respectively.
\begin{figure}[htbp]
\centering
	\includegraphics[width=0.4\textwidth]{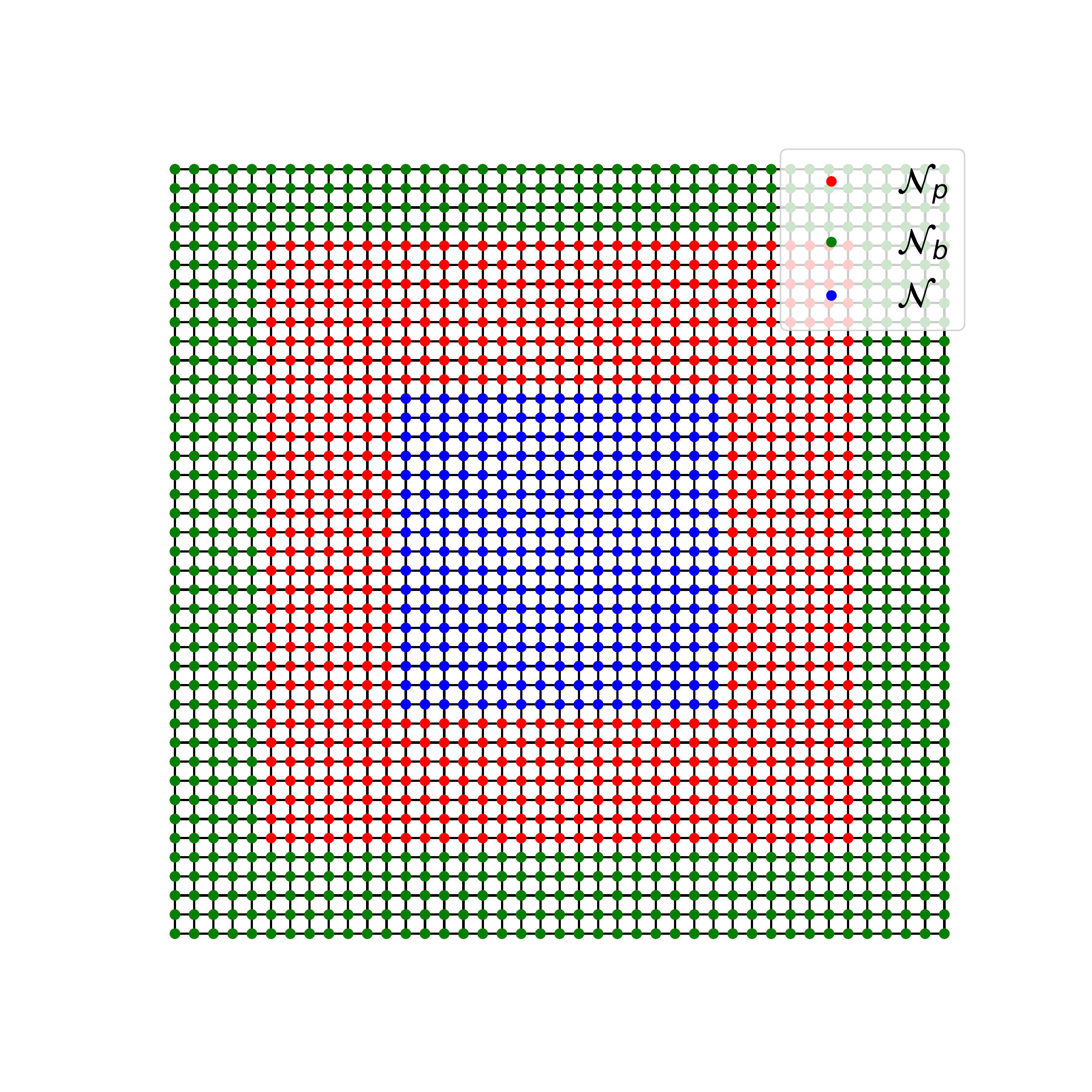}
	\includegraphics[width=0.4\textwidth]{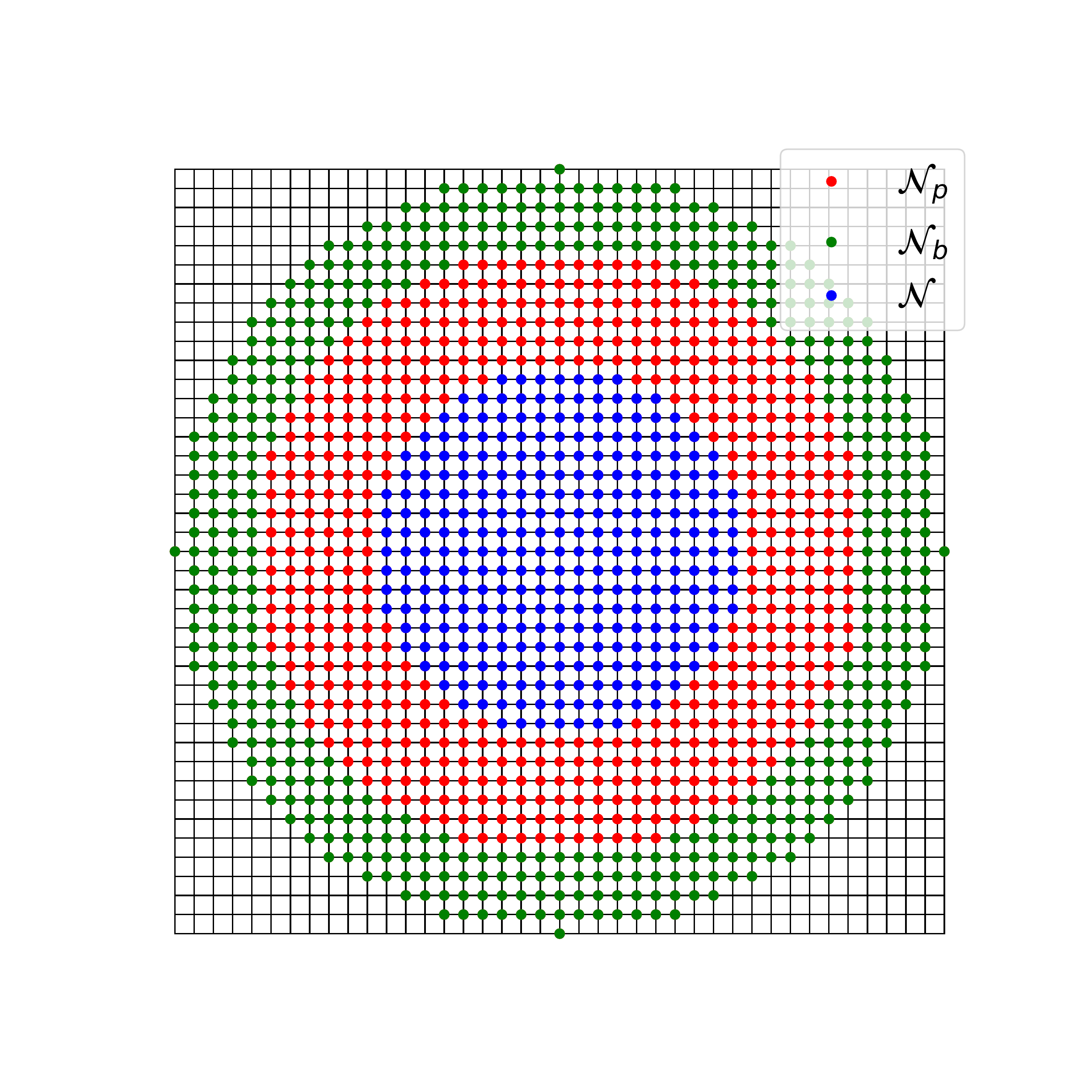}
	\caption{Two dimensional meshes for PMLs in Cartesian coordinates (left) and polar coordinates (right). $\mathcal{N}_p$, $\mathcal{N}_b$ and $\mathcal{N}$ are the sets of points with indices in $\mathcal{I}_p$, $\mathcal{I}_b$ and $\mathcal{I}$, respectively.} \label{fig:2Dmesh}
\end{figure}

\textbf{Example 2.1}: We first consider the exponential kernel $\gamma(x,y)=\frac{1}{3\pi c_\gamma^4} e^{-\frac{|x-y|_2}{c_\gamma}}$. In the simulations, we set PML coefficient $z=20\i$, $l_r=l=d_{pml}=1$, and take the source function
 \begin{align}
 f(x) = 2\sqrt{\pi} e^{-4\pi^2|x|^2}. \label{eq:ex2p1source}
 \end{align}
 
Figure~\ref{fig:ex2p1solutions} plots the numerical solutions and the projections of their contours on the `walls' of graphs for each dimension. One can see that, for $k=2\pi$ and $c_\gamma=0.1/k$, numerical solutions by PMLs in both Cartesian coordinates and polar coordinates oscillate in $\Omega$ and decay exponentially in the PML layers. For $k=20\pi$ and $c_\gamma=1/k$, the solutions themself decay outside the support of $f(x)$, but PMLs make their imaginary parts oscillate at a amplitude of 2e-23 in $\Omega$. Table~\ref{tab:ex2p1errors} shows the errors and convergence orders by refining mesh size $h$. 

\begin{figure}[htbp]
\centering
\begin{subfigure}{.48\textwidth}
 \centering
	\includegraphics[width=0.49\textwidth]{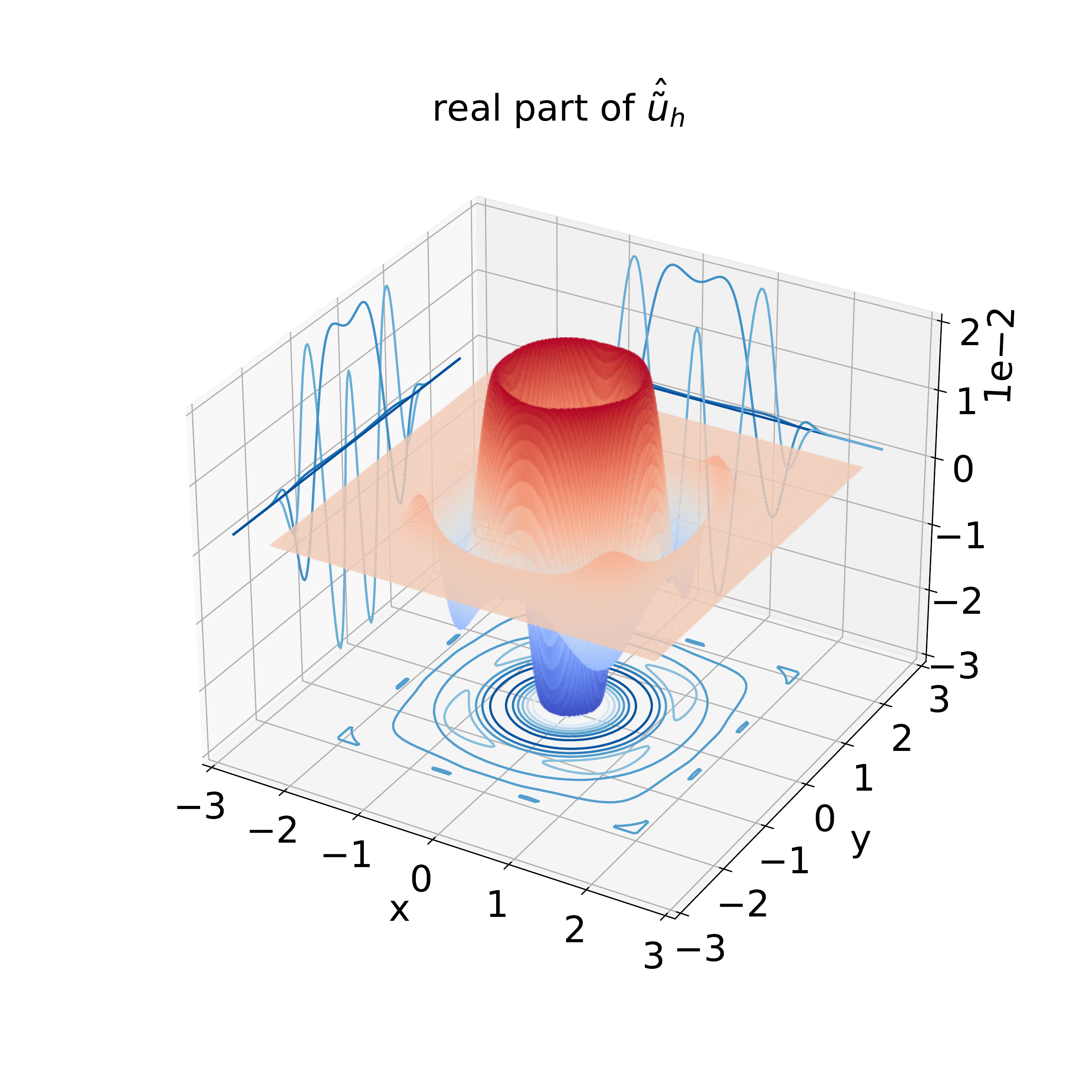}
	\includegraphics[width=0.49\textwidth]{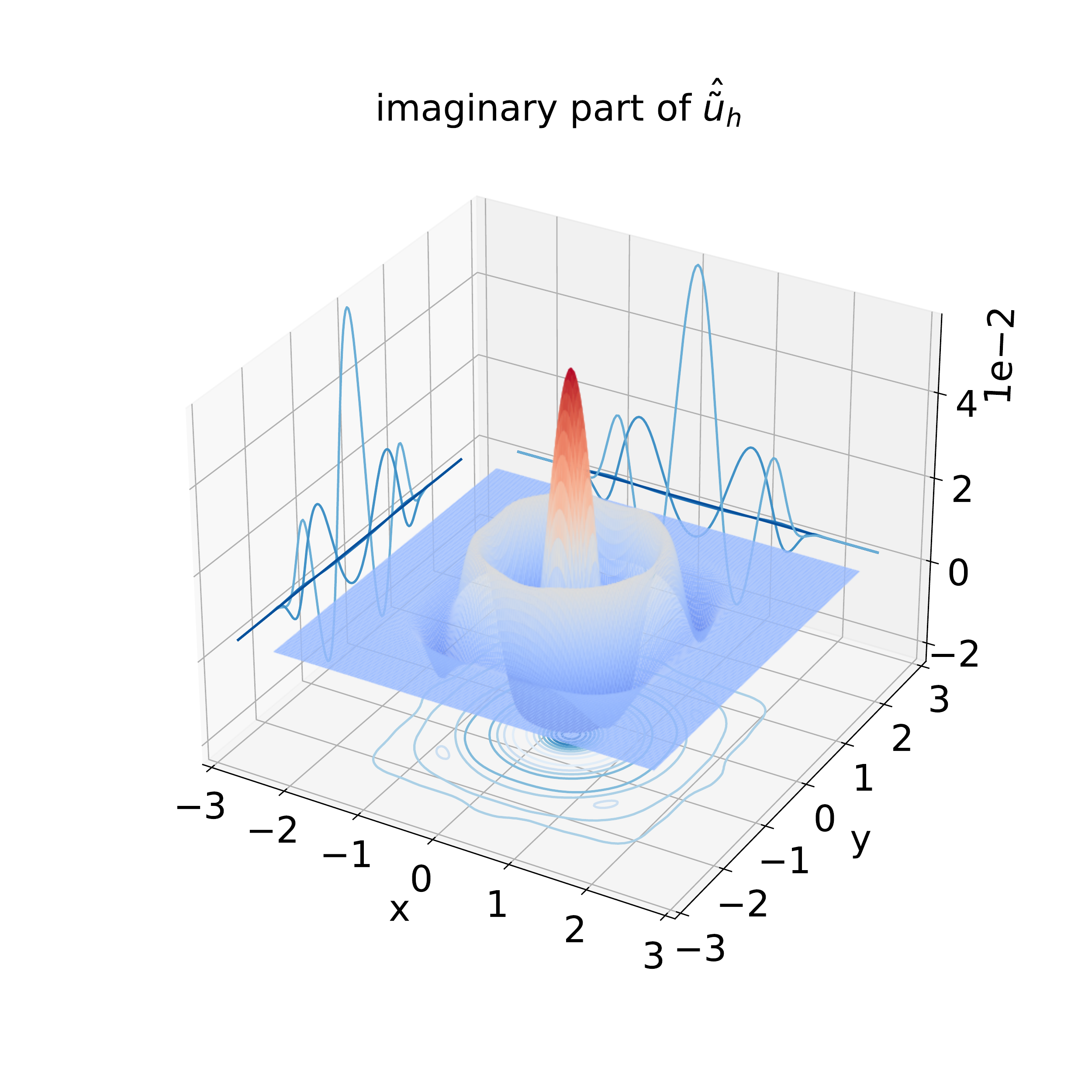}
 \caption{$k=2\pi,c_\gamma=0.1/k$ in Cartesian coordinates}
 \label{fig:ex2p1solutionsa}
\end{subfigure}
\begin{subfigure}{.48\textwidth}
 \centering
	\includegraphics[width=0.49\textwidth]{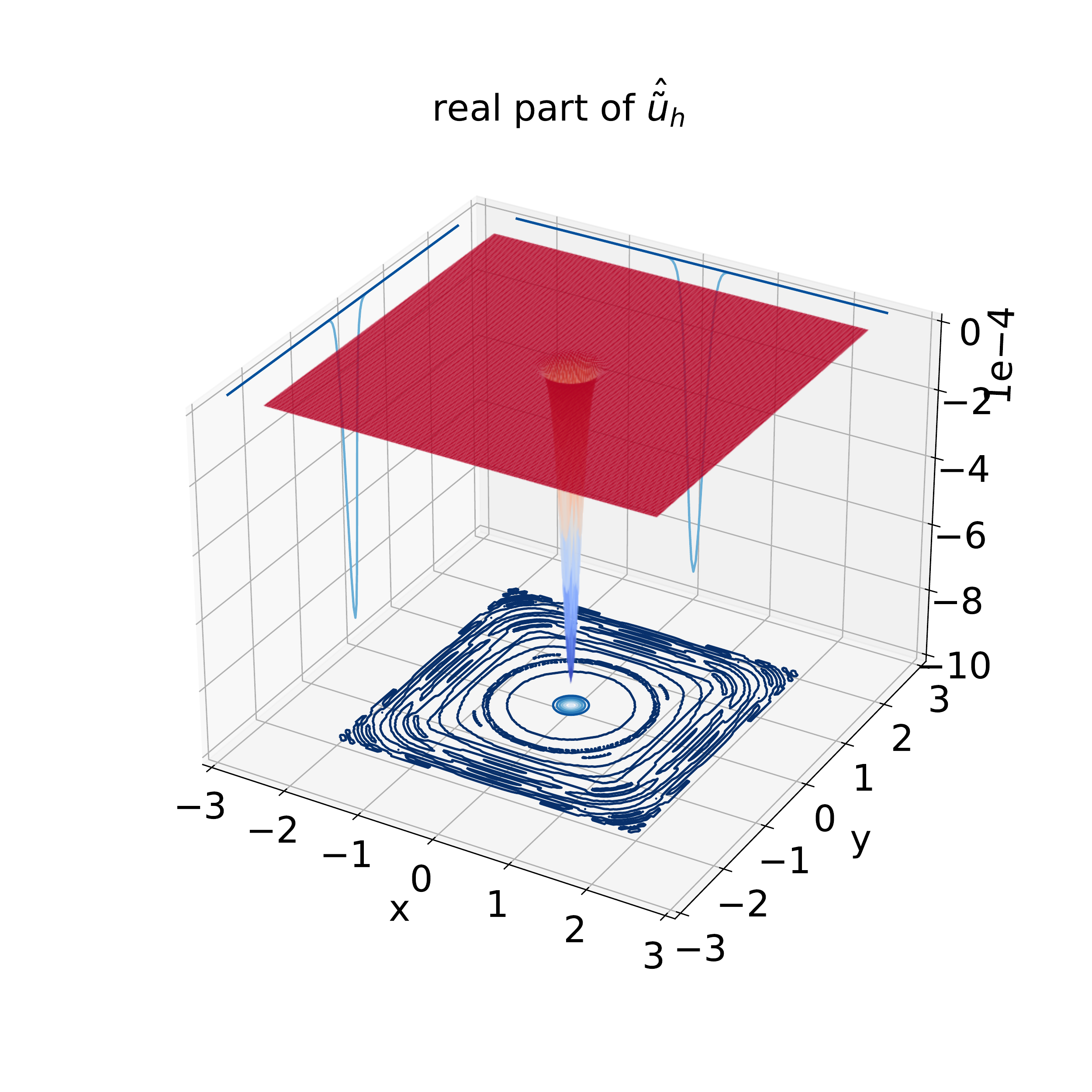}
	\includegraphics[width=0.49\textwidth]{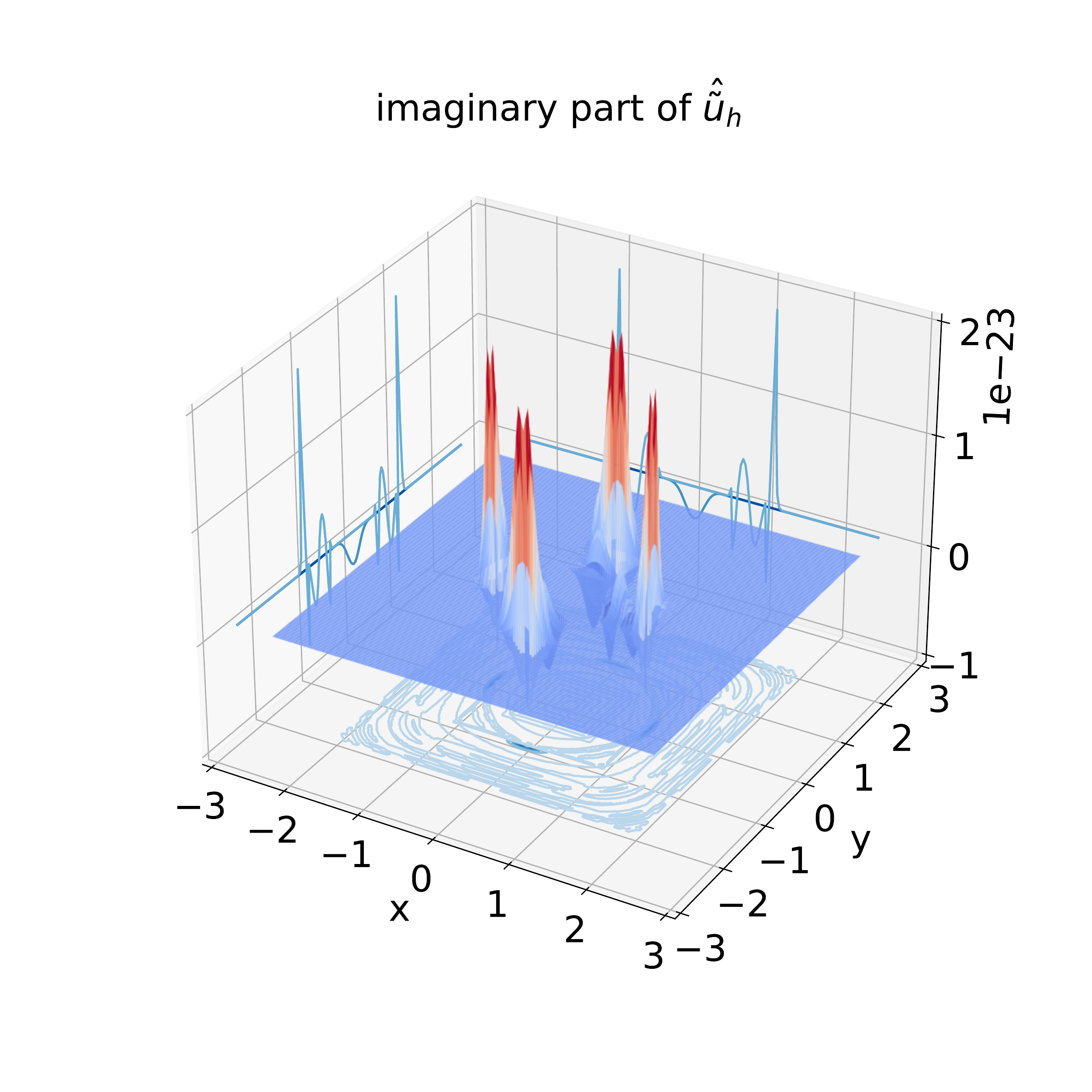}
 \caption{$k=20\pi,c_\gamma=1/k$ in Cartesian coordinates}
 \label{fig:ex2p1solutionsb}
\end{subfigure}
\begin{subfigure}{.48\textwidth}
 \centering
	\includegraphics[width=0.49\textwidth]{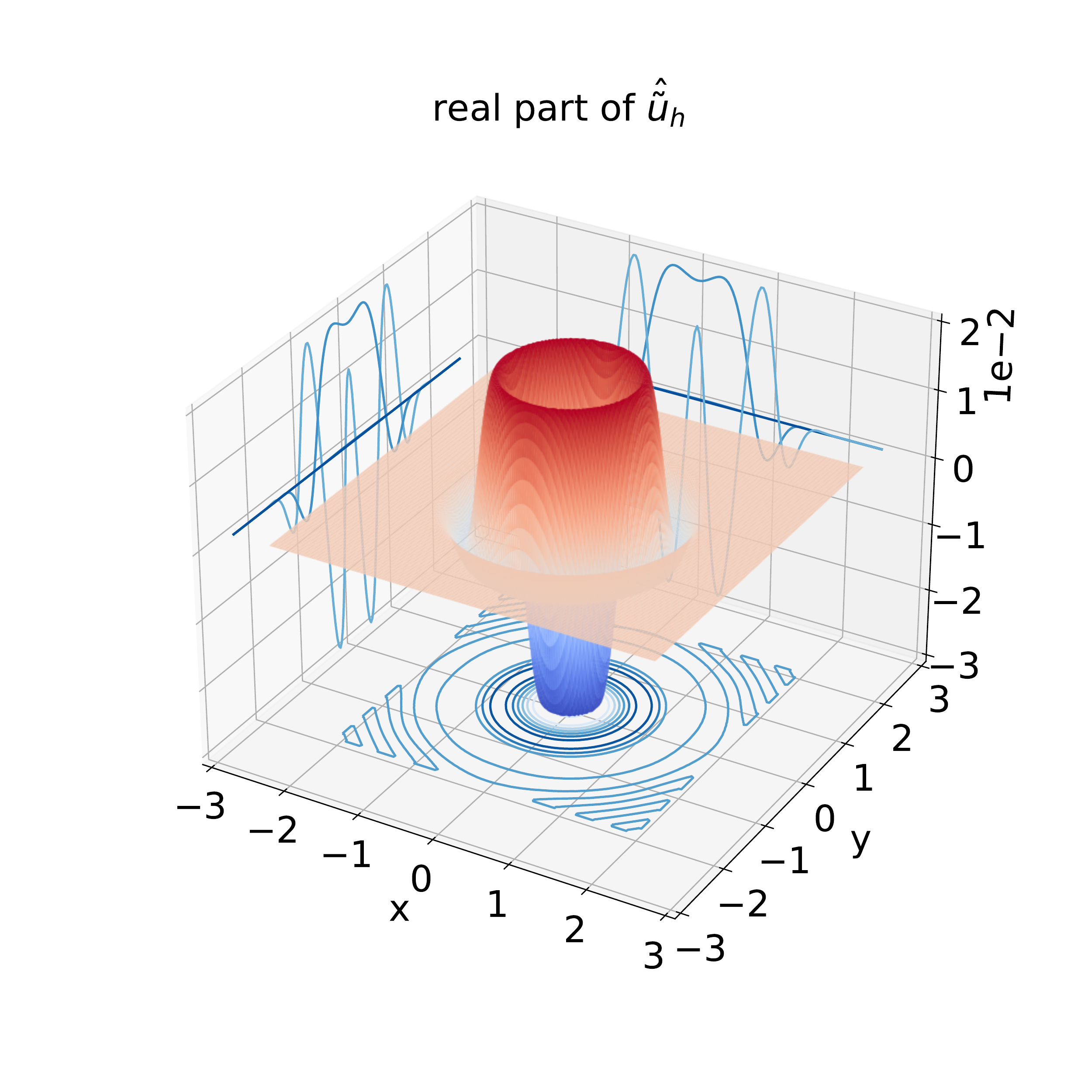}
	\includegraphics[width=0.49\textwidth]{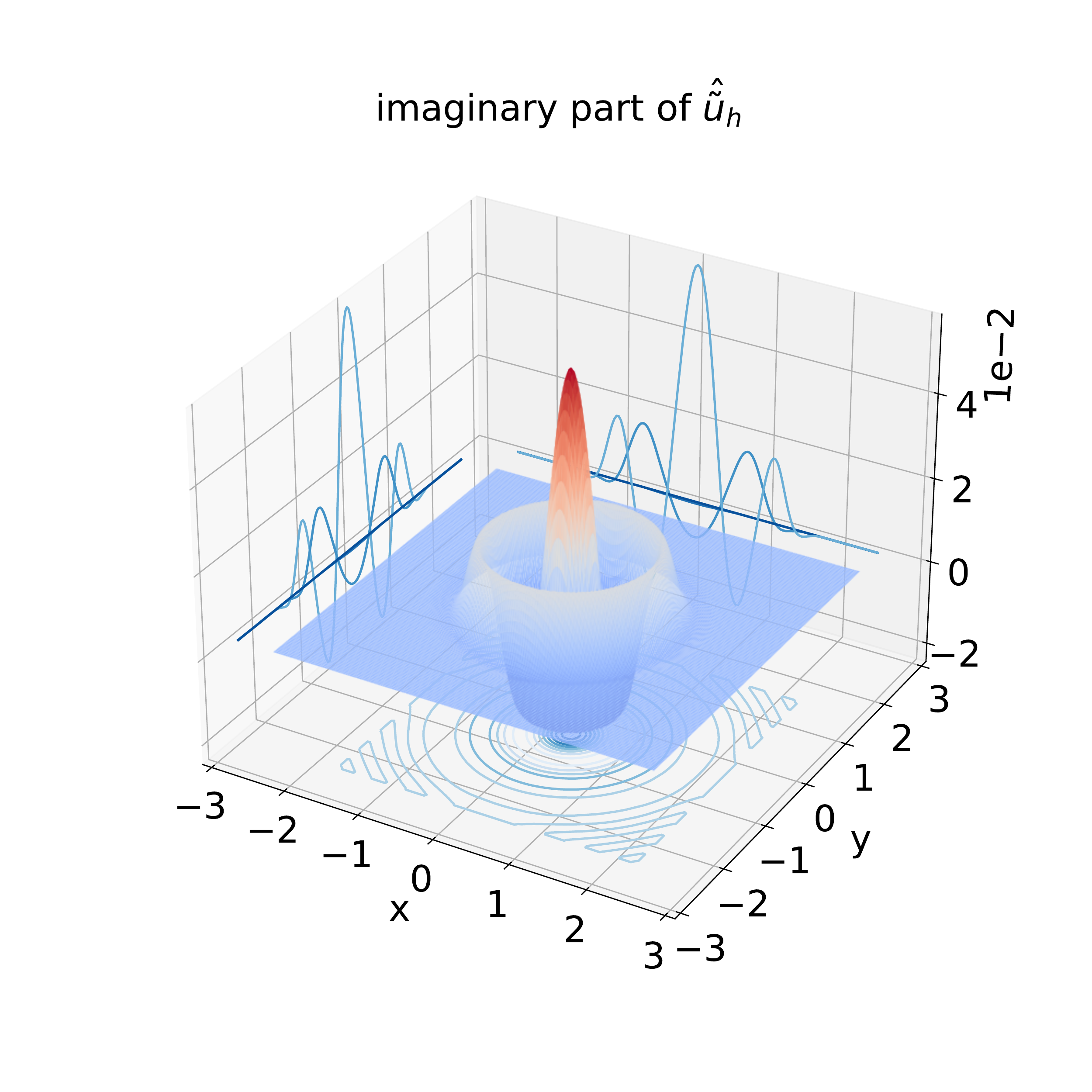}
 \caption{$k=2\pi,c_\gamma=0.1/k$ in polar coordinates}
 \label{fig:ex2p1solutionsc}
\end{subfigure}
\begin{subfigure}{.48\textwidth}
 \centering
	\includegraphics[width=0.49\textwidth]{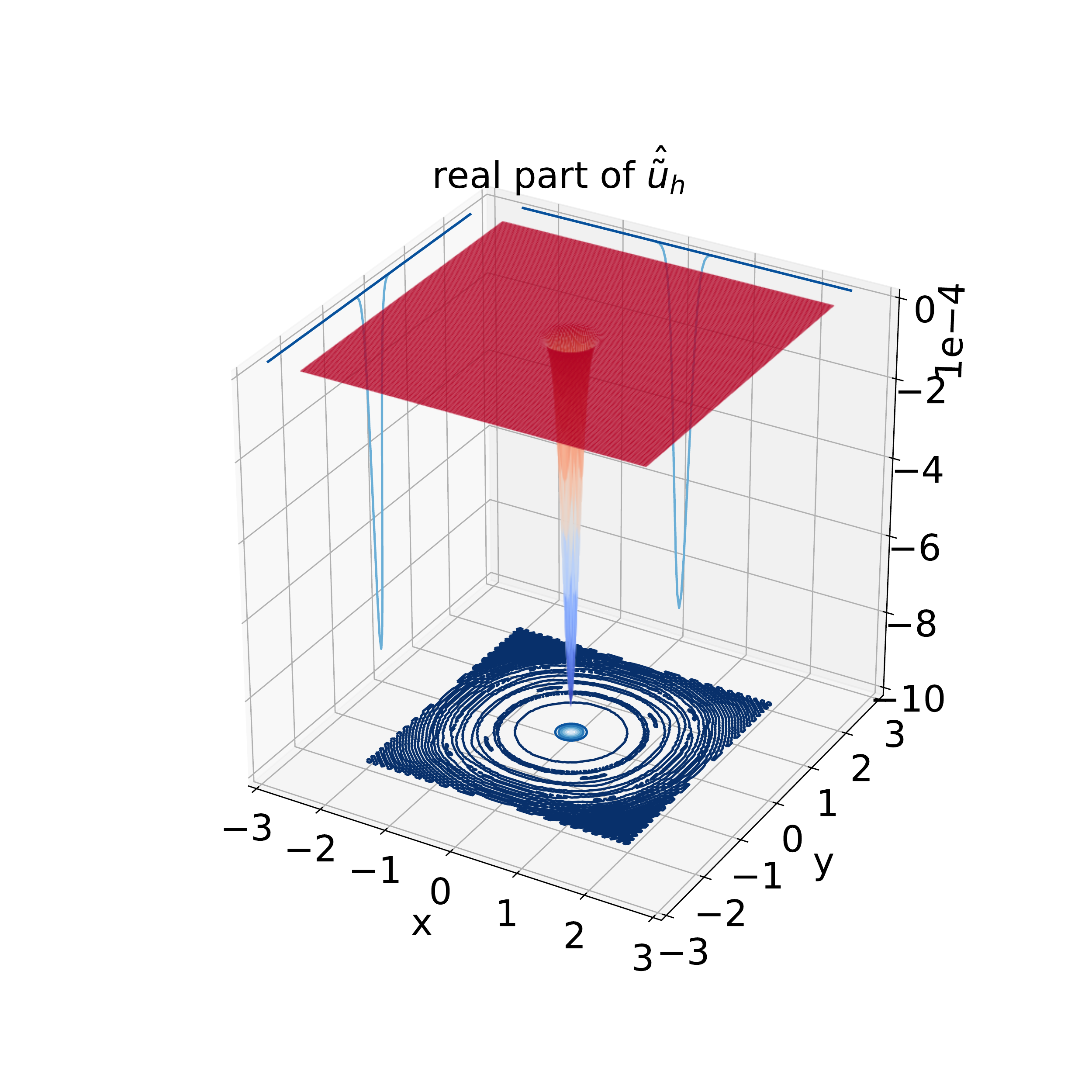}
	\includegraphics[width=0.49\textwidth]{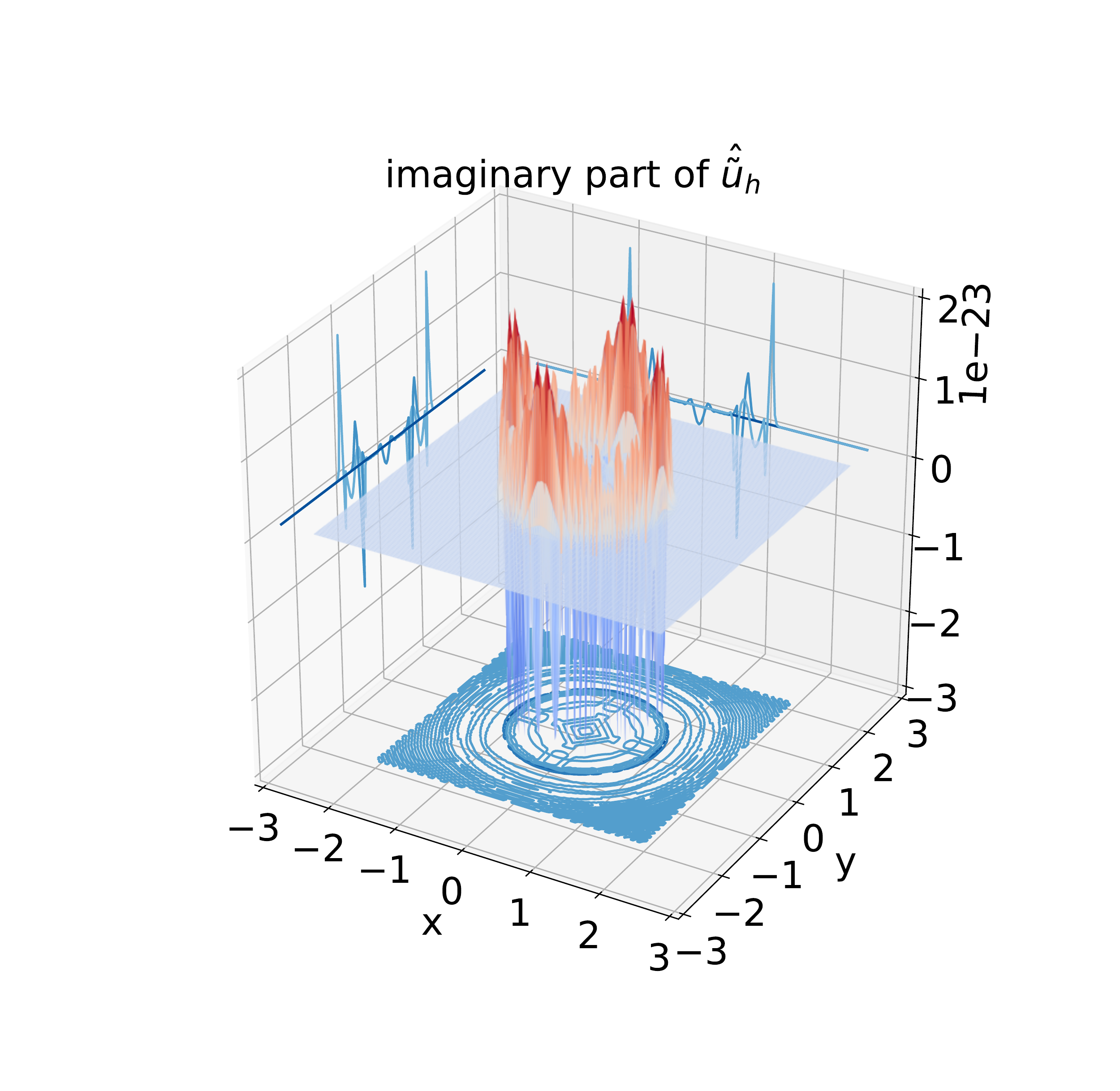}
 \caption{$k=20\pi,c_\gamma=1/k$ in polar coordinates}
 \label{fig:ex2p1solutionsc}
\end{subfigure}
	\caption{(Example 2.1) numerical solutions $\hat{\tilde u}_h$ for the exponential kernel by PMLs in Cartesian coordinates and polar coordinates.} \label{fig:ex2p1solutions}
\end{figure}

\begin{table}[htbp]
\centering
\begin{tabular}{|c|c|c|c|c||c|c|c|c|}
\hline
 & \multicolumn{4}{c||}{$k=2\pi,c_\gamma=0.1/k$}  & \multicolumn{4}{c|}{$k=20\pi,c_\gamma=1/k$}\\
 \hline
\diagbox{$h$}{PML} & CPML & Order & PPML & Order & CPML & Order & PPML & Order \\
\hline
$2^{-1}$ &  7.27e-02 & -- & 3.23e-02 & -- & 1.04e-05 & -- & 4.46e-06 & -- \\
\hline
$2^{-2}$ & 2.24e-02 & 1.70 & 1.34e-02 & 1.27 & 3.19e-06 & 1.70 & 1.49e-06  & 1.58\\
\hline
$2^{-3}$ & 2.60e-03 & 3.11 & 1.72e-03 & 2.96 & 6.25e-07 & 2.35 & 3.02e-07  & 2.30\\
\hline
$2^{-4}$ &  5.60e-04 & 2.21 & 4.72e-04 & 1.87 & 1.01e-07 & 2.63 & 4.96e-08 &2.61\\
\hline
\end{tabular}
\caption{(Example 2.1) $L^2$-errors for the exponential kernel by PMLs. CPML and PPML are short for PMLs in Cartesian coordinates and polar coordinates, respectively.} \label{tab:ex2p1errors}
\end{table}

\textbf{Example 2.2}: We here consider a piecewise constant kernel
\begin{equation}\label{kerc}
\gamma(x,y) = \frac{8}{\pi\delta^4}\chi_{[-\delta,\delta]}(|x-y|), \quad \quad \forall x,y\in\R^2.
\end{equation} 
To analytically continue this discontinuous kernel \eqref{kerc} into complex plane, we also approximate \eqref{kerc} by the smooth function \eqref{eq:estsmofunc}. In the simulations, we set the PML coefficient $z=40\i$, $l_r=l=d_{pml}=1$ and take the same source function as \eqref{eq:ex2p1source}.

Figure~\ref{fig:ex2p3solutions} plots the numerical solutions for $k=2\pi,\delta=0.5$ and $k=20\pi,\delta=0.5$ by two kinds of PMLs. Table~\ref{tab:ex2p3errors} shows the errors and convergence order for and $k$ by refining $h$. 

\begin{figure}[htbp]
\centering
\begin{subfigure}{.48\textwidth}
 \centering
	\includegraphics[width=0.49\textwidth]{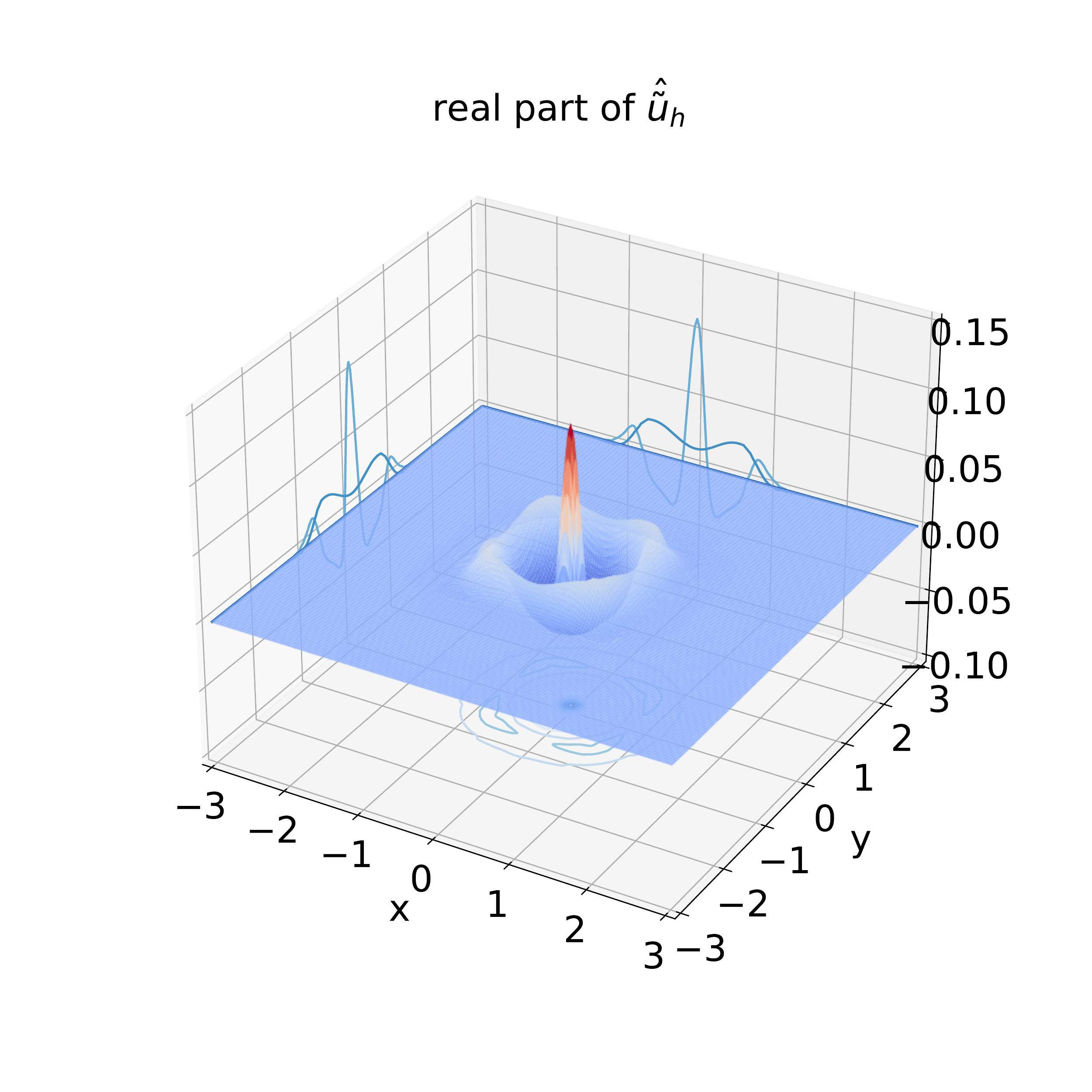}
	\includegraphics[width=0.49\textwidth]{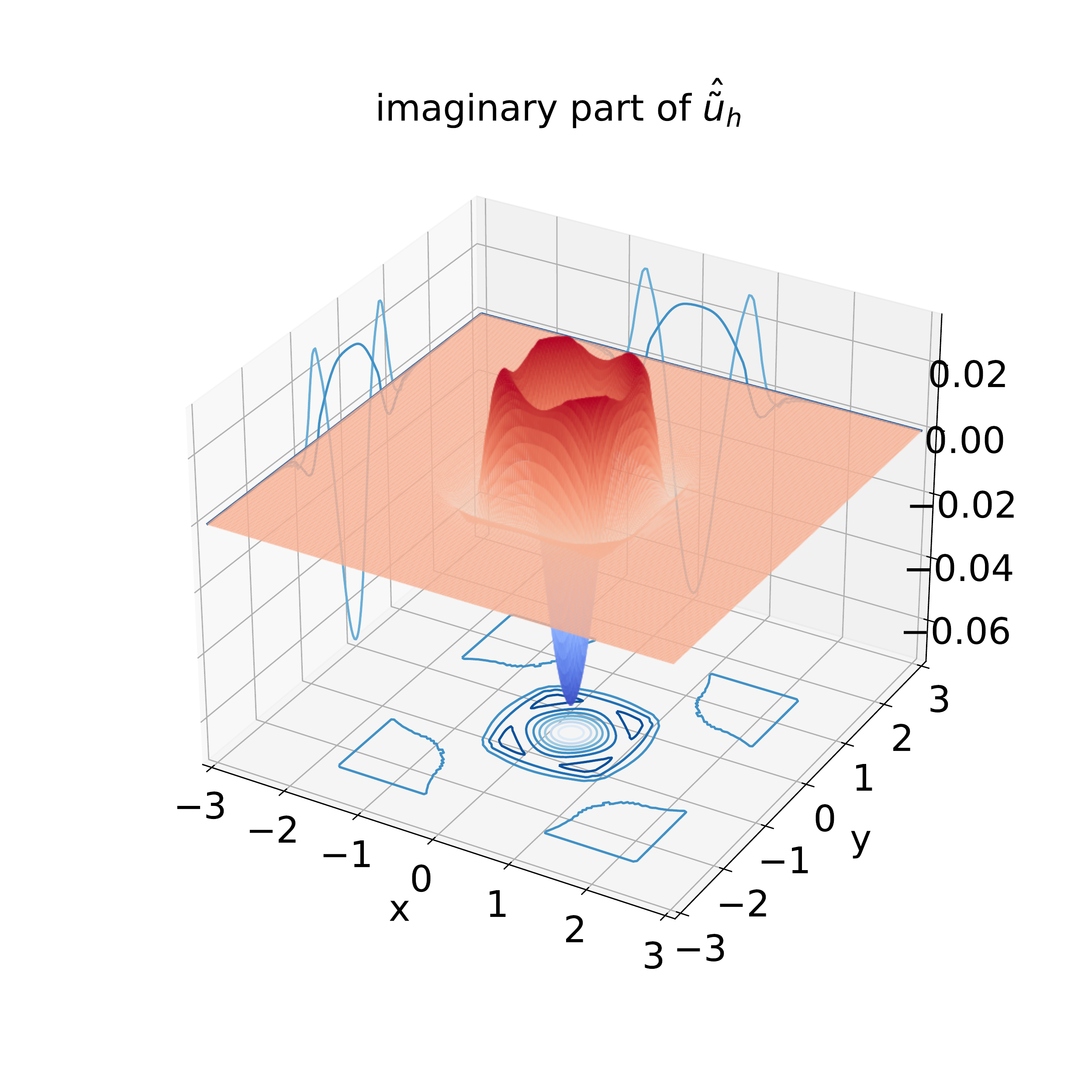}
 \caption{$k=2\pi,\delta=0.5$ by PMLs in Cartesian coordinates}
 \label{fig:ex2p2solutionsa}
\end{subfigure}
\begin{subfigure}{.48\textwidth}
 \centering
	\includegraphics[width=0.49\textwidth]{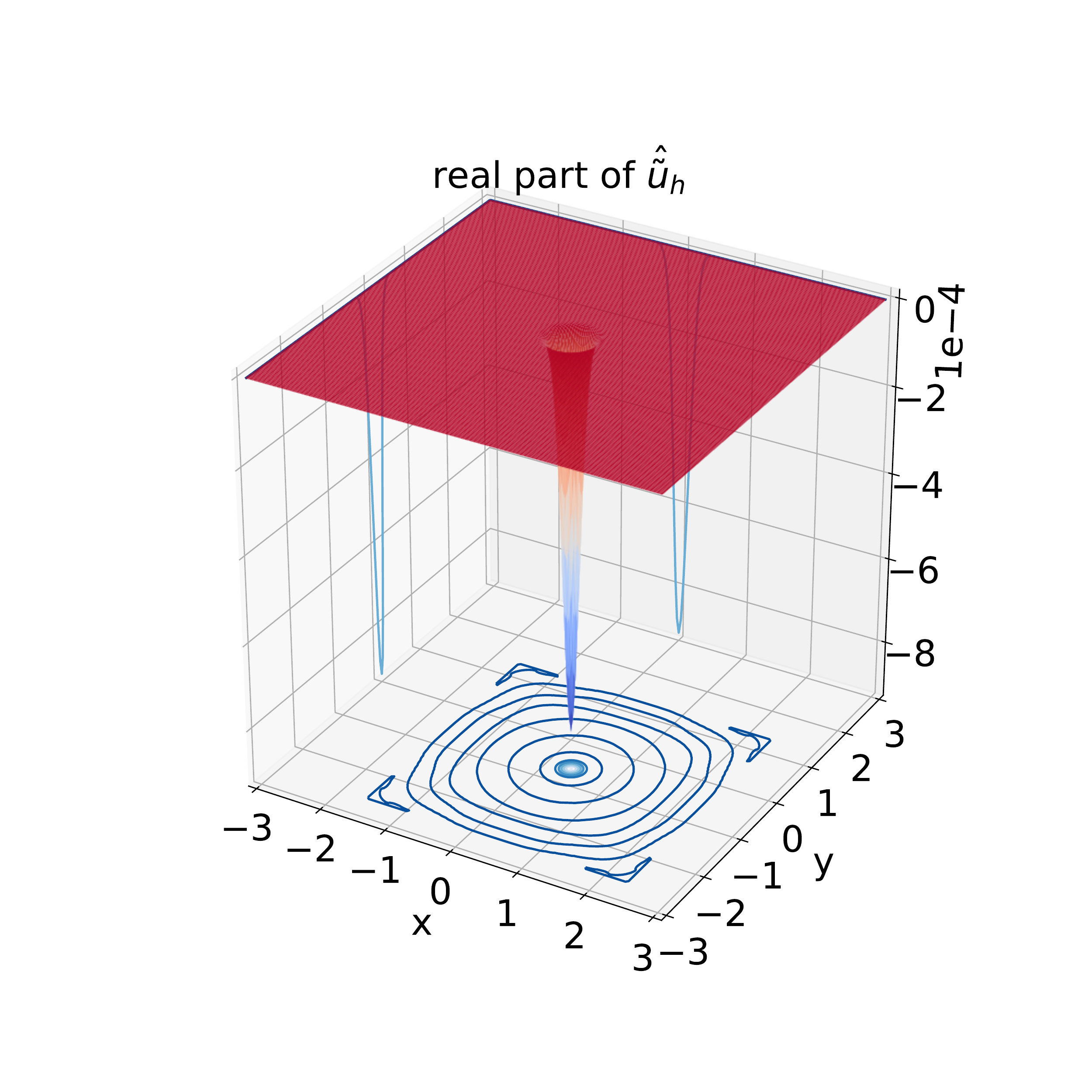}
	\includegraphics[width=0.49\textwidth]{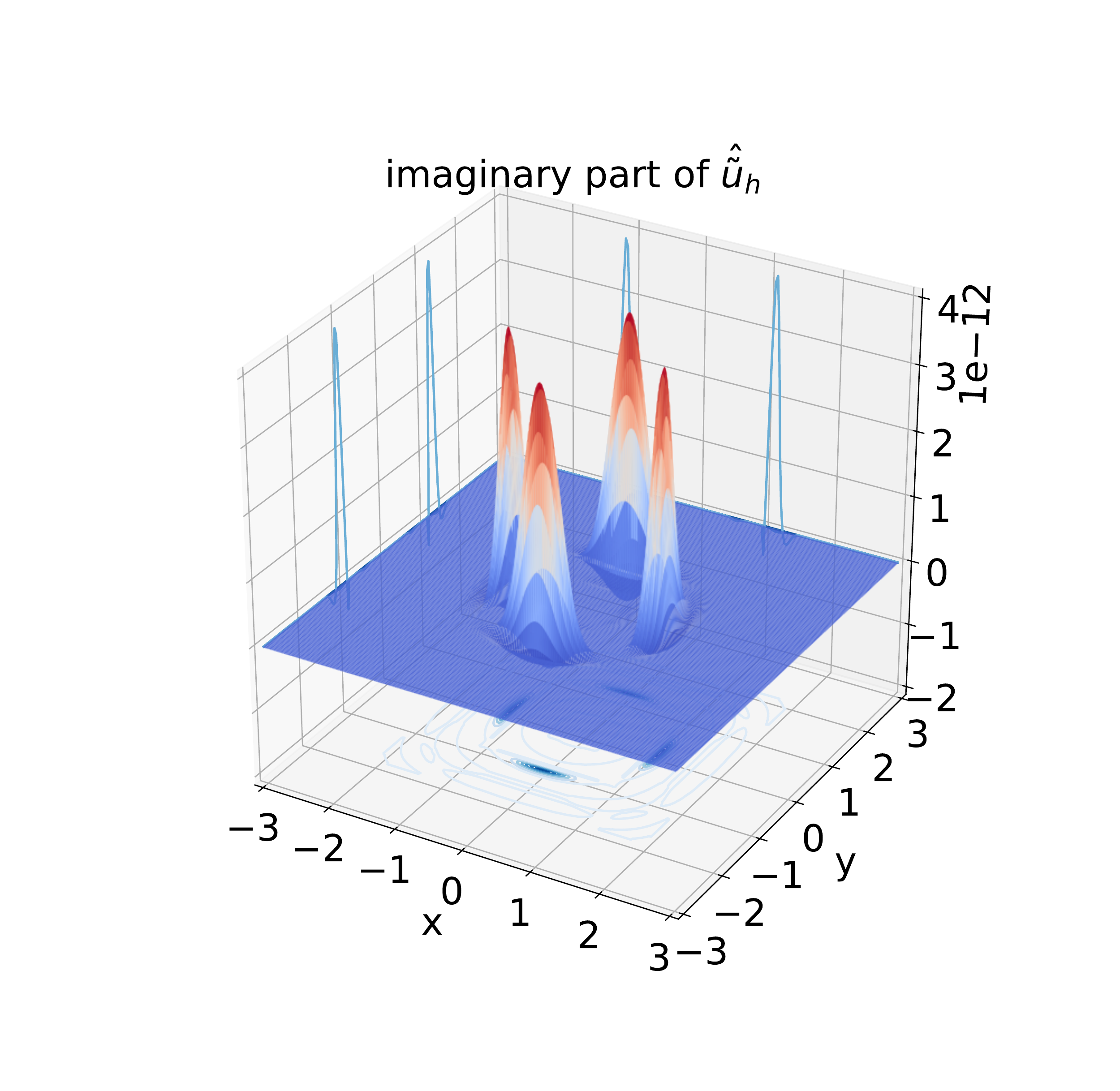}
 \caption{$k=20\pi,\delta=0.5$ by PMLs in Cartesian coordinates}
 \label{fig:ex2p2solutionsb}
\end{subfigure}
\begin{subfigure}{.48\textwidth}
 \centering
	\includegraphics[width=0.49\textwidth]{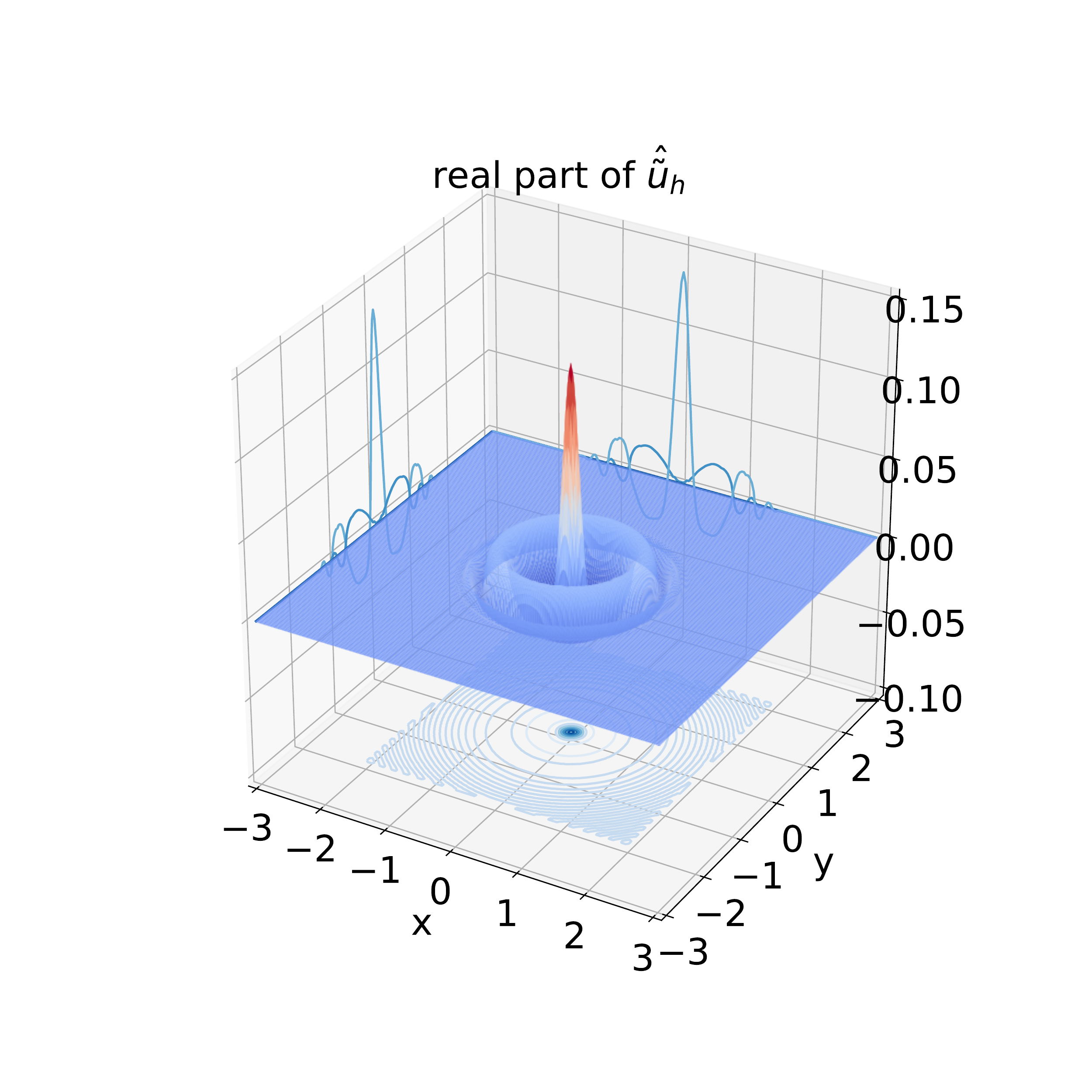}
	\includegraphics[width=0.49\textwidth]{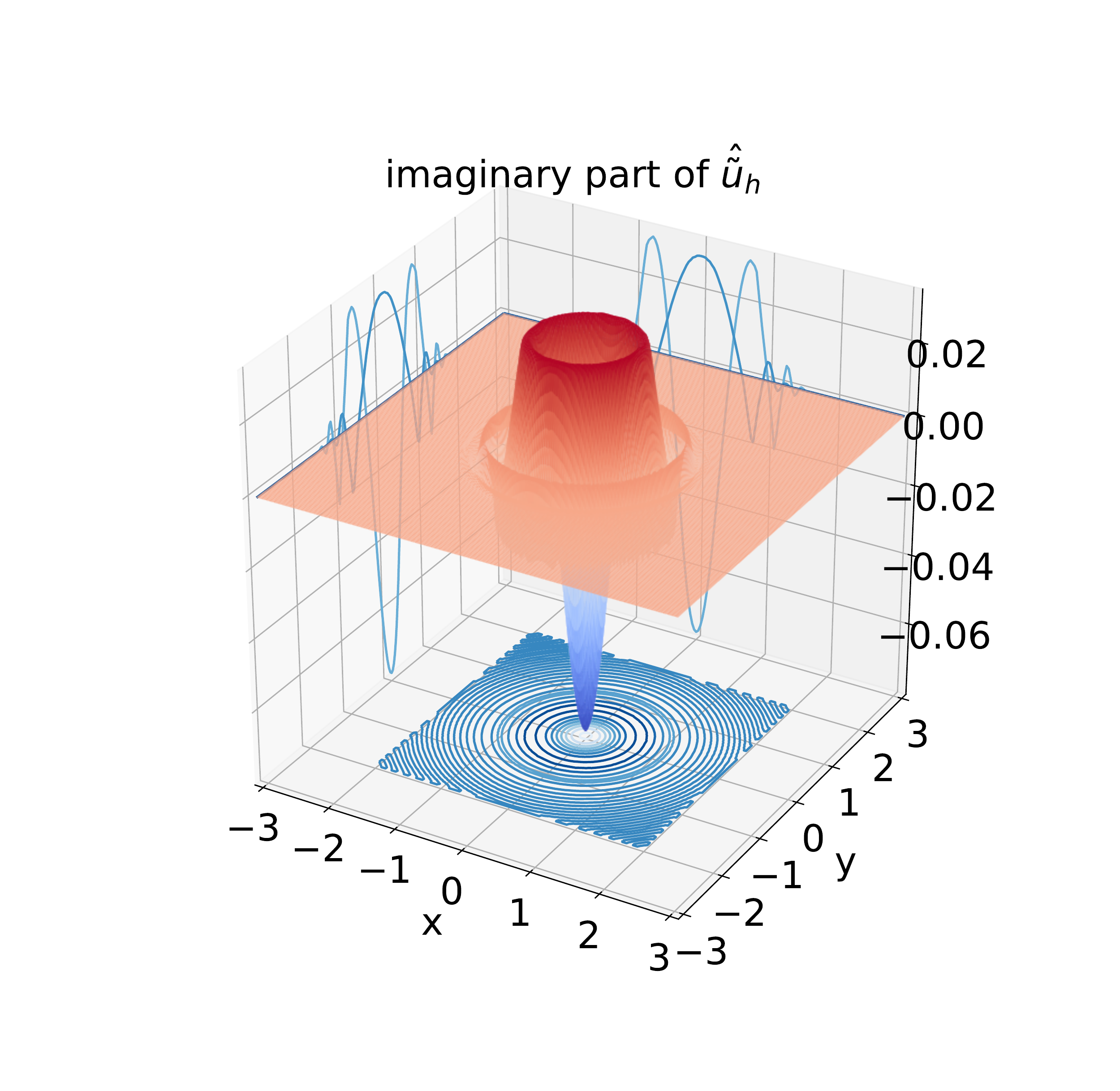}
 \caption{$k=2\pi,\delta=0.5$ by PMLs in polar coordinates}
 \label{fig:ex2p2solutionsc}
\end{subfigure}
\begin{subfigure}{.48\textwidth}
 \centering
	\includegraphics[width=0.49\textwidth]{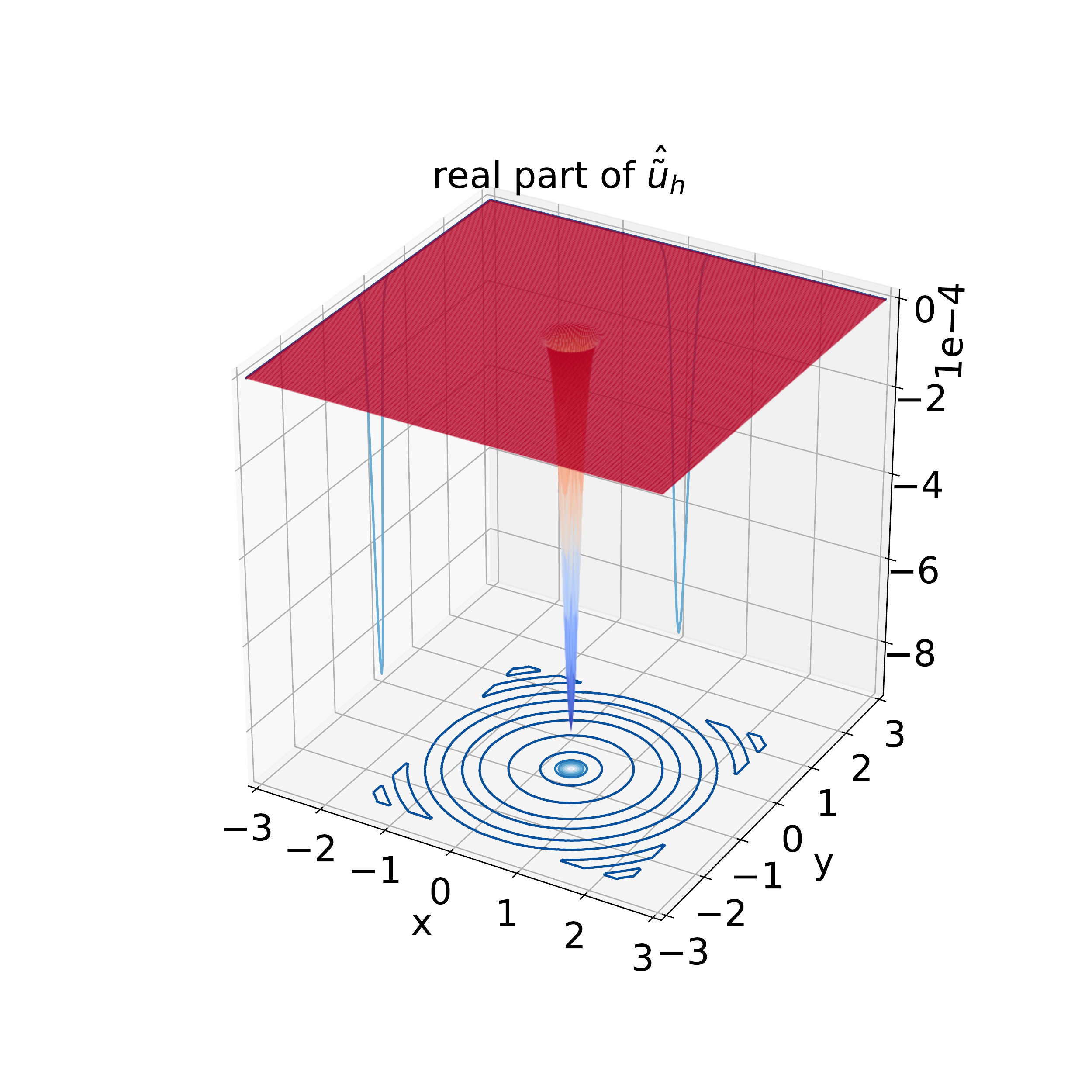}
	\includegraphics[width=0.49\textwidth]{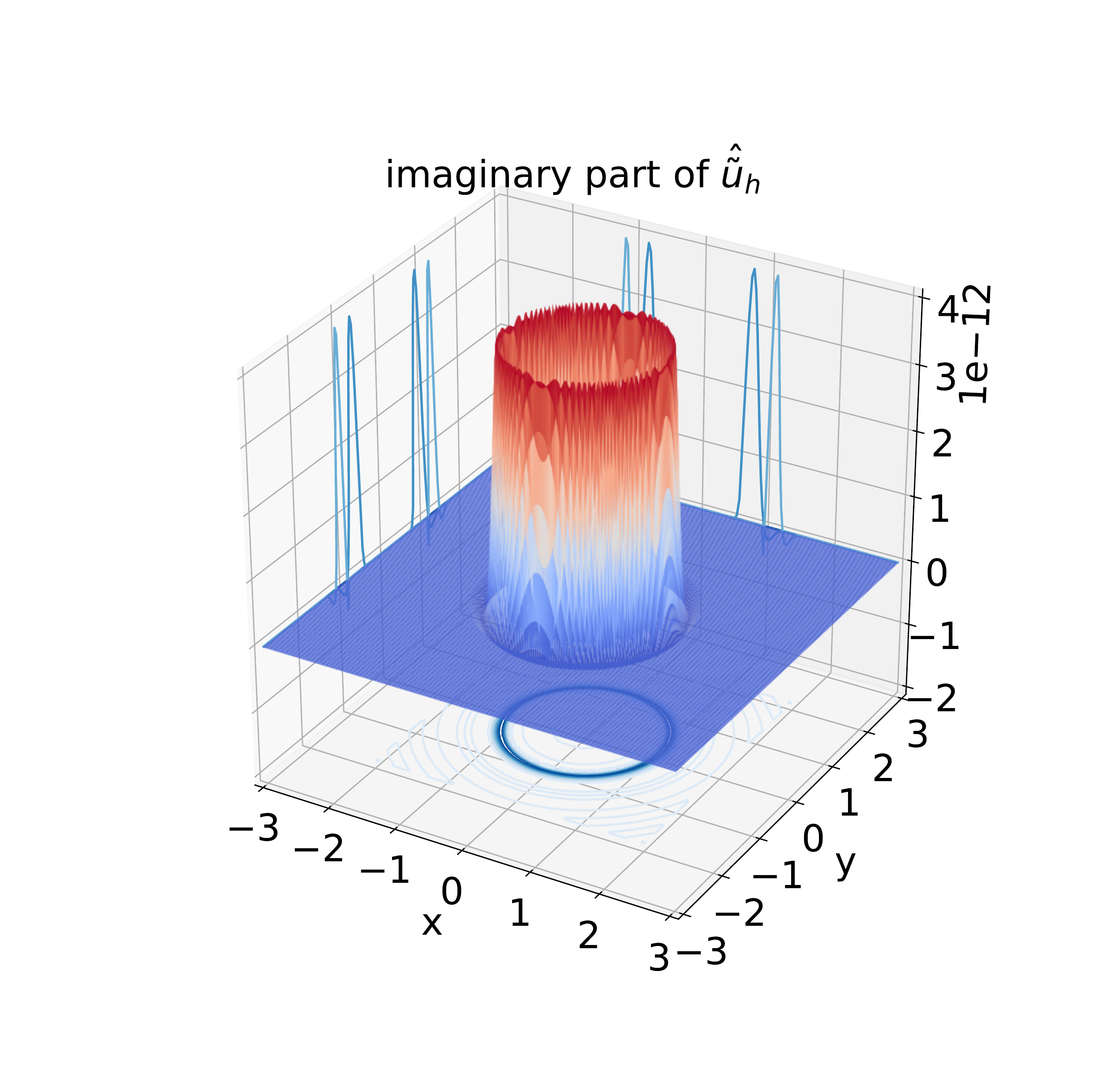}
 \caption{$k=20\pi,\delta=0.5$ by PMLs in polar coordinates}
 \label{fig:ex2p2solutionsd}
\end{subfigure}
	\caption{(Example 2.2) numerical solutions $\hat{\tilde u}_h$ for the piecewise constant kernel by PMLs in Cartesian coordinates and polar coordinates.} \label{fig:ex2p3solutions}
\end{figure}


\begin{table}[htbp]
\centering
\begin{tabular}{|c|c|c|c|c||c|c|c|c|}
\hline
 & \multicolumn{4}{c||}{$k=2\pi,\delta=0.5$}  & \multicolumn{4}{c|}{$k=20\pi,\delta=0.5$}\\
 \hline
\diagbox{$h$}{PML} & CPML & Order & PPML & Order & CPML & Order & PPML & Order \\
\hline
$2^{-1}$ &  1.67e-01 & -- & 7.54e-02 & -- &2.37e-06 & -- &1.02e-06 & -- \\
\hline
$2^{-2}$ & 1.78e-02 & 3.23 &  9.82e-03 & 2.94 &2.11e-07  & 3.49 &9.82e-08  & 3.37\\
\hline
$2^{-3}$ & 4.16e-03 &   2.10 & 1.42e-03 & 2.79 & 3.67e-08 & 2.52&1.78e-08 &2.47\\
\hline
$2^{-4}$ &  1.09e-03 & 1.92 & 3.26e-04 & 2.12 &7.13e-09 & 2.36 &3.51e-09 &2.34\\
\hline
\end{tabular}
\caption{(Example 2.2) $L^2$-errors of solutions for the piecewise constant kernel for PMLs. CPML and PPML are short for PMLs in Cartesian coordinates and polar coordinates, respectively.} \label{tab:ex2p3errors}
\end{table}

\textbf{Example 2.3}: We finally consider the fractional Helmholtz equation in 2D
\begin{align}
(-\Delta)^s u(x) -k^2 u(x) = f(x),\quad x\in \R^2,\ (0<s<1)
\end{align}
where
\begin{align*}
(-\Delta)^s u(x) = C(2,s)\ \mathrm{p.v.}\ \int_{\R^2} \frac{u(x)-u(y)}{|x-y|^{2+2s}}\dy,\quad C(2,s)=\frac{2^{2s}s\Gamma(s+1)}{\pi\Gamma(1-s)}.
\end{align*}
We set $z=10\i$, $l_r=l=d_{pml}=10$ and take the source function 
$$f(x) = \frac{\sqrt{\pi}}{5} e^{-\frac{\pi^2}{25}|x|^2}.$$

Figure~\ref{fig:ex2p4solutions} shows the corresponding solutions for $s=1/2$ and $k=2\pi/10, 32\pi/10$, and Table~\ref{tab:ex2p4errors} shows the errors and convergence orders by refining $h$. The behaviors of solutions are quite similar to those in the above examples.

\begin{figure}[htbp]
\centering
\begin{subfigure}{.48\textwidth}
 \centering
	\includegraphics[width=0.49\textwidth]{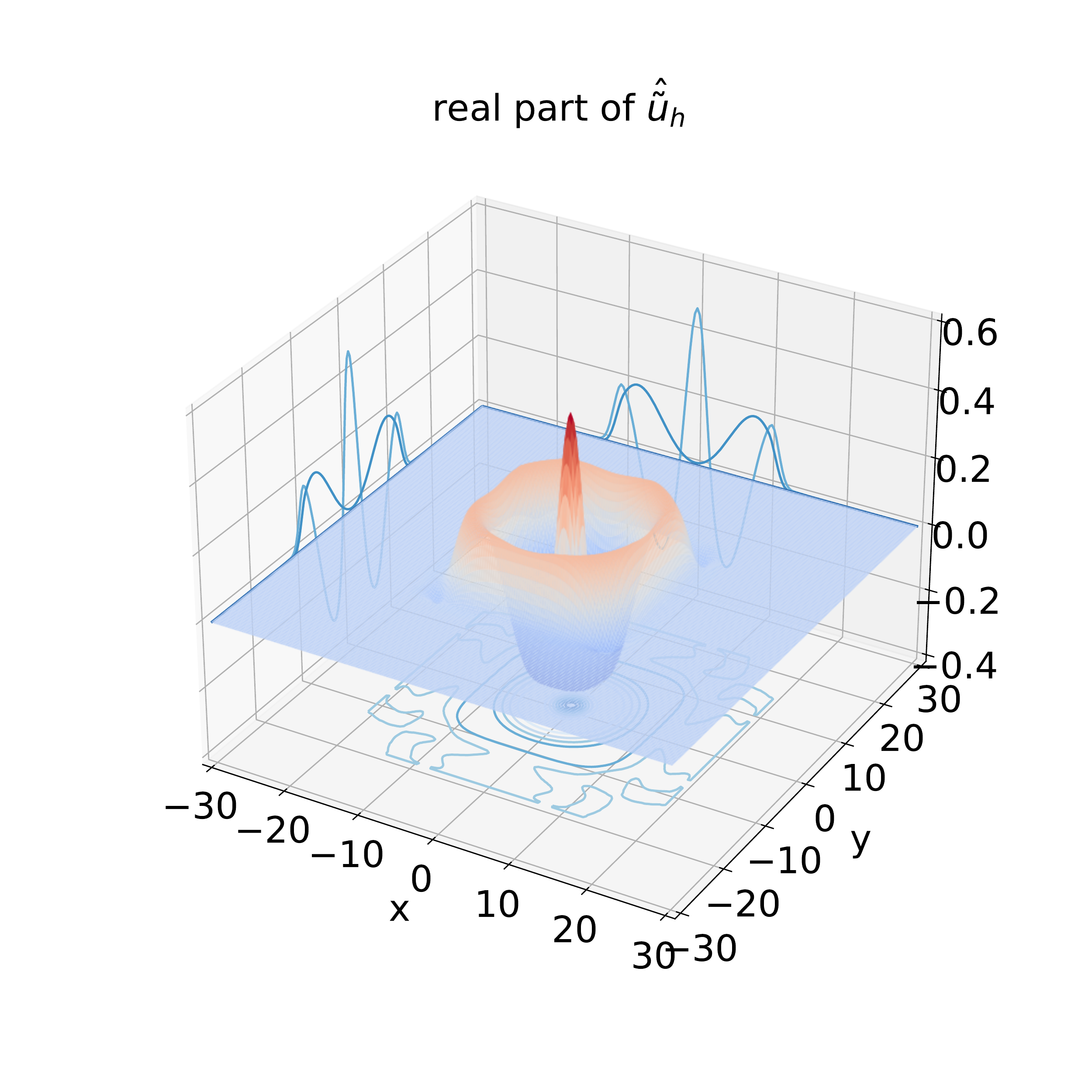}
	\includegraphics[width=0.49\textwidth]{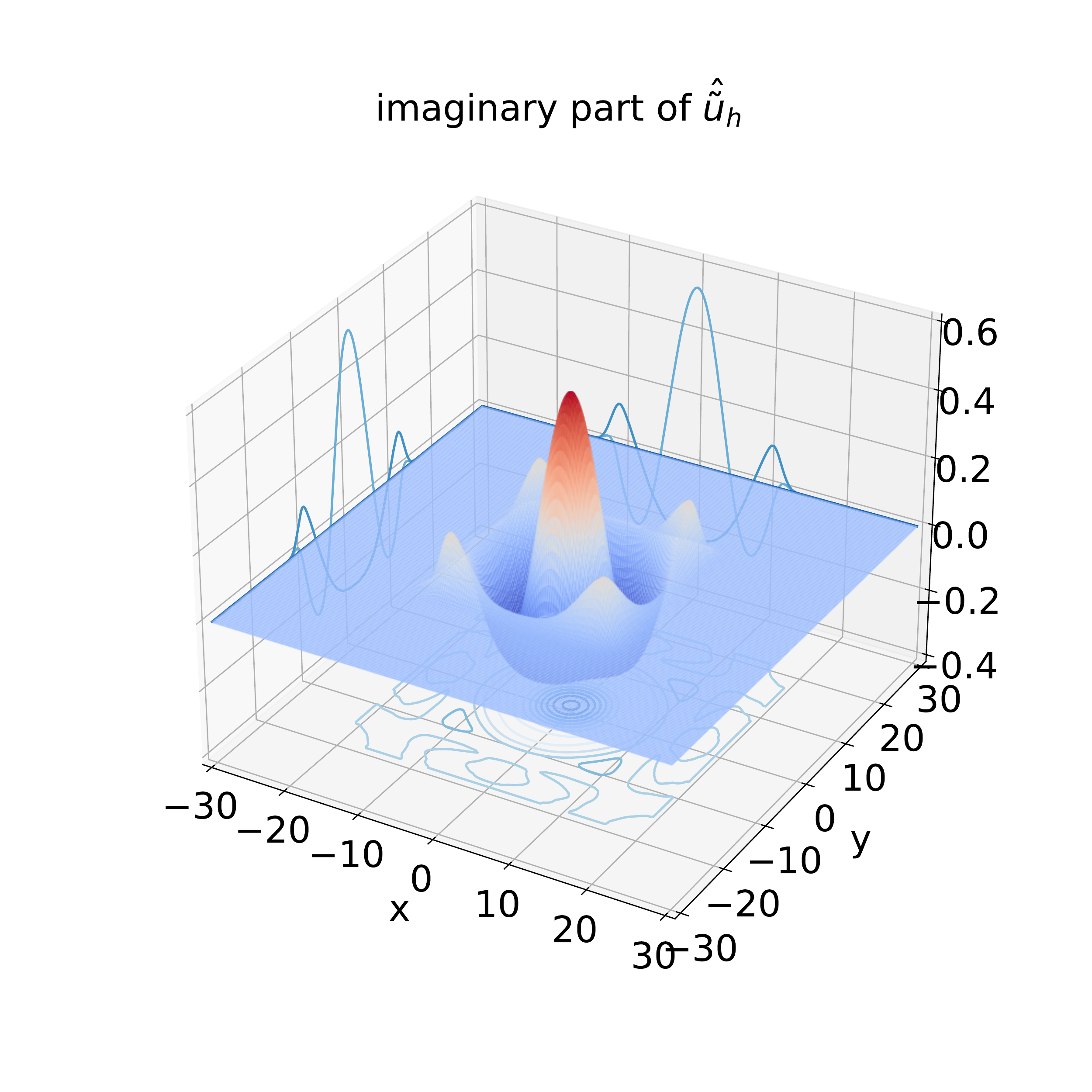}
 \caption{$k=2\pi/10,s=1/2$ in Cartesian coordinates}
 \label{fig:ex2p2solutionsa}
\end{subfigure}
\begin{subfigure}{.48\textwidth}
 \centering
	\includegraphics[width=0.49\textwidth]{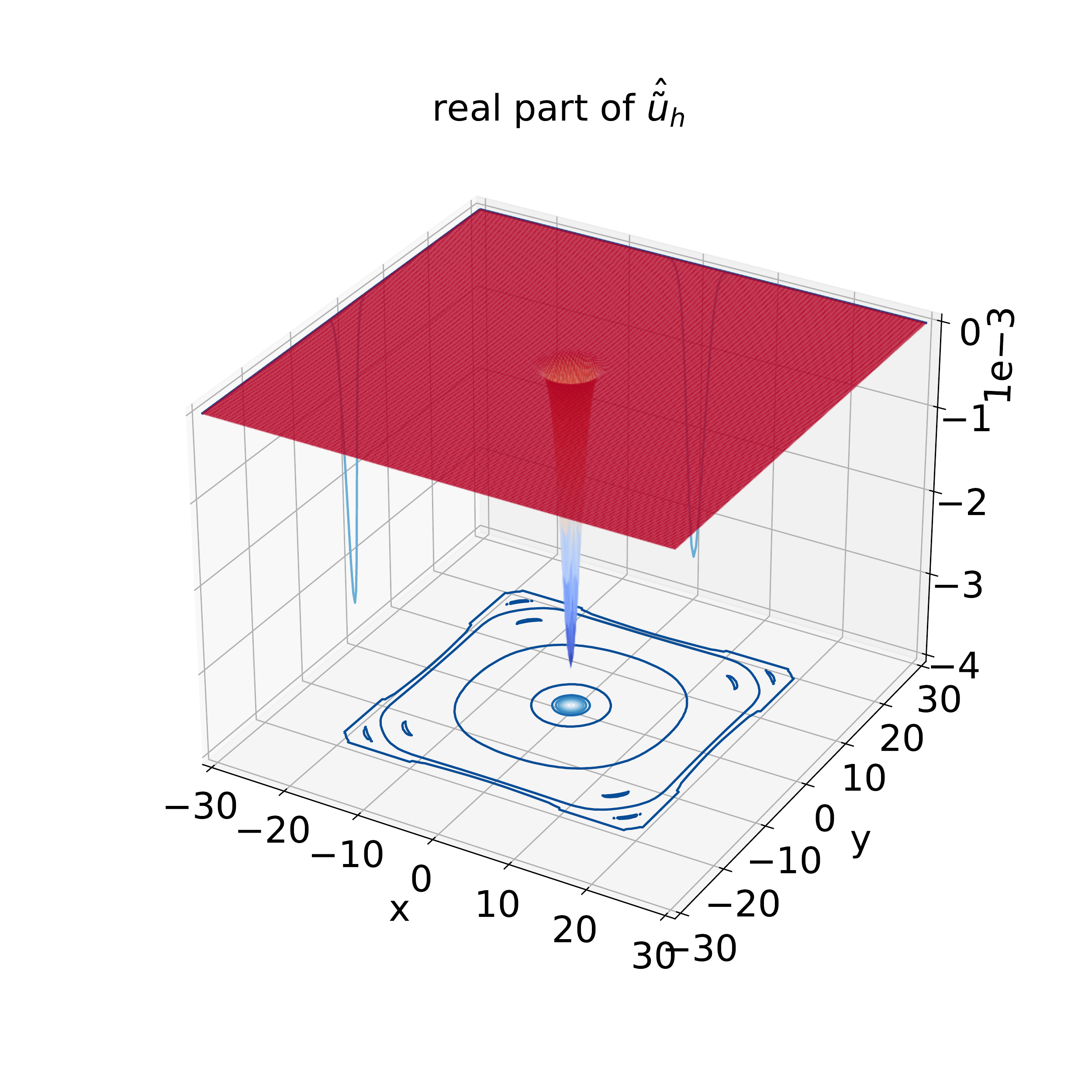}
	\includegraphics[width=0.49\textwidth]{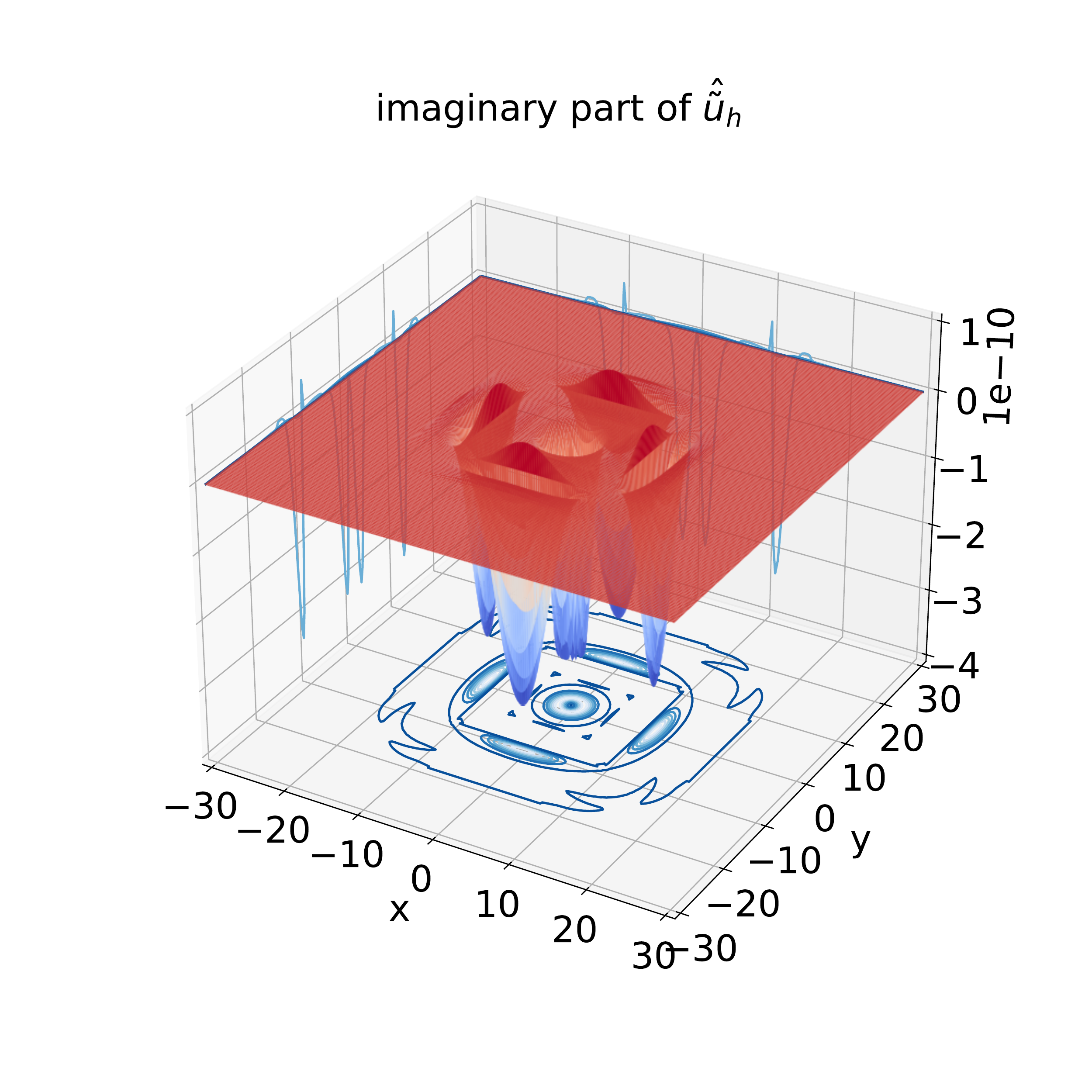}
 \caption{$k=32\pi/10,s=1/2$ in Cartesian coordinates}
 \label{fig:ex2p2solutionsb}
\end{subfigure}
\begin{subfigure}{.48\textwidth}
 \centering
	\includegraphics[width=0.49\textwidth]{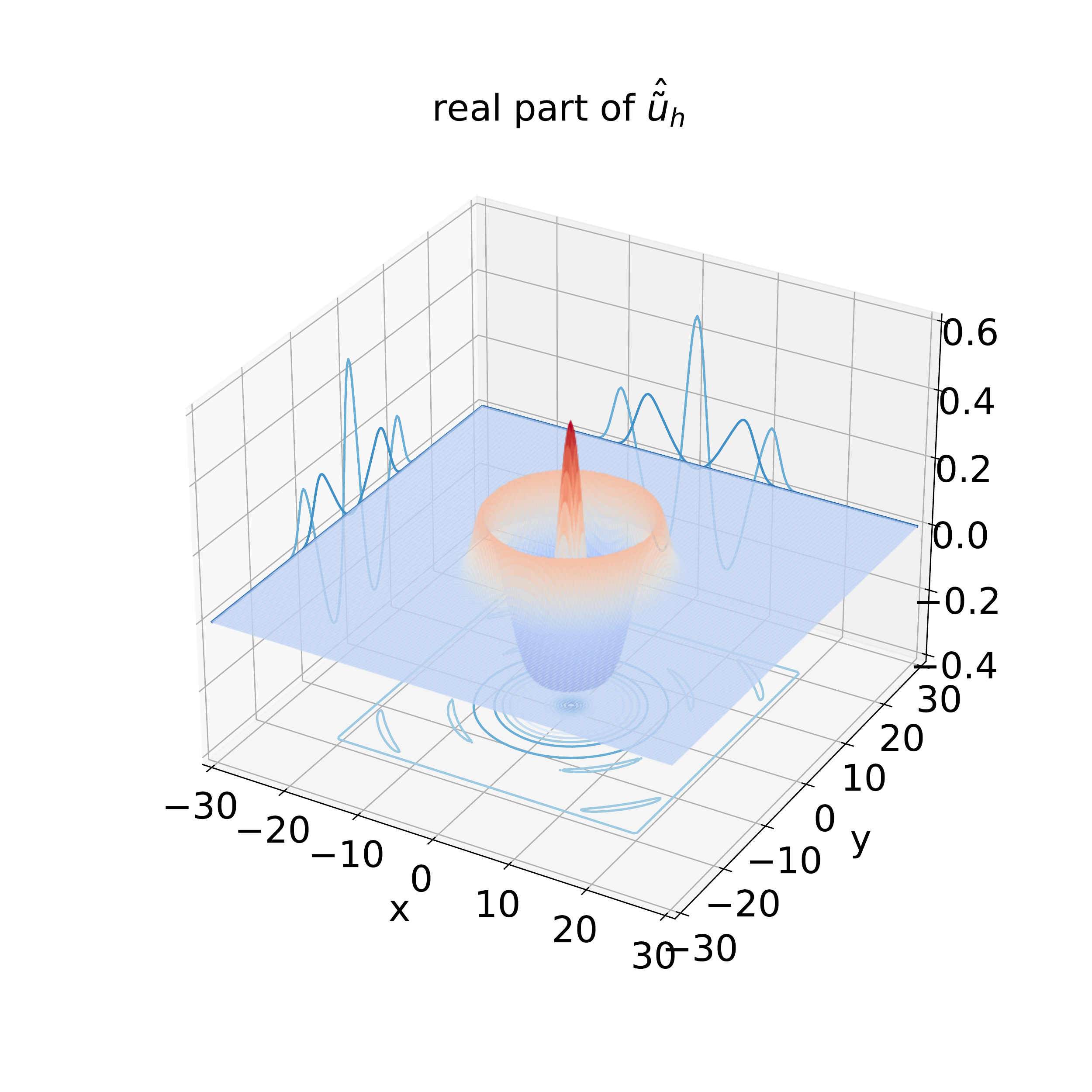}
	\includegraphics[width=0.49\textwidth]{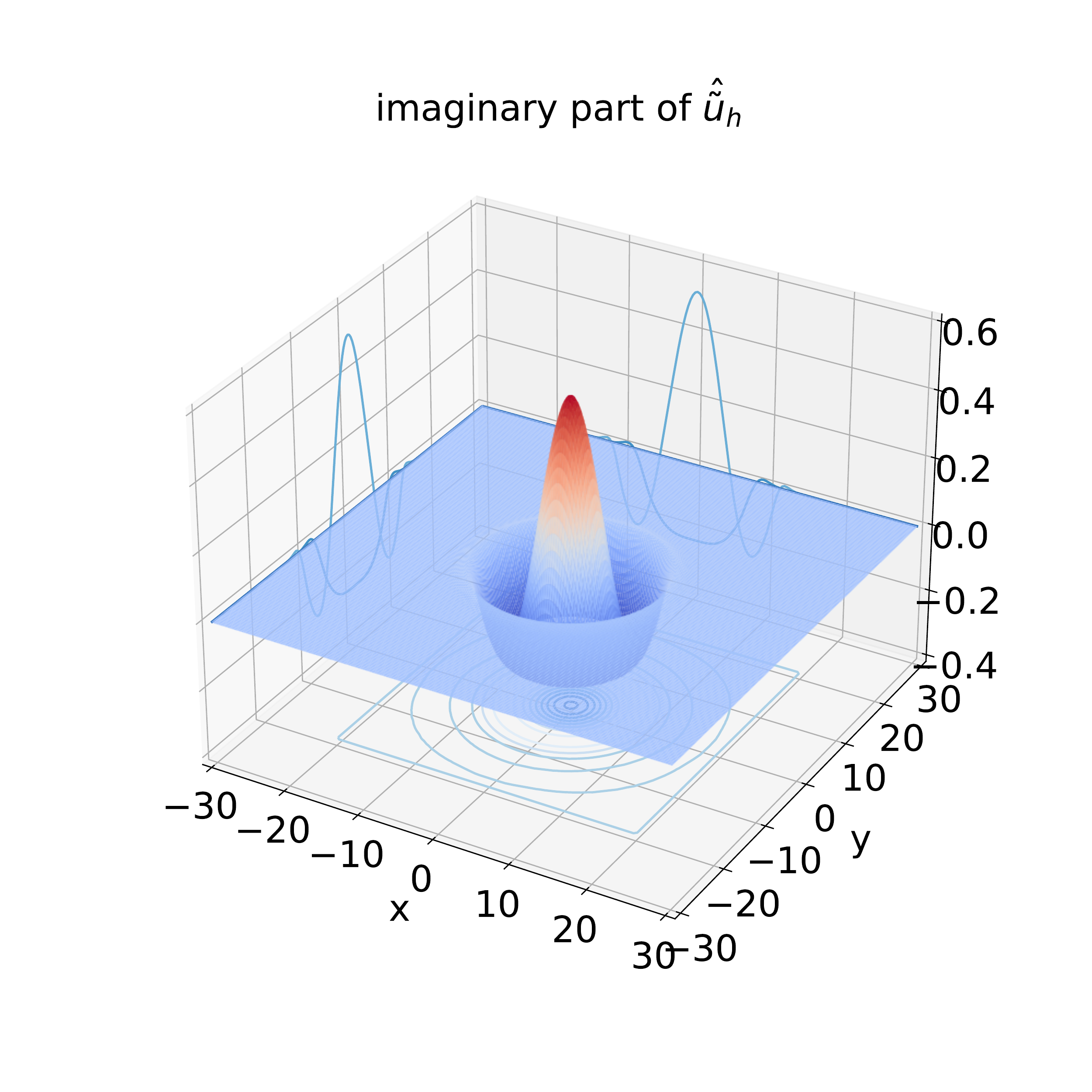}
 \caption{$k=2\pi/10,s=1/2$ in polar coordinates}
 \label{fig:ex2p2solutionsc}
\end{subfigure}
\begin{subfigure}{.48\textwidth}
 \centering
	\includegraphics[width=0.49\textwidth]{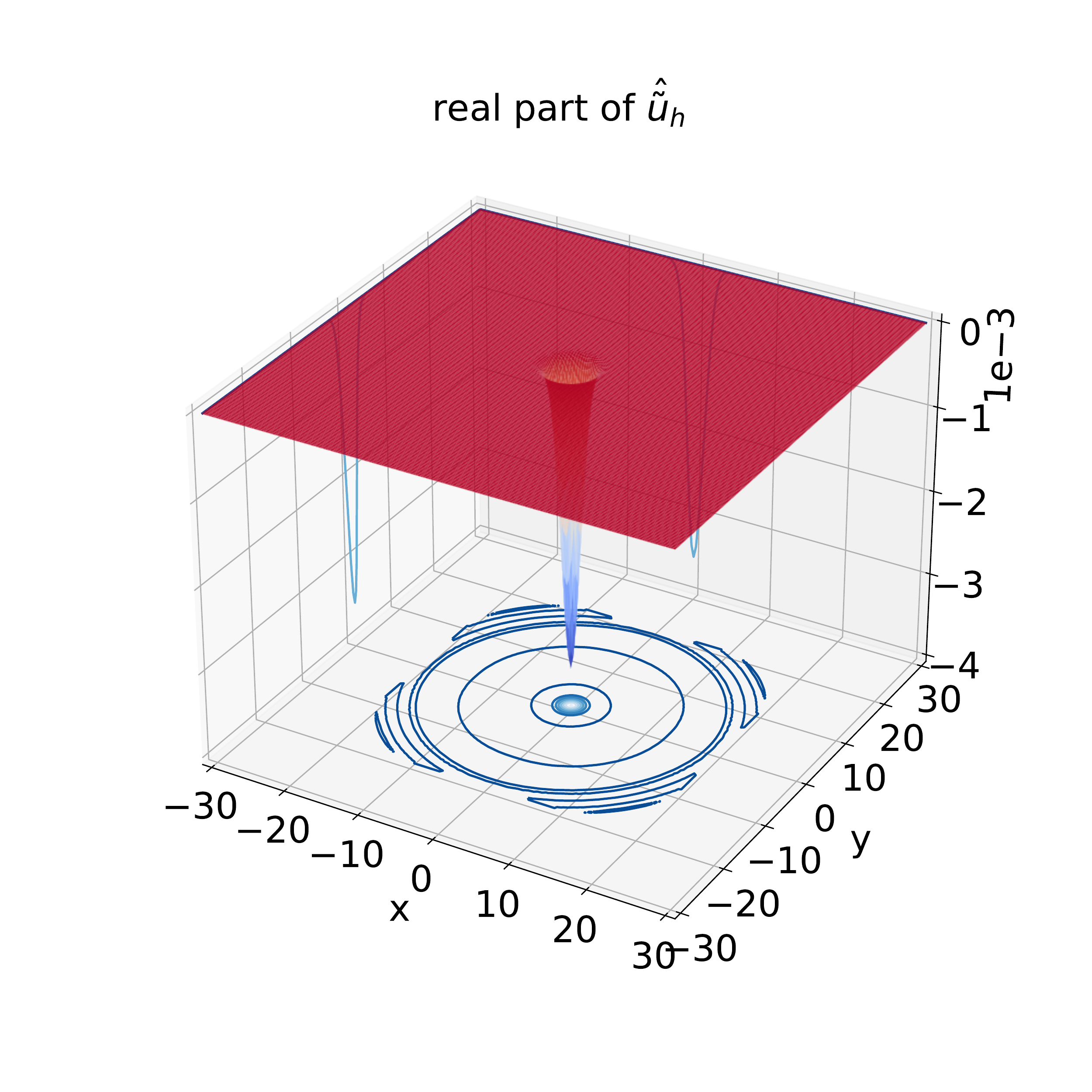}
	\includegraphics[width=0.49\textwidth]{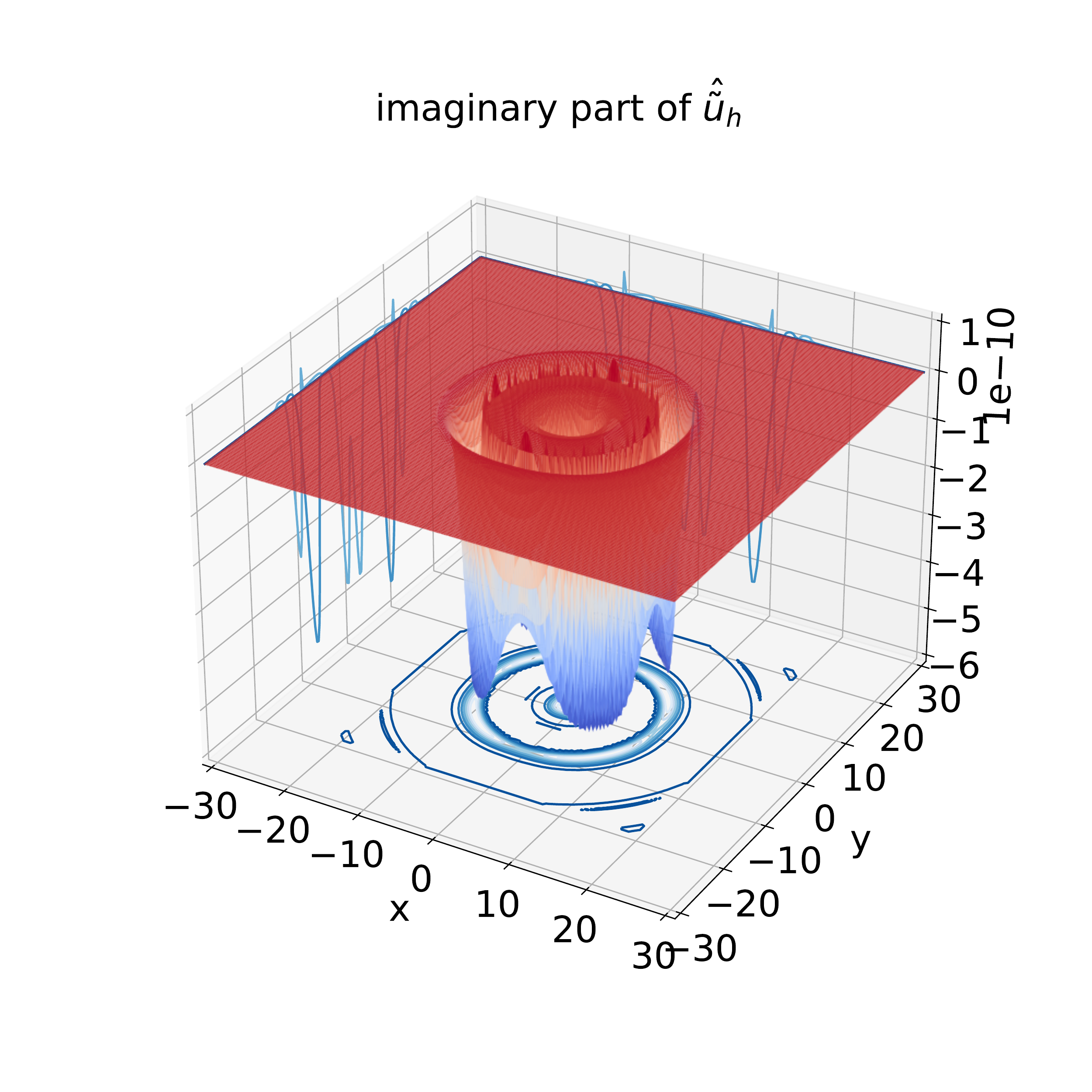}
 \caption{$k=32\pi/10,s=1/2$ in polar coordinates}
 \label{fig:ex2p2solutionsd}
\end{subfigure}
	\caption{(Example 2.3) numerical solutions $\hat{\tilde u}_h$ of fractional Helmholtz equations by PMLs in Cartesian coordinates and polar coordinates.} \label{fig:ex2p4solutions}
\end{figure}


\begin{table}[htbp]
\centering
\begin{tabular}{|c|c|c|c|c||c|c|c|c|}
\hline
 & \multicolumn{4}{c||}{$k=2\pi/10,s=1/2$}  & \multicolumn{4}{c|}{$k=32\pi/10,s=1/2$}\\
 \hline
\diagbox{$h(*10)$}{PML} & CPML & Order & PPML & Order & CPML & Order & PPML & Order \\
\hline
$2^{-1}$ &   2.63 & -- & 1.24 & -- & 8.71e-06 & -- & 3.73e-06 &--\\
\hline
$2^{-2}$ &  3.84e-01 & 2.77 & 1.04e-01 & 3.58 & 2.08e-06  & 2.06 & 9.71e-07  & 1.94\\
\hline
$2^{-3}$ & 6.48e-02 & 2.56 & 2.45e-02 & 2.08 & 3.22e-07& 2.69 & 1.56e-07 & 2.63\\
\hline
$2^{-4}$ &   1.57e-02 & 2.03 & 6.89e-03 & 1.83 & 5.82e-08 & 2.46 & 2.86e-08 & 2.44\\
\hline
\end{tabular}
\caption{(Example 2.3) $L^2$-errors for solutions of fractional Helmholtz equations by PMLs. CPML and PPML are short for PMLs in Cartesian coordinates and polar coordinates, respectively.} \label{tab:ex2p4errors}
\end{table}

\section{Conclusion}
The PML for the nonlocal Helmholtz equation on unbounded domains is studied. A 1D PML was introduced which has a more flexible PML coefficient comparing with our previous work \cite{DuZhangNonlocal1}. This PML can be directly used to solve the nonlocal equation for all \emph{wavenumber} $k$, whose efficiency was presented in the theoretical results and numerical tests. Meanwhile, we constructed the general form of the 2D nonlocal PML by introducing two common PMLs: one is obtained to stretch Cartesian coordinates and the other is to stretch the radial coordinate in polar coordinates. We also analyzed the behaviors of nonlocal solutions for radial kernel in 1D and showed PML absorbs incoming waves. Finally, AC discretization schemes were used to numerically solve the truncated PML problems.

There are a number of interesting issues to be studied further. First, the far field boundary condition for the nonlocal Helmholtz equation is fundamental but remains open. Second, the theoretical analysis of nonlocal solutions for general kernels in 1D and 2D are still to be studied. Third, numerical tests show that the numerical solution suffers the pollution effects and numerical reflections, which needs expensive computational cost for the huge linear algebraic system resulting from its numerical discretization. Therefore, more investigations on fast solvers for the discrete scheme are necessary and interesting. 
In addition, using the idea in this paper to construct ABCs for nonlocal wave equations will be our future work.

\bibliographystyle{siam}
\bibliography{referrence}

\end{document}